\renewcommand{\epsilon}{\varepsilon}
\newcommand{\pnorm}[2][]{\if #1'' \left|#2\right|_p \else \left|#2\right|_{#1} \fi}
\DeclareMathOperator*{\loc}{loc}
\newcommand*\diff{\mathop{}\!\mathrm{d}}
\begin{document}

\title[On equations involving
the fractional $p(\cdot)$-Laplacian] {A-priori bounds and
multiplicity of solutions for nonlinear elliptic problems involving
the fractional $p(\cdot)$-Laplacian}

\author[K. Ho]{Ky Ho}\address{Ky Ho\newline Institute of Fundamental and Applied Sciences, Duy Tan University, Ho Chi Minh City 700000, Vietnam}
\email{hnky81@gmail.com}

\author[Y.-H. Kim]{Yun-Ho Kim*}
\address{Yun-Ho Kim\\
Department of Mathematics Education\\
Sangmyung University\\
Seoul 110-743, Republic of Korea}
\email{kyh1213@smu.ac.kr}

\thanks{* Corresponding author.}

\date{}
\subjclass[2010]{35B45, 35D30, 35J20, 35J60, 35J92, 46E35.}

\keywords{The fractional $p$-Laplacian; the $p(\cdot)$-Laplacian; fractional Sobolev
spaces with variable exponent; a-priori bounds; De Giorgi iteration; variational methods.}

\begin{abstract}
We obtain fundamental imbeddings for the fractional Sobolev space with variable exponent that is a generalization of well-known fractional Sobolev spaces. As an application, we obtain a-priori bounds and multiplicity of solutions to some nonlinear elliptic problems involving 
the fractional $p(\cdot)$-Laplacian.

\end{abstract}

\maketitle \numberwithin{equation}{section}
\newtheorem{theorem}{Theorem}[section]
\newtheorem{lemma}[theorem]{Lemma}
\newtheorem{definition}[theorem]{Definition}
\newtheorem{claim}[theorem]{Claim}
\newtheorem{proposition}[theorem]{Proposition}
\newtheorem{remark}[theorem]{Remark}
\newtheorem{corollary}[theorem]{Corollary}
\newtheorem{example}[theorem]{Example}
\allowdisplaybreaks

\newcommand{\abs}[1]{\left\lvert#1\right\rvert}
\newcommand{\norm}[1]{|\!|#1|\!|}
\newcommand{\Norm}[1]{\mathinner{\Big|\!\Big|#1\Big|\!\Big|}}
\newcommand{\curly}[1]{\left\{#1\right\}}
\newcommand{\Curly}[1]{\mathinner{\mathopen\{#1\mathclose\}}}
\newcommand{\round}[1]{\left(#1\right)}
\newcommand{\bracket}[1]{\left[#1\right]}
\newcommand{\scal}[1]{\left\langle#1\right\rangle}
\newcommand{\Div}{\text{\upshape div}}
\newcommand{\dotsr}{\dotsm}
\newcommand{\R}{{\mathbb R}}
\newcommand{\la}{{\lambda}}
\newcommand{\Zero}{{\mathbf0}}
\newcommand{\ra}{\rightarrow}
\newcommand{\ran}{\rangle}
\newcommand{\lan}{\langle}
\newcommand{\ol}{\overline}
\newcommand{\N}{{\mathbb N}}
\newcommand{\e}{{\varepsilon}}
\newcommand{\al}{{\alpha}}


\newcommand{\Assg}[1]{\textup{(g)}}
\newcommand{\AssF}[1]{\textup{(F#1)}}
\newcommand{\AssHJ}[1]{\textup{(HA#1)}}
\newcommand{\AssJ}[1]{\textup{(J#1)}}
\newcommand{\AssG}[1]{\textup{(Q#1)}}
\newcommand{\Assf}[1]{\textup{($f$#1)}}
\newcommand{\AssHA}[1]{\textup{(HJ#1)}}
\newcommand{\AssA}[1]{\textup{(A#1)}}
\newcommand{\Assa}{\textup{(a)}}
\newcommand{\Assw}[1]{\textup{($w#1$)}}
\newcommand{\AssV}{\textup{(V)}}
\newcommand{\AssAR}{\textup{(AR)}}
\newcommand{\AssJe}{\textup{(Je)}}
\newcommand{\AssH}[1]{\textup{(H#1)}}

\section{Introduction}
In the last two decades, problems involving $p(\cdot)$-Laplacian and fractional $p$-Laplacian have been studied intensively. These topics has become the center of studying PDEs because of its mathematical challenges and real applications. Very recently, elliptic problems involving
the fractional $p(\cdot)$-Laplacian has been investigated. The solution space for such problems is the fractional Sobolev spaces with variable exponent. However, such spaces have not been well-defined as well as not many properties on such spaces have been established. In this paper we first refine the fractional Sobolev space with variable exponent investigated in \cite{Kaufmann,Bahrouni.DCDS2018,Bahrouni.JMAA2018} and obtain fundamental imbeddings on our space. With these imbeddings in hand, we investigate the boundedness and multiplicity of solutions to the following problem
\begin{equation}\label{eq}
	\begin{cases}
		(-\Delta)_{p(x)}^su=f(x,u) \quad \text{in} \quad \Omega, \\
		u=0 \quad \text{in} \quad \Bbb R^N\setminus \Omega,
	\end{cases}
\end{equation}
where $\Omega$ is a bounded Lipschitz domain in $\mathbb{R}^N$ ($N\ge 2$); $s\in (0,1)$;  $p(x)=\widetilde{p}(x,x)$ for all $x\in\ol{\Omega}$ with $\widetilde{p}\in C(\mathbb{R}^N\times\mathbb{R}^N)$ satisfying $\widetilde{p}(x,y)=\widetilde{p}(y,x)$ for all $x,y\in\mathbb{R}^N$ and $1<\inf_{(x,y)\in \mathbb{R}^N\times\mathbb{R}^N}\widetilde{p}(x,y)\leq \sup_{(x,y)\in\mathbb{R}^N\times\mathbb{R}^N}\widetilde{p}(x,y)<\frac{N}{s};$ the operator $(-\Delta)_{p(\cdot)}^s$ is defined as
\begin{equation*}
	(- \Delta)_{p(x)}^s\, u(x) = 2\ \lim_{\varepsilon \searrow 0} \int_{\mathbb{R}^N \setminus B_\varepsilon(x)} \frac{|u(x) - u(y)|^{\widetilde{p}(x,y)-2}\, (u(x) - u(y))}{|x - y|^{N+s\widetilde{p}(x,y)}}\, \diff y, \quad x \in \mathbb{R}^N,
\end{equation*}
where $B_\epsilon(x):=\{z\in\mathbb{R}^N: |z-x|<\epsilon\};$ and $f:\ \Omega\times\Bbb R \to \Bbb R$ is a Carath\'eodory function.

\smallskip
It is a natural question whether the classical results can be recovered when elliptic equations involving the $p$-Laplacian (or $p(\cdot)$-Laplacian) are changed into non-local variational problems with variable exponents. Very recently, U. Kaufmann {\it et al.} \cite{Kaufmann} first introduced new class of fractional Sobolev spaces with variable exponents $W^{s,q(\cdot),\widetilde{p}(\cdot,\cdot)}(\Omega)$ defined as
$$W^{s,q(\cdot),\widetilde{p}(\cdot,\cdot)}(\Omega):=\bigg\{ u \in L^{q(\cdot)}(\Omega):
\int_{\Omega}\int_{\Omega} \frac{|u(x)-u(y)|^{\widetilde{p}(x,y)}}{|x-y|^{N+s\widetilde{p}(x,y)}} \,\diff x \diff y < +\infty\bigg\}, $$
where $q\in C(\overline{\Omega},(1,\infty)).$ With the restriction $\widetilde{p}(x,x)<q(x)$ for all $x\in\overline{\Omega},$ they obtained the compact imbedding $W^{s,q(\cdot),\widetilde{p}(\cdot,\cdot)}(\Omega)\hookrightarrow\hookrightarrow L^{r(\cdot)}(\Omega)$ for any $r\in C(\overline{\Omega})$ satisfying $1<r(x)<\frac{N\widetilde{p}(x,x)}{N-s\widetilde{p}(x,x)}$ for all $x\in \overline{\Omega}$. With this compact imbedding result in hand, the authors in \cite{Kaufmann} obtained a simple existence result by applying a direct  method of Calculus of Variations for the energy functional of the form
$$\mathcal{F}(u)=\int_{\Omega}\int_{\Omega} \frac{|u(x)-u(y)|^{\widetilde{p}(x,y)}}{\widetilde{p}(x,y)|x-y|^{N+s\widetilde{p}(x,y)}} \,\diff x \diff y+\int_{\Omega}\frac{|u(x)|^{q(x)}}{q(x)}\diff x-\int_{\Omega}f(x)u(x)\diff x.$$
The authors in \cite{Bahrouni.DCDS2018} gave some further basic properties both on this function space and the related nonlocal operator. As applications, they investigated the existence of solutions for
\eqref{eq} in the case of the prototype $f(x,u)=\lambda|u|^{r(x)-1}u-|u|^{q(x)-1}u$ where $\lambda>0$, $1< r(x)<\inf_{x\in\Omega}p(x)<q(x)$ for all $x\in\ol{\Omega}$. Motivated by the papers \cite{Bahrouni.DCDS2018,Kaufmann}, a variant of comparison principle for the fractional $p(\cdot)$-Laplacian and sub--supersolution principle for \eqref{eq} was presented in \cite{Bahrouni.JMAA2018}.

\smallskip
The first aim of our paper to get rid of the restricted condition $\widetilde{p}(x,x)<q(x)$ for all $x\in\overline{\Omega}$ for the compact imbedding $W^{s,q(\cdot),\widetilde{p}(\cdot,\cdot)}(\Omega)\hookrightarrow\hookrightarrow L^{r(\cdot)}(\Omega)$. Obviously, under this condition the space $W^{s,q(\cdot),\widetilde{p}(\cdot,\cdot)}(\Omega)$ is acctually not a generalization of the usual fractional Sobolev space $W^{s,p}(\Omega)$ with constant exponent. Furthermore, we also obtain the continuous imbeddings when the domain is the whole space $\mathbb{R}^N.$

Our next aim to provide a sufficient conditions guaranteeing global a-priori bounds for weak solutions of problem \eqref{eq}. The main tools for obtaining this result are the De Giorgi's iteration and a localization method. This approach originally comes from the paper \cite{VZ}. Inspired by \cite{VZ}, the boundedness of weak solutions for elliptic equations with a variable exponents and nonlinear conormal derivative boundary condition has been investigated in \cite{WZ}; see also \cite{CKK}. By modifying the techniques used in \cite{WZ}, K. Ho and I. Sim \cite{HS2014} dealt with degenerated $p(\cdot)$-Laplace equations of the form
\begin{align*}
	\begin{cases}
		-\text{div}(w(x)|\nabla u|^{p(x)-2}\nabla u)=f(x,u) \quad &\text{in} \quad \Omega, \\
		u=0 \quad &\text{on} \quad \partial \Omega.
	\end{cases}
\end{align*}
A natural question is to know whether these global a priori bounds hold for the fractional $p(\cdot)$-Laplacian.
As compared with elliptic equations involving the $p(\cdot)$-Laplacian, the value of $(-\Delta)^s_{p(x)} u(x)$ at any point $x\in \Omega$ relies not only on the values of $u$ and $p(\cdot)$ on the whole $\Omega$, but actually on the entire space $\Bbb R^N$. In this regard, more complicated analysis than the papers \cite{CKK,VZ,HS2014} has to be carefully carried out. To the best of the authors' knowledge, the present paper seems to be the first to study the regularity of weak solutions to the fractional $p(\cdot)$-Laplacian problems. Especially, even if we use the De Giorgi iteration and a localization method, it is noteworthy that we provide new condition for the exponent $p(\cdot,\cdot)$ on $\mathbb{R}^N\times\mathbb{R}^N$ in order to investigate the $L^{\infty}$-bound of weak solutions to \eqref{eq}.

\smallskip
In recent years, existence of infinitely many solutions to equations of the elliptic type has attracted much attention and has been extensively studied in the literature; see for example \cite{AL,ZBS15,CKK,Gu,Kim,KKL,KKL19,N,W0,Za} and the references therein. As an application of the $L^{\infty}$--boundedness of weak solutions, we obtain that
nonlinear problems associated with the fractional $p(\cdot)$-Laplacian admit a sequence of infinitely many small energy solutions whose their $L^{\infty}$-norms converge to zero. This existence result to nonlinear boundary value problems
\begin{align*}
	\begin{cases}
		-\Delta u=\lambda\abs{u}^{r-1}u + f(x,u)  \quad &\textmd{in} \ \
		\Omega,\\
		u= 0\quad &\text{on}\ \ \mathbb{R}^N\backslash\Omega,
	\end{cases}
\end{align*}
was originally observed by Z.-Q. Wang \cite{W0} where $0<r<1$, and the nonlinear term $f$ was considered as a perturbation term. In order to obtain this existence result, he divided the outlines of the proof into two steps. The first one is to utilize the modified functional method. More precisely, he modified and
extended the function $f(x, u)$ to an adequate function ${\widetilde f} (x,
u)$ to apply global variational formulation in \cite{H}. The other one is to get the existence of a sequence of solutions converging to $0$ in $L^{\infty}$ by applying the standard regularity theory. Employing this argument in \cite{W0}, Z. Guo \cite{Gu} showed that the $p$-Laplacian equations with
indefinite concave nonlinearities have infinitely many solutions; see also \cite{CKK, Kim, N}. As we know, some global assumptions on $f(x,u)$ were used in an essential way to derive the existence of infinitely many solutions for elliptic equations; see \cite{AL,ZBS15,CKK,KKL,KKL19,Za}. However the authors in \cite{CKK, Gu, Kim, N, W0} investigated the existence of small energy solutions to equations of the elliptic type when the conditions on $f(x,
u)$ are imposed near zero; in particular, $f(x,u)$ is odd in $t$ for
a small $t$, and no conditions on $f (x, u)$ exist at infinity. In particular, if we apply the well known regularity arguments for elliptic equations, it is easy to establish the existence of such a sequence of solutions belonging to $L^{\infty}$ space. As far as we are aware, there were no such regularity and existence results for fractional $p(\cdot)$-Laplacian problems. In comparison with the papers \cite{CKK, Gu, W0}, the main difficulty to obtain our second aim is to show the $L^{\infty}$-bound of weak solutions for the given problem. We remark that the strategy for obtaining this multiplicity is to assign a regularity-type result in our second aim.

One of the novelties of this paper is that we refine the fractional Sobolev spaces with variable exponents given in \cite{Kaufmann,Bahrouni.DCDS2018,Bahrouni.JMAA2018} and obtain fundamental imbeddings on our new space. The other one is to give a sufficient condition for the exponent $p(\cdot,\cdot)$ on $\mathbb{R}^N\times\mathbb{R}^N$ for that achieves the iteration argument of De Giorgi type and get global boundedness of weak solutions to \eqref{eq}.

This paper is organized as follows. In Section 2, we briefly review the definitions and collect some preliminary results for the Lebesgue spaces with variable exponent and the fractional Sobolev spaces. In Section 3, we refine fractional Sobolev space with variable exponent given in \cite{Kaufmann} and obtain the crucial imbeddings from these spaces into variable exponent Lebesgue spaces. The main result that requires the new condition for the variable exponent $p(\cdot,\cdot)$ on $\mathbb{R}^N\times\mathbb{R}^N$ is proven in Section 4. For this we employ the De Giorgi's iteration and a localization method, which is suitable to investigate the $L^{\infty}$-bound of weak solutions to \eqref{eq}. As its application to the fractional $p(\cdot)$-Laplacian problems, Section 5 gives the existence of a sequence of infinitely many small energy solutions whose their $L^{\infty}$-norms converge to zero.

\section{Variable exponent Lebesgue spaces and fractional Sobolev spaces}
In this section, we briefly review the definitions and list some basic properties of the Lebesgue spaces with variable exponent and the fractional Sobolev spaces.

Let $\Omega$ be a Lipschitz domain in $\mathbb{R}^N.$ Denote
$$
C_+(\overline\Omega)=\left\{h\in C(\overline\Omega):
1<\inf_{x\in\overline\Omega}h(x)\leq \sup_{x\in\overline\Omega}h(x)<\infty\right\},
$$
and for $h\in C_+(\overline\Omega),$ denote
$$
h^+=\sup\limits_{x\in \overline\Omega}h(x)\quad \hbox{and}\quad
h^-=\inf\limits_{x\in \overline\Omega}h(x).
$$
For $p\in C_+(\overline\Omega)$ and a $\sigma$-finite, complete measure $\mu$ in $\Omega,$ define the variable
exponent Lebesgue space
$$
L_\mu^{p(\cdot)}(\Omega) := \left \{ u : \Omega\to\mathbb{R}\  \hbox{is}\  \mu-\text{measurable},\ \int_\Omega |u(x)|^{p(x)} \;\diff\mu < \infty \right \},
$$
endowed with the Luxemburg norm
$$
\norm{u}_{L_\mu^{p(\cdot)}(\Omega)}:=\inf\left\{\lambda >0:
\int_\Omega
\Big|\frac{u(x)}{\lambda}\Big|^{p(x)}\;\diff\mu\le1\right\}.
$$
 When $\diff \mu=\diff x$ the usual Lebesgue measure, we write  $L^{p(\cdot) }(\Omega) $  and $\norm{u}_{L^{p(\cdot)}(\Omega)}$  instead of $L_\mu^{p(\cdot)}(\Omega)$ and $\norm{u}_{L_\mu^{p(\cdot)}(\Omega)}$, respectively. Some basic properties of $L_\mu^{p(\cdot)}(\Omega)$ are listed in the next three propositions.

\begin{proposition}{\rm (\cite[Corollary 3.3.4]{DHHR})}\label{est.Deining} Let $\al,\beta\in C_+(\ol{\Omega})$ such that $\al(x)\le \beta(x)$ for all $x\in\ol{\Omega}.$ Then, we have 
$$\norm{u}_{L_\mu^{\alpha(\cdot)}(\Omega)}\le
	2\big[1+\mu(\Omega)\big]\norm{u}_{L_\mu^{\beta(\cdot)}(\Omega)},\forall u\in L_\mu^{\alpha(\cdot)}(\Omega)\cap L_\mu^{\beta(\cdot)}(\Omega).$$
\end{proposition}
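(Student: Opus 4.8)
The plan is to reduce the statement to an elementary pointwise comparison between $a^{\alpha(x)}$ and $a^{\beta(x)}$ for $a\ge 0$, combined with the relation between the Luxemburg norm and the modular $\rho_\gamma(v):=\int_\Omega|v(x)|^{\gamma(x)}\diff\mu$. First I would dispose of the trivial cases: if $\mu(\Omega)=\infty$ the right-hand side is infinite and there is nothing to prove, so assume $\mu(\Omega)<\infty$; and if $\norm{u}_{L_\mu^{\beta(\cdot)}(\Omega)}=0$ then $u=0$ $\mu$-a.e. and the inequality is obvious. Hence fix $u$ with $0<\norm{u}_{L_\mu^{\beta(\cdot)}(\Omega)}<\infty$ and set $M:=1+\mu(\Omega)\ge 1$.

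Next I would take an arbitrary $\lambda>\norm{u}_{L_\mu^{\beta(\cdot)}(\Omega)}$ and put $g:=u/\lambda$, so that $\rho_\beta(g)\le 1$ by the definition of the Luxemburg norm (the set of admissible scalars is an up-set). The heart of the argument is the estimate $\rho_\alpha(g/M)\le 1$. To obtain it, split $\Omega$ into $A:=\{x\in\Omega:|g(x)|\le 1\}$ and $B:=\{x\in\Omega:|g(x)|>1\}$. On $B$ one has $|g(x)|^{\alpha(x)}\le|g(x)|^{\beta(x)}$ since $\alpha(x)\le\beta(x)$ and $|g(x)|>1$, while $M^{\alpha(x)}\ge M$ because $M\ge 1$ and $\alpha(x)\ge 1$; hence
\[
\int_B\Big|\frac{g(x)}{M}\Big|^{\alpha(x)}\diff\mu\le\frac{1}{M}\int_B|g(x)|^{\beta(x)}\diff\mu\le\frac{1}{M}\,\rho_\beta(g)\le\frac{1}{M}.
\]
On $A$ one has $|g(x)/M|^{\alpha(x)}\le M^{-\alpha(x)}\le M^{-1}$, whence $\int_A|g(x)/M|^{\alpha(x)}\diff\mu\le \mu(A)/M\le\mu(\Omega)/M$. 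Adding the two contributions gives $\rho_\alpha(g/M)\le(1+\mu(\Omega))/M=1$.

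From $\rho_\alpha(g/M)\le 1$ the definition of the norm immediately yields $\norm{g/M}_{L_\mu^{\alpha(\cdot)}(\Omega)}\le 1$, i.e. $\norm{u}_{L_\mu^{\alpha(\cdot)}(\Omega)}\le\lambda M$. Letting $\lambda\downarrow\norm{u}_{L_\mu^{\beta(\cdot)}(\Omega)}$ gives $\norm{u}_{L_\mu^{\alpha(\cdot)}(\Omega)}\le(1+\mu(\Omega))\norm{u}_{L_\mu^{\beta(\cdot)}(\Omega)}$, which is in fact slightly stronger than the asserted inequality (the extra factor $2$ simply leaves room to spare).

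I do not expect a genuine obstacle here: the only points needing a moment's care are choosing the scaling factor $M=1+\mu(\Omega)$ so that each of the two pieces of the modular contributes at most $1/M$, and the harmless reduction to $\mu(\Omega)<\infty$. If one prefers to avoid the limiting argument in $\lambda$, one may instead invoke the unit-ball property $\rho_\beta\big(u/\norm{u}_{L_\mu^{\beta(\cdot)}(\Omega)}\big)\le 1$ (valid since $\beta^+<\infty$, a standard property of variable exponent Lebesgue spaces) and run the same computation with $g:=u/\norm{u}_{L_\mu^{\beta(\cdot)}(\Omega)}$.
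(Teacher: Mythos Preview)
Your argument is correct and in fact yields the sharper bound $\norm{u}_{L_\mu^{\alpha(\cdot)}(\Omega)}\le\big[1+\mu(\Omega)\big]\norm{u}_{L_\mu^{\beta(\cdot)}(\Omega)}$, so the factor $2$ in the stated proposition is indeed superfluous. The paper itself does not supply a proof of this proposition: it is quoted verbatim from \cite[Corollary~3.3.4]{DHHR} and used as a black box throughout. Your direct modular computation---normalise by $\lambda$, scale by $M=1+\mu(\Omega)$, split according to $\{|g|\le1\}$ and $\{|g|>1\}$---is precisely the standard route to such embeddings in the variable-exponent literature, and there is nothing to correct or add.
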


 \begin{proposition} \label{lem1}{\rm (\cite{FZ, KR})}
The space $L^{p(\cdot)}(\Omega)$ is a separable, uniformly
convex Banach space, and its dual space is
$L^{p'(\cdot)}(\Omega),$ where $1/p(x)+1/p'(x)=1$. For any $u\in
L^{p(\cdot)}(\Omega)$ and $v\in L^{p'(x)}(\Omega)$, we have
$$
\Big|\int_\Omega uv\;\diff x\Big|
\le
2\norm{u}_{L^{p(\cdot)}(\Omega)}\norm{v}_{L^{p'(\cdot)}(\Omega)}.
$$
\end{proposition}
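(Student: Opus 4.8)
These properties are classical (see \cite{FZ,KR}); the structure of a self-contained argument is as follows. The plan is to organize everything around the modular $\varrho(u):=\int_\Omega|u(x)|^{p(x)}\,\diff x$ and a handful of elementary consequences of the definition of the Luxemburg norm that hold because $p^+<\infty$: the unit--sphere correspondence $\norm{u}_{L^{p(\cdot)}(\Omega)}=1\iff\varrho(u)=1$ (more generally $\norm{u}_{L^{p(\cdot)}(\Omega)}\le 1\iff\varrho(u)\le 1$), the fact that $\varrho\bigl(u/\norm{u}_{L^{p(\cdot)}(\Omega)}\bigr)\le 1$ for $u\neq\Zero$, and two--sided bounds relating a power of $\norm{u}_{L^{p(\cdot)}(\Omega)}$ to $\varrho(u)$; these follow from monotonicity and continuity (by dominated convergence, using $p^+<\infty$) of $\lambda\mapsto\varrho(u/\lambda)$. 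Granting them, the norm axioms are routine — the triangle inequality uses convexity of $t\mapsto|t|^{p(x)}$ — and completeness follows from the standard series criterion: an absolutely norm--convergent series $\sum u_k$ has $\sum|u_k|$ finite a.e.\ and of finite modular by the monotone convergence theorem, whence Fatou's lemma applied to the partial sums yields norm convergence.

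For the H\"older--type inequality I would reduce to $\norm{u}_{L^{p(\cdot)}(\Omega)},\norm{v}_{L^{p'(\cdot)}(\Omega)}>0$ and apply Young's inequality pointwise,
\[
\frac{|u(x)|}{\norm{u}_{L^{p(\cdot)}(\Omega)}}\cdot\frac{|v(x)|}{\norm{v}_{L^{p'(\cdot)}(\Omega)}}\le\frac{1}{p(x)}\Bigl(\frac{|u(x)|}{\norm{u}_{L^{p(\cdot)}(\Omega)}}\Bigr)^{p(x)}+\frac{1}{p'(x)}\Bigl(\frac{|v(x)|}{\norm{v}_{L^{p'(\cdot)}(\Omega)}}\Bigr)^{p'(x)}\quad\text{a.e.};
\]
integrating over $\Omega$ and using $\tfrac1{p(x)}\le1$, $\tfrac1{p'(x)}\le1$ together with $\varrho\bigl(u/\norm{u}_{L^{p(\cdot)}(\Omega)}\bigr)\le1$ and the analogous bound for $v$, the right-hand side is at most $2$, which is exactly the claimed constant.

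The duality statement carries the real weight. The map $T\colon L^{p'(\cdot)}(\Omega)\to(L^{p(\cdot)}(\Omega))^*$, $v\mapsto\bigl(u\mapsto\int_\Omega uv\,\diff x\bigr)$, is well defined and bounded by the H\"older inequality above and injective (test against $\operatorname{sgn}(v)$ restricted to sets of finite measure). Surjectivity is the crux: given $\phi\in(L^{p(\cdot)}(\Omega))^*$, on each subset of finite measure the set function $A\mapsto\phi(\chi_A)$ is countably additive and absolutely continuous with respect to Lebesgue measure, so the Radon--Nikodym theorem produces a measurable $v$ with $\phi(u)=\int_\Omega uv\,\diff x$ for simple $u$; one then shows $v\in L^{p'(\cdot)}(\Omega)$ with $\norm{v}_{L^{p'(\cdot)}(\Omega)}\le C\norm{\phi}$ by evaluating $\phi$ on a suitable sequence of truncations of $|v|^{p'(x)-2}v$ (noting $(p'(x)-1)p(x)=p'(x)$) and invoking the modular bounds, and finally extends $\phi(u)=\int_\Omega uv\,\diff x$ to all of $L^{p(\cdot)}(\Omega)$ by density of simple functions. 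Separability belongs to the same circle of ideas: simple functions with rational coefficients supported on finitely many members of a fixed countable algebra generating the Borel sets on a $\sigma$--finite exhaustion of $\Omega$ form a countable dense set, because modular convergence of such approximants forces norm convergence by the two--sided bounds.

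The main obstacle is uniform convexity, which I would treat last by first proving a \emph{modular} uniform convexity estimate and then transferring it to the norm. Splitting $\Omega$ into $\Omega_1=\{p\ge2\}$ and $\Omega_2=\{1<p<2\}$, one applies on $\Omega_1$ the elementary inequality $\bigl|\tfrac{a+b}{2}\bigr|^q+\bigl|\tfrac{a-b}{2}\bigr|^q\le\tfrac12\bigl(|a|^q+|b|^q\bigr)$ valid for $q\ge2$, and on $\Omega_2$ the Clarkson inequality $\bigl|\tfrac{a+b}{2}\bigr|^{q'}+\bigl|\tfrac{a-b}{2}\bigr|^{q'}\le\bigl(\tfrac12(|a|^q+|b|^q)\bigr)^{q'-1}$ valid for $1<q<2$, with $q=p(x)$, to conclude that $\varrho(u)=\varrho(v)=1$ and $\varrho\bigl(\tfrac{u-v}{2}\bigr)\ge\e$ force $\varrho\bigl(\tfrac{u+v}{2}\bigr)\le1-\delta(\e)$ with $\delta(\e)>0$ depending only on $\e,p^-,p^+$. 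The step I expect to be most delicate is the passage back to the norm: turning $\norm{u-v}_{L^{p(\cdot)}(\Omega)}\ge\e$ (with $\norm{u}_{L^{p(\cdot)}(\Omega)}=\norm{v}_{L^{p(\cdot)}(\Omega)}=1$) into a lower bound $\varrho\bigl(\tfrac{u-v}{2}\bigr)\ge\e_1(\e)>0$, and then $\varrho\bigl(\tfrac{u+v}{2}\bigr)\le1-\delta$ into $\norm{\tfrac{u+v}{2}}_{L^{p(\cdot)}(\Omega)}\le1-\delta'(\e)$, via the unit--sphere correspondence and the two--sided modular bounds, checking that every constant stays uniform precisely because $1<p^-\le p^+<\infty$.
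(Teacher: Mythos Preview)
The paper does not prove this proposition at all: it is stated with a citation to \cite{FZ,KR} and treated as background. Your sketch is essentially the standard route found in those references --- Young's inequality pointwise for the H\"older estimate, Radon--Nikodym for duality, simple functions for separability, and pointwise Clarkson-type inequalities (split over $\{p\ge2\}$ and $\{1<p<2\}$) lifted from modular to norm for uniform convexity --- and it is correct. There is nothing to compare against in the paper itself; your write-up simply fills in what the authors defer to the literature.
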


\begin{proposition} \label{norm-modular}
{\rm (\cite{FZ})} Define the modular $\rho:\ L^{p(\cdot)}(\Omega)\to\mathbb{R}$ as
$$
\rho(u):=\int_\Omega |u|^{p(x)}\;\diff x, \quad \forall u\in
L^{p(\cdot)}(\Omega).
$$
Then, we have the following relations between norm and modular.
\begin{enumerate}
	\item[\rm{(i)}] For $u\in L^{p(\cdot)}(\Omega)\setminus\{0\},$  $\lambda=\norm{u}_{L^{p(\cdot)}(\Omega)}$\ if and only if \ $\rho(\frac{u}{\lambda})=1.$
\item[\rm{(ii)}] $\rho(u)>1$ $(=1;\ <1)$ if and only if \ $\norm{u}_{L^{p(\cdot)}(\Omega)}>1$ $(=1;\ <1)$,
respectively.
\item[\rm{(iii)}] If $\norm{u}_{L^{p(\cdot)}(\Omega)}>1$, then $
\norm{u}_{L^{p(\cdot)}(\Omega)}^{p^-}\le \rho(u)\le
\norm{u}_{L^{p(\cdot)}(\Omega)}^{p^+}$.
\item[\rm{(iv)}] If $\norm{u}_{L^{p(\cdot)}(\Omega)}<1$, then $
\norm{u}_{L^{p(\cdot)}(\Omega)}^{p^+}\le \rho(u)\le
\norm{u}_{L^{p(\cdot)}(\Omega)}^{p^-}$.
\end{enumerate}
\end{proposition}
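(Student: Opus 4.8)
The plan is to derive all four assertions from the behaviour of the one-variable function $g(\lambda):=\rho\big(u/\lambda\big)$, $\lambda>0$, attached to a fixed $u\in L^{p(\cdot)}(\Omega)\setminus\{0\}$ (the case $u=0$ being checked directly in each item, since then $\rho(u)=0<1$ and $\norm{u}_{L^{p(\cdot)}(\Omega)}=0<1$). First I would record the elementary pointwise estimate $\big|u(x)/\lambda\big|^{p(x)}\le\max\{\lambda^{-p^-},\lambda^{-p^+}\}\,|u(x)|^{p(x)}$, valid for a.e.\ $x$ and all $\lambda>0$ because $p^-\le p(x)\le p^+$; integrating it shows $g(\lambda)\le\max\{\lambda^{-p^-},\lambda^{-p^+}\}\rho(u)<\infty$, so $g$ is finite on all of $(0,\infty)$. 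The same reasoning, splitting according to whether $\lambda\le1$ or $\lambda\ge1$, gives the two-sided bounds
\begin{equation*}
\lambda^{-p^-}\rho(u)\le g(\lambda)\le\lambda^{-p^+}\rho(u)\quad(0<\lambda\le1),\qquad
\lambda^{-p^+}\rho(u)\le g(\lambda)\le\lambda^{-p^-}\rho(u)\quad(\lambda\ge1).
\end{equation*}
Next I would show that $g$ is continuous on $(0,\infty)$ (dominated convergence: on each compact subinterval the integrand is dominated by a fixed multiple of $|u|^{p(x)}$), strictly decreasing (for each $x$ in the positive-measure set $\{u\ne0\}$ the function $\lambda\mapsto|u(x)/\lambda|^{p(x)}$ is strictly decreasing), with $g(\lambda)\to0$ as $\lambda\to\infty$ (dominated convergence again, the integrand being $\le|u|^{p(x)}$ for $\lambda\ge1$ and tending to $0$ pointwise) and $g(\lambda)\to+\infty$ as $\lambda\to0^{+}$ (monotone convergence on $\{u\ne0\}$). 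Since the Luxemburg norm is $\norm{u}_{L^{p(\cdot)}(\Omega)}=\inf\{\lambda>0:g(\lambda)\le1\}$, these facts force the infimum to be attained at the unique $\lambda_0>0$ with $g(\lambda_0)=1$, which is exactly (i).

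Writing $\lambda_0:=\norm{u}_{L^{p(\cdot)}(\Omega)}$, so that $g(\lambda_0)=1$ for $u\ne0$, assertion (ii) follows by comparing $g(1)=\rho(u)$ with $g(\lambda_0)=1$ via the strict monotonicity of $g$: $\rho(u)>1$ forces $\lambda_0>1$, $\rho(u)=1$ forces $\lambda_0=1$, and $\rho(u)<1$ forces $\lambda_0<1$. For (iii) and (iv) I would substitute $\lambda=\lambda_0$ into the appropriate two-sided bound above and use $g(\lambda_0)=1$: if $\lambda_0>1$ then $\lambda_0^{-p^+}\rho(u)\le1\le\lambda_0^{-p^-}\rho(u)$, i.e.\ $\norm{u}_{L^{p(\cdot)}(\Omega)}^{p^-}\le\rho(u)\le\norm{u}_{L^{p(\cdot)}(\Omega)}^{p^+}$, which is (iii); if $0<\lambda_0<1$ then $\lambda_0^{-p^-}\rho(u)\le1\le\lambda_0^{-p^+}\rho(u)$, i.e.\ $\norm{u}_{L^{p(\cdot)}(\Omega)}^{p^+}\le\rho(u)\le\norm{u}_{L^{p(\cdot)}(\Omega)}^{p^-}$, which is (iv).

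The routine ingredients are the pointwise inequalities and their integration; the one step needing genuine care is establishing the analytic properties of $g$ in the first paragraph, in particular the continuity and the limit $g(\lambda)\to0$ as $\lambda\to\infty$, for which one must exhibit an integrable dominating function. Here the definition of $L^{p(\cdot)}(\Omega)$ is what makes this painless, since it guarantees $\rho(u)=\int_\Omega|u|^{p(x)}\diff x<\infty$, so that $|u|^{p(x)}$ itself (times a constant depending on the $\lambda$-range) dominates the relevant integrands and Lebesgue's dominated convergence theorem and the monotone convergence theorem apply directly.
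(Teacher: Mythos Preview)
Your argument is correct. The paper does not supply its own proof of this proposition; it simply quotes the result from \cite{FZ}, so there is nothing to compare against directly. What you have written is precisely the standard derivation: the key observation that $g(\lambda)=\rho(u/\lambda)$ is continuous, strictly decreasing, and runs from $+\infty$ to $0$ pins down the Luxemburg norm as the unique level $g(\lambda_0)=1$, after which (ii)--(iv) drop out from the two-sided bounds $\lambda^{-p^\pm}\rho(u)$ on $g(\lambda)$. One small remark: for the limits $g(\lambda)\to0$ and $g(\lambda)\to+\infty$ you can bypass dominated/monotone convergence entirely, since the algebraic bounds you already wrote down ($g(\lambda)\le\lambda^{-p^-}\rho(u)$ for $\lambda\ge1$ and $g(\lambda)\ge\lambda^{-p^-}\rho(u)$ for $\lambda\le1$) give them immediately once you know $0<\rho(u)<\infty$.
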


Let $s\in (0,1)$ and $p\in (1,\infty)$ be constants. Define the fractional Sobolev space $
W^{s,p}(\Omega)$ as
$$
W^{s,p}(\Omega):=\big\{u\in L^p(\Omega): \int_{\Omega}\int_{\Omega}\frac{|u(x)-u(y)|^p}{|x-y|^{N+sp}}\diff x\diff y<\infty\big\}
$$
endowed with norm
\begin{equation*}
\label{norm}
\|u\|_{s,p,\Omega}:=\left(\int_{\Omega}|u(x)|^p\diff x\right)^{1/p}+\left(\int_{\Omega}\int_{\Omega}\frac{|u(x)-u(y)|^p}{|x-y|^{N+sp}}\diff x \diff y\right)^{1/p}.
\end{equation*}
We recall the following crucial results.
\begin{proposition}{\rm (\cite{DPV})} \label{imb.frac.const}Let $s\in (0,1)$ and $p\in (1,\infty)$ be such that $sp<N.$ It holds that
	\begin{itemize}
		\item[(i)] $W^{s,p}(\Omega)\hookrightarrow \hookrightarrow L^{q}(\Omega)$ if $\Omega$ is bounded and $1\leq q< p_s^\ast:=\frac{Np}{N-sp}$;
		\item[(i)]$W^{s,p}(\Omega)\hookrightarrow  L^{q}(\Omega)$ if $p\leq q\leq p_s^\ast.$
	\end{itemize}
\end{proposition}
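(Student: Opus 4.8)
The plan is to reduce the proposition to a single global estimate on $\R^N$, import it to $\Omega$ via an extension operator, and deduce the compactness in (i) from a Fr\'echet--Kolmogorov argument. Write $[u]_{s,p,D}:=\big(\int_{D}\int_{D}\frac{|u(x)-u(y)|^p}{|x-y|^{N+sp}}\diff x\diff y\big)^{1/p}$ for the Gagliardo seminorm over a set $D$. The crux is the global fractional Sobolev inequality: there is $C=C(N,s,p)>0$ such that
\begin{equation*}
\|u\|_{L^{p_s^\ast}(\R^N)}\le C\,[u]_{s,p,\R^N}\qquad\text{for all }u\in C_c^\infty(\R^N),
\end{equation*}
and hence, by density, for every $u$ with $[u]_{s,p,\R^N}<\infty$ that vanishes at infinity; I expect this to be the main obstacle. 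To prove it I would run the averaging argument: for $x\in\R^N$ and $R>0$, compare $u(x)$ with its mean over $B_R(x)$, so that the difference is the mean over $y\in B_R(x)$ of $u(x)-u(y)$, which by H\"older is at most $CR^{s}\big(\int_{B_R(x)}\frac{|u(x)-u(y)|^p}{|x-y|^{N+sp}}\diff y\big)^{1/p}$, whereas the mean over $B_R(x)$ telescopes over dyadic radii and vanishes as $R\to\infty$; balancing the two contributions by an appropriate choice of $R=R(x)$ produces the weak-type bound $|\{|u|>\lambda\}|\le C\lambda^{-p_s^\ast}[u]_{s,p,\R^N}^{p_s^\ast}$, which is upgraded to the stated strong-type inequality by summing over dyadic levels of $\lambda$. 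This is precisely the computation of \cite[Lemma 6.1 and Theorem 6.5]{DPV}.

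Next I would prove the continuous embeddings in (ii). Since $\Omega$ is Lipschitz, there is a bounded linear extension operator $E\colon W^{s,p}(\Omega)\to W^{s,p}(\R^N)$ (\cite[Section 5]{DPV}). For $q=p_s^\ast$ this yields
\begin{equation*}
\|u\|_{L^{p_s^\ast}(\Omega)}\le\|Eu\|_{L^{p_s^\ast}(\R^N)}\le C\,[Eu]_{s,p,\R^N}\le C\,\|Eu\|_{W^{s,p}(\R^N)}\le C\,\|u\|_{s,p,\Omega};
\end{equation*}
for $q=p$ the embedding is contained in the definition of $\|\cdot\|_{s,p,\Omega}$; and for $p<q<p_s^\ast$ one interpolates, $\|u\|_{L^{q}(\Omega)}\le\|u\|_{L^{p}(\Omega)}^{1-\theta}\|u\|_{L^{p_s^\ast}(\Omega)}^{\theta}$ with $\frac1q=\frac{1-\theta}{p}+\frac{\theta}{p_s^\ast}$, $\theta\in(0,1)$ (no boundedness of $\Omega$ is needed here).

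Finally I would establish the compact embedding in (i), with $\Omega$ bounded. Given a bounded sequence $(u_n)_n$ in $W^{s,p}(\Omega)$, multiply $Eu_n$ by a fixed cut-off $\psi\in C_c^\infty(\R^N)$ equal to $1$ on $\Omega$, so that $v_n:=\psi\,Eu_n$ is supported in one fixed ball and still satisfies $\|v_n\|_{W^{s,p}(\R^N)}\le C\,\|u_n\|_{s,p,\Omega}$. Then $(v_n)_n$ is bounded in $L^p(\R^N)$ with equi-bounded supports, and the elementary estimate $\int_{\R^N}|v(x+h)-v(x)|^p\diff x\le C\,|h|^{sp}\,[v]_{s,p,\R^N}^{p}$, valid for $|h|\le1$, makes the $L^p$-translations of $(v_n)_n$ equicontinuous uniformly in $n$; by the Riesz--Fr\'echet--Kolmogorov theorem $(v_n)_n$, hence $(u_n)_n$, is precompact in $L^p(\Omega)$. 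For $1\le q\le p$ one then has $\|u\|_{L^{q}(\Omega)}\le|\Omega|^{1/q-1/p}\|u\|_{L^{p}(\Omega)}$, and for $p<q<p_s^\ast$ one interpolates exactly as above, the $L^{p_s^\ast}(\Omega)$-norms being bounded by part (ii); so a subsequence of $(u_n)_n$ that is Cauchy in $L^p(\Omega)$ is Cauchy in $L^{q}(\Omega)$, which gives $W^{s,p}(\Omega)\hookrightarrow\hookrightarrow L^{q}(\Omega)$. Everything past the first paragraph is routine soft analysis; the real work is the global inequality.
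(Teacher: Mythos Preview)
The paper does not give its own proof of this proposition: it is stated as a quotation from \cite{DPV} and used as a black box. Your proposal is therefore not in competition with any argument in the paper; it is, in effect, a sketch of the proofs of \cite[Theorems~6.5, 6.7, and~7.1]{DPV}, and it follows those proofs closely and correctly---the global Sobolev inequality via the weak-type estimate, transfer to $\Omega$ through a Lipschitz extension, interpolation for intermediate exponents, and compactness via Fr\'echet--Kolmogorov after extension and cut-off. One small remark: the translation estimate $\int_{\R^N}|v(x+h)-v(x)|^p\,\diff x\le C|h|^{sp}[v]_{s,p,\R^N}^p$ that you invoke is correct but is not entirely ``elementary''; it is proved by inserting the average of $v$ over $B_{|h|}(x)$ and then bounding each piece by the Gagliardo integrand restricted to $|x-y|\lesssim|h|$. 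You might want to indicate that step, since it is the only place where the fractional seminorm actually enters the compactness argument.
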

\begin{proposition}{\rm (\cite{DPV})} \label{P-S.inequality}
	Let $s\in (0,1)$ and $p\in (1,\infty)$ be such that $sp<N.$ Then, there exists a positive constant $C=C(N,p,s)$ such that, for any measurable and compactly supported function $f:\mathbb{R}^N\to\mathbb{R},$ we have
	$$\|f\|_{L^{p_s^\ast}(\mathbb{R}^N)}\leq C\left(\int_{\mathbb{R}^N}\int_{\mathbb{R}^N}\frac{|f(x)-f(y)|^p}{|x-y|^{N+sp}}\diff x \diff y\right)^{1/p}.$$
\end{proposition}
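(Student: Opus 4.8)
The plan is to deduce this homogeneous fractional Sobolev inequality from the bounded-domain embedding already recorded in Proposition~\ref{imb.frac.const}(ii), applied on the unit ball, together with a scaling argument that simultaneously removes the lower-order $L^p$ term and the dependence on the diameter of the support. I may assume $[f]^p:=\iint_{\mathbb{R}^N\times\mathbb{R}^N}\frac{|f(x)-f(y)|^p}{|x-y|^{N+sp}}\,\diff x\diff y<\infty$, since otherwise there is nothing to prove; throughout write $B_R:=B_R(0)$.

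The first step is to observe that for a function $g$ with $\operatorname{supp} g\subseteq\overline{B_1}$, the nonlocal interaction between $B_1$ and its complement already controls $\|g\|_{L^p(B_1)}$. Indeed, for $x\in B_1$ one has $B_1\subseteq B_2(x)$, hence
\begin{equation*}
\int_{\mathbb{R}^N\setminus B_1}\frac{\diff y}{|x-y|^{N+sp}}\ \ge\ \int_{\mathbb{R}^N\setminus B_2(x)}\frac{\diff y}{|x-y|^{N+sp}}\ =\ c_0\ >\ 0,
\end{equation*}
with $c_0=c_0(N,s,p)$; since $g$ vanishes off $B_1$ this gives $c_0\int_{B_1}|g|^p\,\diff x\le\iint_{B_1\times(\mathbb{R}^N\setminus B_1)}\frac{|g(x)-g(y)|^p}{|x-y|^{N+sp}}\,\diff y\diff x\le [g]^p$, so $g\in W^{s,p}(B_1)$ with $\|g\|_{s,p,B_1}\le C(N,s,p)\,[g]$ (the Gagliardo part of the norm being bounded by $[g]$ trivially). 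Since $B_1$ is a bounded Lipschitz, hence extension, domain, Proposition~\ref{imb.frac.const}(ii) with $\Omega=B_1$ and $q=p_s^\ast$ then yields
\begin{equation*}
\|g\|_{L^{p_s^\ast}(\mathbb{R}^N)}=\|g\|_{L^{p_s^\ast}(B_1)}\le C\,\|g\|_{s,p,B_1}\le C(N,s,p)\,[g]\qquad\text{whenever }\operatorname{supp} g\subseteq\overline{B_1}.
\end{equation*}

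The second step is scaling. Given $f$ as in the statement, choose $R>0$ with $\operatorname{supp} f\subseteq\overline{B_R}$ and set $g(x):=f(Rx)$, so that $\operatorname{supp} g\subseteq\overline{B_1}$. Elementary changes of variables give $\|g\|_{L^{p_s^\ast}(\mathbb{R}^N)}=R^{-N/p_s^\ast}\|f\|_{L^{p_s^\ast}(\mathbb{R}^N)}$ and $[g]=R^{\,s-N/p}[f]$. Substituting these into the inequality obtained in the first step and using the identity $\frac{N}{p_s^\ast}=\frac{N}{p}-s$ — precisely the scale invariance of the estimate — the powers of $R$ cancel, and I arrive at $\|f\|_{L^{p_s^\ast}(\mathbb{R}^N)}\le C(N,s,p)\,[f]$, which is the claim.

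The only step carrying any real content is the first: recognising that, for a compactly supported function, the tail part of the Gagliardo seminorm dominates its $L^p$ norm, which is what licenses discarding the $\|g\|_{L^p}$ term appearing in Proposition~\ref{imb.frac.const}(ii); the rest is bookkeeping, the one subtlety being that Proposition~\ref{imb.frac.const}(ii) must be invoked on the ball $B_1$ rather than on all of $\mathbb{R}^N$. Alternatively one could argue directly, telescoping the spherical averages of $f$ over dyadic balls and estimating each increment by Hölder's inequality against the kernel $|x-y|^{-N-sp}$, but given Proposition~\ref{imb.frac.const} the route above is the most economical.
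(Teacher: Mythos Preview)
The paper does not prove this proposition; it is quoted from \cite{DPV} as a known result. Your argument is correct: the tail estimate $\int_{\mathbb{R}^N\setminus B_1}|x-y|^{-N-sp}\,\diff y\ge c_0$ for $x\in B_1$ does recover the $L^p$ norm of a function supported in $\overline{B_1}$ from the global Gagliardo seminorm, the application of Proposition~\ref{imb.frac.const}(ii) on the ball is legitimate, and the scaling computation is clean --- the identity $N/p_s^\ast=N/p-s$ is exactly what makes the powers of $R$ cancel.

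One caveat worth flagging is a potential circularity with the source. In \cite{DPV} the critical embedding $W^{s,p}(\Omega)\hookrightarrow L^{p_s^\ast}(\Omega)$ for extension domains --- which is what Proposition~\ref{imb.frac.const}(ii) records at the endpoint $q=p_s^\ast$ --- is itself deduced from the global inequality you are proving, by extending to $\mathbb{R}^N$ and invoking it there. So while your deduction is valid within this paper's logical structure, where both propositions are imported as black boxes, it does not furnish an independent proof of the statement. The self-contained argument in \cite{DPV} is precisely the dyadic telescoping of spherical averages you allude to in your closing remark; that route builds the inequality from scratch without appealing to any pre-existing embedding, at the cost of somewhat more computation.
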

\section{The Sobolev spaces $W^{s,p(\cdot,\cdot)}(\Omega)$}
In this section, we refine the definition and some imbedding results on fractional Sobolev spaces with variable exponent that was first introduced in \cite{Kaufmann}.

 Let $\Omega$ be a Lipschitz domain in $\mathbb{R}^N$. Let $0<s<1$ and let $p\in C(\ol{\Omega}\times
\ol{\Omega})$ be such that $p$ is symmetric, i.e.,
$p(x,y) = p(y, x)$ for all $x,y\in \ol{\Omega},$ and $$1 < p^-:=\inf_{(x,y)\in \ol{\Omega}\times
\ol{\Omega}}p(x,y) \le p^+:=\sup_{(x,y)\in \ol{\Omega}\times
\ol{\Omega}}p(x,y) < +\infty.$$

For $q
\in C_+(\ol{\Omega})$, define
$$
W^{s,q(\cdot),p(\cdot,\cdot)}(\Omega):=\bigg\{ u \in L^{q(\cdot)}(\Omega):
\int_{\Omega}\int_{\Omega}
\frac{|u(x)-u(y)|^{p(x,y)}}{|x-y|^{N+sp(x,y)}} \,\diff x \diff y < +
\infty\bigg\},
$$
and for $u\in W^{s,q(\cdot),p(\cdot,\cdot)}(\Omega),$ set
$$
[u]_{s,p(\cdot,\cdot),\Omega}:=\inf \left \{\lambda>0:
\int_{\Omega}\int_{\Omega}
\frac{|u(x)-u(y)|^{p(x,y)}}{\lambda^{p(x,y)}|x-y|^{N+sp(x,y)}}
\,\diff x \diff y <1 \right \}.
$$
Then $W^{s,q(\cdot),p(\cdot,\cdot)}(\Omega)$ endowed with the norm
$$
\norm{u}_{s,q,p,\Omega}:=\norm{u}_{L^{q(\cdot)}(\Omega)}+[u]_{s,p(\cdot,\cdot),\Omega}
$$
is a separable reflexive Banach space (see \cite{Kaufmann,Bahrouni.DCDS2018, Bahrouni.JMAA2018}). On $W^{s,q(\cdot),p(\cdot,\cdot)}(\Omega)$ we shall sometimes work with the norm
$$
\abs{u}_{s,q,p,\Omega}:=\inf \left \{\lambda>0: \widetilde{\rho}\left(\frac{u}{\lambda}\right) <1 \right \},
$$
where $\widetilde{\rho}(u):=\int_{\Omega}\left|u\right|^{q(x)}\diff x+
\int_{\Omega}\int_{\Omega}
\frac{|u(x)-u(y)|^{p(x,y)}}{|x-y|^{N+sp(x,y)}}
\,\diff x\diff y.$  It is not difficult to see that $\abs{\cdot}_{s,q,p,\Omega}$ is an equivalent norm of $\norm{\cdot}_{s,q,p,\Omega}$ with the relation
\begin{equation}\label{equivalent.norms}
\frac{1}{2}\norm{u}_{s,q,p,\Omega}\leq \abs{u}_{s,q,p,\Omega}\leq 2\norm{u}_{s,q,p,\Omega}.
\end{equation}
In what follows, for brevity, in some places we write $p(x)$ instead of $p(x,x)$ and in this sense, $p\in C_+(\ol{\Omega})$. Also, we write $W^{s,p(\cdot,\cdot)}(\Omega)$ instead of $W^{s,p(\cdot),p(\cdot,\cdot)}(\Omega)$ and when the domain $\Omega$ is understood, we just write $\norm{u}_{s,p}$ and $|u|_{s,p}$ instead of $\norm{u}_{s,p,\Omega}$ and $|u|_{s,p,\Omega}$, respectively. The following relations between the norm  $\abs{\cdot}_{s,q,p,\Omega}$ and the modular $\widetilde{\rho}(\cdot)$ can be easily obtained from their definitions.
\begin{proposition} \label{norm-modular2} On $W^{s,q(\cdot),p(\cdot,\cdot)}(\Omega)$ it holds that
		\begin{enumerate}
		\item[\rm{(i)}] for $u\in W^{s,q(\cdot),p(\cdot,\cdot)}(\Omega)\setminus\{0\},$  $\lambda=|u|_{s,q,p,\Omega}$\ if and only if \ $\widetilde{\rho}(\frac{u}{\lambda})=1;$
		\item[\rm{(ii)}] $\widetilde{\rho}(u)>1$ $(=1;\ <1)$ if and only if \ $|u|_{s,q,p,\Omega}>1$ $(=1;\ <1)$,
		respectively.
			\end{enumerate}
Moreover, on $W^{s,p(\cdot,\cdot)}(\Omega)$ it holds that 	
	\begin{enumerate}	
		\item[\rm{(iii)}] if $|u|_{s,p}\geq 1$, then $
		|u|_{s,p}^{p^-}\le \widetilde{\rho}(u)\le
		|u|_{s,p}^{p^+}$;
		\item[\rm{(iv)}] if $|u|_{s,p}<1$, then $
		|u|_{s,p}^{p^+}\le \widetilde{\rho}(u)\le
		|u|_{s,p}^{p^-}$.
	\end{enumerate}
\end{proposition}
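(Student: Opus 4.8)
\textit{Proof proposal.} This proposition is the fractional counterpart of Proposition~\ref{norm-modular}, and I would prove it along the same lines, with the single modular $\rho$ replaced by $\widetilde{\rho}$. The first step is to record the elementary properties of $\widetilde{\rho}$ used below: it is even, $\widetilde{\rho}(0)=0$, it is finite on $W^{s,q(\cdot),p(\cdot,\cdot)}(\Omega)$ by the very definition of the space (here $q^+<\infty$ is used), and it satisfies the strict monotonicity $\widetilde{\rho}(\theta u)<\widetilde{\rho}(u)$ for $\theta\in(0,1)$ and $u\ne0$ --- immediate from $|\theta a|^{r}<|a|^{r}$ ($a\ne0$, $r\ge1$) together with $\int_\Omega|u|^{q(x)}\diff x>0$ --- so that, for fixed $u\ne0$, the map $\varphi_u(\lambda):=\widetilde{\rho}(u/\lambda)$ is strictly decreasing on $(0,\infty)$ and satisfies $\varphi_u(\lambda)\le\widetilde{\rho}(u)$ for $\lambda\ge1$. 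The only point that is not purely algebraic is the continuity of $\varphi_u$ at every $\lambda$ where it is finite: approaching such a $\lambda$ from above, each of the two integrands of $\widetilde{\rho}$ is dominated by its value at $\lambda$; approaching from below, it is dominated by a fixed constant times that value (the constant obtained by bounding the rescaling factors over a compact subinterval of $(0,\infty)$); in both cases dominated convergence applies, exactly as in the proof of Proposition~\ref{norm-modular}.

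With these preliminaries in hand, (i) runs as follows. Fix $u\ne0$ and set $\lambda_0:=|u|_{s,q,p,\Omega}$. Since $\varphi_u$ is decreasing, the set $\{\lambda>0:\varphi_u(\lambda)<1\}$ contains the half-line $(\lambda_0,\infty)$; letting $\mu\searrow\lambda_0$, the monotone convergence theorem (the two integrands increasing pointwise to those of $\widetilde{\rho}(u/\lambda_0)$) gives $\varphi_u(\lambda_0)\le1$, in particular $\varphi_u(\lambda_0)<+\infty$, so $\varphi_u$ is continuous at $\lambda_0$. Were $\varphi_u(\lambda_0)<1$, continuity from the left would produce some $\mu<\lambda_0$ with $\varphi_u(\mu)<1$, contradicting the definition of $\lambda_0$ as an infimum; hence $\varphi_u(\lambda_0)=1$. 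Conversely, if $\varphi_u(\lambda)=1$ for some $\lambda>0$, strict monotonicity gives $\varphi_u(\mu)<1$ for $\mu>\lambda$ and $\varphi_u(\mu)>1$ for $\mu<\lambda$, so that $\lambda_0=\lambda$. Part (ii) is then immediate: for $u=0$ all three equivalences are trivial, and for $u\ne0$, since $t\mapsto\widetilde{\rho}(tv)$ is strictly increasing for $v\ne0$, the identity $\widetilde{\rho}(u)=\widetilde{\rho}\big(\lambda_0\cdot(u/\lambda_0)\big)$ combined with $\widetilde{\rho}(u/\lambda_0)=1$ shows that $\widetilde{\rho}(u)>1$, $=1$, or $<1$ precisely when $\lambda_0>1$, $=1$, or $<1$.

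For (iii) and (iv) I would specialize to $q(\cdot)=p(\cdot,\cdot)$ and combine (i) with the elementary estimate $\min\{t^{p^-},t^{p^+}\}\le t^{r}\le\max\{t^{p^-},t^{p^+}\}$, valid for $t>0$ and $p^-\le r\le p^+$. Writing $\lambda_0:=|u|_{s,p}$ and, in the nontrivial case $u\ne0$, expanding the identity $\widetilde{\rho}(u/\lambda_0)=1$ and applying this estimate with $t=1/\lambda_0$ to the factors $\lambda_0^{-p(x)}$ and $\lambda_0^{-p(x,y)}$ occurring in the two integrals, one obtains $\lambda_0^{-p^+}\widetilde{\rho}(u)\le1\le\lambda_0^{-p^-}\widetilde{\rho}(u)$ when $\lambda_0\ge1$, which rearranges to (iii), and $\lambda_0^{-p^-}\widetilde{\rho}(u)\le1\le\lambda_0^{-p^+}\widetilde{\rho}(u)$ when $\lambda_0<1$, which rearranges to (iv); the borderline value $\lambda_0=1$ and the case $u=0$ are read off directly from (ii). I expect the only genuine obstacle to be the continuity of $\varphi_u$ invoked in (i): it requires passing to the limit under both the single integral over $\Omega$ and the double integral over $\Omega\times\Omega$, and this is precisely where the monotone/dominated convergence arguments indicated above must be carried out carefully; everything else is routine bookkeeping.
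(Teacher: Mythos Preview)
Your argument is correct and follows the standard route for Luxemburg norms: the key facts are that $\varphi_u(\lambda)=\widetilde{\rho}(u/\lambda)$ is finite on $(0,\infty)$ for $u$ in the space, strictly decreasing, and continuous (via monotone or dominated convergence), from which (i)--(ii) follow, and (iii)--(iv) come from the two-sided bound $\min\{\lambda^{-p^-},\lambda^{-p^+}\}\le\lambda^{-r}\le\max\{\lambda^{-p^-},\lambda^{-p^+}\}$ applied inside $\widetilde{\rho}(u/\lambda_0)=1$. The paper itself does not supply a proof, remarking only that the relations ``can be easily obtained from their definitions,'' so your write-up is precisely the routine verification the paper omits; there is nothing to compare beyond that.
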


Our first main result in this section is the next theorem, which refines the result obtained in \cite[Theorem 1.1]{Kaufmann} for a bounded domain $\Omega$.
\begin{theorem}\label{general.imb}
Let $\Omega$ be a bounded Lipschitz domain and let $p, q$ and $s$ be as above. Assume furthermore that $$
sp^+<N \ \ \text{and} \ \ q(x)\ge p(x,x),\quad  \forall x\in \ol\Omega.
$$
Then, it holds that
$$
W^{s,q(\cdot),p(\cdot,\cdot)}(\Omega) \hookrightarrow \hookrightarrow
L^{r(\cdot)}(\Omega)$$ for any $r\in C_+(\ol{\Omega})$ such that
$r(x)<p^*_s(x):=\frac{Np(x,x)}{N-sp(x,x)}$ for all $x\in \ol\Omega$.
\end{theorem}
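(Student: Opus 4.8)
The plan is to deduce the theorem from the constant‑exponent compact embedding in Proposition~\ref{imb.frac.const} by ``freezing the exponent'' around each point and then patching over a finite cover of $\ol\Omega$. First I would localise: fix $x_0\in\ol\Omega$ and set $p_0:=p(x_0,x_0)$. Since $t\mapsto\frac{Nt}{N-st}$ is continuous and increasing on $(0,N/s)$ and $r(x_0)<p_s^*(x_0)$, I can pick a constant $\bar p\in(1,p_0)$ so close to $p_0$ that $r(x_0)<\frac{N\bar p}{N-s\bar p}$; note $s\bar p<sp^+<N$. Then, since $\frac{N\bar p}{N-t\bar p}\to\frac{N\bar p}{N-s\bar p}$ as $t\uparrow s$, I can fix $t\in(0,s)$ so close to $s$ that still $r(x_0)<\bar p^*_t:=\frac{N\bar p}{N-t\bar p}$. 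Finally, using continuity of $p,q,r$ and the Lipschitz regularity of $\Omega$, I would choose a neighbourhood $U=U_{x_0}$ of $x_0$, small enough that $\Omega_U:=\Omega\cap U$ is a bounded Lipschitz domain with $\operatorname{diam}\Omega_U<1$, and that on $\ol{\Omega_U}$ one has $p(x,y)>\bar p$, hence also $q(x)\ge p(x,x)>\bar p$, and $\sup_{\ol{\Omega_U}}r<\bar p^*_t$.

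The heart of the proof is the local estimate that restriction maps $W^{s,q(\cdot),p(\cdot,\cdot)}(\Omega)$ continuously into the constant‑exponent space $W^{t,\bar p}(\Omega_U)$. For the Gagliardo part I would write, for $x\ne y$ in $\Omega_U$,
\[
\frac{|u(x)-u(y)|^{\bar p}}{|x-y|^{N+t\bar p}}=\left(\frac{|u(x)-u(y)|^{p(x,y)}}{|x-y|^{N+sp(x,y)}}\right)^{\bar p/p(x,y)}|x-y|^{\theta(x,y)},\qquad \theta(x,y):=\frac{(N+sp(x,y))\bar p}{p(x,y)}-N-t\bar p,
\]
and apply Young's inequality with the conjugate exponents $\frac{p(x,y)}{\bar p}$ and $\frac{p(x,y)}{p(x,y)-\bar p}$. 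The first factor raised to $\frac{p(x,y)}{\bar p}$ is exactly the variable‑exponent integrand $\frac{|u(x)-u(y)|^{p(x,y)}}{|x-y|^{N+sp(x,y)}}$, while the second contributes $|x-y|^{\theta(x,y)p(x,y)/(p(x,y)-\bar p)}$, whose exponent equals $-N+(s-t)\bar p\,\frac{p(x,y)}{p(x,y)-\bar p}\ge-N+\kappa$ for some $\kappa>0$ uniform on $\Omega_U$ (here I use $\bar p<p(x,y)\le p^+$); since $\operatorname{diam}\Omega_U<1$ this contribution is $\le|x-y|^{-N+\kappa}$, which is integrable on $\Omega_U\times\Omega_U$. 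Integrating and using $\widetilde\rho(u)\ge\int_\Omega\int_\Omega\frac{|u(x)-u(y)|^{p(x,y)}}{|x-y|^{N+sp(x,y)}}\diff x\diff y$ bounds the $W^{t,\bar p}$‑seminorm of $u|_{\Omega_U}$; the $L^{\bar p}(\Omega_U)$ part is controlled by $\|u\|_{L^{q(\cdot)}(\Omega)}$ via Proposition~\ref{est.Deining} because $\bar p\le q$ on $\Omega_U$. Combining these (and using Proposition~\ref{norm-modular2}(ii) to pass between norm and modular on the unit ball) shows $u\mapsto u|_{\Omega_U}$ is a bounded linear map $W^{s,q(\cdot),p(\cdot,\cdot)}(\Omega)\to W^{t,\bar p}(\Omega_U)$.

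With this local estimate I would finish as follows. Fixing a constant $q_*$ with $\sup_{\ol{\Omega_U}}r\le q_*<\bar p^*_t$, Proposition~\ref{imb.frac.const}(i) (applicable since $t\bar p<N$ and $\Omega_U$ is a bounded Lipschitz domain) gives $W^{t,\bar p}(\Omega_U)\hookrightarrow\hookrightarrow L^{q_*}(\Omega_U)$, and Proposition~\ref{est.Deining} gives $L^{q_*}(\Omega_U)\hookrightarrow L^{r(\cdot)}(\Omega_U)$; composing with the restriction map, $u\mapsto u|_{\Omega_U}$ is a compact operator from $W^{s,q(\cdot),p(\cdot,\cdot)}(\Omega)$ into $L^{r(\cdot)}(\Omega_U)$. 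Covering the compact set $\ol\Omega$ by finitely many such neighbourhoods $U_1,\dots,U_m$, a bounded sequence in $W^{s,q(\cdot),p(\cdot,\cdot)}(\Omega)$ admits, after a finite iteration of extractions, a subsequence $(u_{n_k})$ converging in $L^{r(\cdot)}(\Omega_{U_i})$ for every $i$; since $\int_\Omega|u_{n_k}-u_{n_l}|^{r(x)}\diff x\le\sum_{i=1}^m\int_{\Omega_{U_i}}|u_{n_k}-u_{n_l}|^{r(x)}\diff x\to0$, the sequence is Cauchy in modular, hence (as $r^+<\infty$) in $L^{r(\cdot)}(\Omega)$. The same covering, without extracting, shows the embedding is well defined and bounded.

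I expect the main obstacle to be the local estimate, and more precisely the necessity of the two strict inequalities $\bar p<p(x_0,x_0)$ and $t<s$: if either were replaced by an equality the Young‑inequality remainder would be only $|x-y|^{-N}$, which is not integrable near the diagonal, so the strictness must be quantified into a neighbourhood‑uniform surplus $\kappa>0$. It is worth noting that this is exactly where the relaxed hypothesis $q(x)\ge p(x,x)$ suffices in place of the condition $q(x)>p(x,x)$ used in \cite{Kaufmann}: the only use of $q$ is the freedom to take $\bar p<p(x_0,x_0)\le q(x_0)$, so no gap between $q$ and $p$ on the diagonal is needed. A minor technical point is the choice of the sets $U_i$ so that $\Omega\cap U_i$ remains a Lipschitz domain — taking balls compactly contained in $\Omega$ in the interior and coordinate cylinders adapted to the graph of $\partial\Omega$ near the boundary — which is where the Lipschitz regularity of $\Omega$ enters.
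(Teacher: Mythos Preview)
Your proposal is correct and follows the same overall strategy as the paper: localise around each point, freeze the exponent to a constant $\bar p$ (the paper's $p_i$) with a slightly smaller fractional order $t<s$, invoke the constant-exponent compact embedding of Proposition~\ref{imb.frac.const}, and patch over a finite cover. The two arguments differ only in implementation.

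For the key local estimate $W^{s,q(\cdot),p(\cdot,\cdot)}(\Omega)\hookrightarrow W^{t,\bar p}(\Omega_U)$, the paper introduces the weighted measure $\diff\mu=|x-y|^{-N-(t-s)p_i}\diff x\diff y$, rewrites $[u]_{t,p_i,\Omega_i}=\|F\|_{L^{p_i}_\mu}$ with $F(x,y)=|u(x)-u(y)|/|x-y|^s$, and then applies Proposition~\ref{est.Deining} in $L^{\cdot}_\mu$ to pass from $p_i$ to $p(\cdot,\cdot)$; this yields a genuine norm-to-norm inequality without additive constant. Your Young-inequality computation is the pointwise version of the same idea and produces a modular bound with an additive constant, which you then convert to a norm bound by homogeneity on the unit ball --- equally valid, and arguably more transparent since it makes the role of the surplus exponent $\kappa=(s-t)\bar p\,\frac{p^+}{p^+-\bar p}>0$ explicit. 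For the patching, the paper uses a partition of unity $\{\xi_i\}$ and works with $\xi_iu\in W^{t,p_i}(\Omega)$ on the whole domain to obtain the continuous embedding (though it reverts to plain restriction for the compactness step); you dispense with the partition of unity altogether and work with restrictions throughout, which is a genuine simplification --- the partition of unity forces the paper to estimate the commutator terms $|\xi_i(x)-\xi_i(y)|$ in \eqref{Wt,pi.3}, which your approach avoids entirely.
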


\begin{remark}\rm
It is worth pointing out that in existing articles \cite{Kaufmann,Bahrouni.DCDS2018,Bahrouni.JMAA2018} working on $
W^{s,q(\cdot),p(\cdot,\cdot)}(\Omega)$, the function $q$ is actually assumed that $q(x)>p(x,x),$
for all $x\in \ol\Omega$ due to some technical reason. Such spaces are actually not a generalization of the fractional Sobolev space $W^{s,p}(\Omega)$. Our result therefore is an 
improvement of \cite[Theorem 1.1]{Kaufmann}.
\end{remark}

\vspace{0.3cm}
The following proof of Theorem~\ref{general.imb} is based on the idea used in \cite[Proof of Theorem 1.1]{Kaufmann}.
In what follows, denote by $B_\epsilon(x_0)$ the open ball centered at $x_0$ with radius $\epsilon$ in an appropriate Euclidean space $\mathbb{R}^k$ and denote by $|S|$ the Lebesgue measure of $S\subset \mathbb{R}^N.$

\begin{proof}[Proof of Theorem~\ref{general.imb}]
Since $p,q$ and $r$ are continuous on the compact set $\overline{\Omega}$, we have
\begin{equation*}\label{11111}
\alpha:=\min_{x\in\overline{\Omega}}\left[\frac{Np(x,x)}{N-sp(x,x)}-r(x)\right]>0.
\end{equation*}
By the boundedness and the Lipschitz property of $\Omega$, for any given $\epsilon>0$ small enough, we can cover $\overline{\Omega}$ by a finite of balls $\{B_i\}_{i=1}^{m}$ with radius $\epsilon$ such that $\Omega_i:=B_i\cap\Omega$ ($i=1,\cdots,m$) are Lipschitz domains as well. By the continuity of
$p,q$ and $r$ again, we can choose $t\in (0,s)$ and $\e>0$ such that
\begin{equation}\label{subscitical.loc}
p_{t,i}^*:=\frac{Np_i}{N-tp_i}\ge r_i^++\frac{\al}{2}
\end{equation}
and
\begin{equation}\label{iiiiii}
q(x)\ge p(x,x)\ge p_i, \ \forall x\in \Omega_i,
\end{equation}
for all $i\in\{1,\cdots,m\},$ where $p_i:=\inf_{(z,y)\in \Omega_i\times \Omega_i}p(z,y)$ and $r^+_i:=\sup_{x\in \Omega_i}r(x)$.

Let $\{\xi_i\}_{i=1}^m$ be a partition of unity of $\overline{\Omega}$ associated with the covering $\{B_i\}_{i=1}^{m}$ of $\overline{\Omega},$ i.e., for each $i\in\{1,\cdots,m\}$, $\xi_i\in C^\infty(\mathbb{R}^N)$, $0\leq \xi_i \leq 1$, $\operatorname{supp} (\xi_i)\subset B_i,$ and
 $$\sum_{i=1}^{m}\xi_i=1\quad \text{on}\quad \overline{\Omega}.$$
We claim that for all $u\in W^{s,q(\cdot),p(\cdot,\cdot)}(\Omega)$ and all $i\in\{1,\cdots,m\}$, $\xi_iu\in W^{t,p_i}(\Omega),$ i.e.,
\begin{equation}\label{Wt,pi.1}
\int_{\Omega}|\xi_{i}u|^{p_i}\diff x+\int_{\Omega}\int_{\Omega}
\frac{|\xi_i(x)u(x)-\xi_i(y)u(y)|^{p_i}}{|x-y|^{N+tp_i}} \,\diff x \diff y<\infty.
\end{equation}
Indeed, using \eqref{iiiiii} we estimate the first integral in \eqref{Wt,pi.1} as follows:
\begin{align*}\label{Wt,pi.2}
\int_{\Omega}|\xi_{i}u|^{p_i}\diff x=\int_{\Omega_i}|\xi_{i}u|^{p_i}\diff x\leq \int_{\Omega_i}\left(|\xi_{i}u|^{q(x)}+1\right)\diff x\leq \int_{\Omega_i}|u|^{q(x)}\diff x+|\Omega_i|<\infty.
\end{align*}
Then \eqref{Wt,pi.1} follows if we can show that
\begin{equation}\label{Wt,pi.3}
[\xi_iu]_{t,p_i,\Omega}\leq  C_1\round{[u]_{t,p_i,\Omega_i}+\|u\|_{L^{q(\cdot)}(\Omega)}}
	\end{equation}
	and
	\begin{equation}\label{Wt,pi.4}
	[u]_{t,p_i,\Omega_i}\leq  C_2[u]_{s,p(\cdot,\cdot),\Omega},
	\end{equation}
where $C_1$ and $C_2$ are positive constants independent of $u$. To show \eqref{Wt,pi.3}, we first note that
\begin{align*}
&\int_{\Omega}\int_{\Omega}
\frac{|\xi_i(x)u(x)-\xi_i(y)u(y)|^{p_i}}{|x-y|^{N+tp_i}} \,\diff x \diff y\notag\\
&=\int_{\Omega_i}\int_{\Omega_i}
\frac{|\xi_i(x)u(x)-\xi_i(y)u(y)|^{p_i}}{|x-y|^{N+tp_i}} \,\diff x \diff y +2\int_{\Omega\setminus\Omega_i}\int_{\Omega_i}
\frac{|\xi_i(x)u(x)|^{p_i}}{|x-y|^{N+tp_i}} \,\diff x \diff y\notag\\
&\leq2^{p_i}\int_{\Omega_i}\int_{\Omega_i}
\frac{|\xi_i(x)|^{p_i}|u(x)-u(y)|^{p_i}}{|x-y|^{N+tp_i}} \,\diff x \diff y+2^{p_i}\int_{\Omega_i}\int_{\Omega_i}
\frac{|\xi_i(x)-\xi_i(y)|^{p_i}}{|x-y|^{N+tp_i}}|u(y)|^{p_i} \,\diff x \diff y\notag\\
&\quad\quad\quad\quad\quad\quad +2\int_{\Omega_i}\int_{\Omega\setminus\Omega_i}
\frac{|\xi_i(x)-\xi_i(y)|^{p_i}}{|x-y|^{N+tp_i}}| u(x)|^{p_i}\, \diff y\diff x.
\end{align*}
Using the facts that $0\leq \xi_i(x)\leq 1$ and $|\xi_i(x)-\xi_i(y)|\leq \|\nabla\xi_i\|_\infty |x-y|$ for all $x,y\in\Omega$, we deduce from the last inequality that
\begin{align*}
&\int_{\Omega}\int_{\Omega}
\frac{|\xi_i(x)u(x)-\xi_i(y)u(y)|^{p_i}}{|x-y|^{N+tp_i}} \,\diff x \diff y\notag\\
&\leq2^{p_i}\int_{\Omega_i}\int_{\Omega_i}
\frac{|u(x)-u(y)|^{p_i}}{|x-y|^{N+tp_i}} \,\diff x \diff y+2^{p_i}\|\nabla\xi_i\|_\infty^{p_i}\int_{\Omega_i}\left(\int_{\Omega_i}
|x-y|^{-N+(1-t)p_i} \,\diff x\right) |u(y)|^{p_i} \diff y\notag\\
&\quad +2\|\nabla\xi_i\|_\infty^{p_i}\int_{\Omega_i}\left(\int_{\Omega\setminus\Omega_i}
|x-y|^{-N+(1-t)p_i} \,\diff y\right)|u(x)|^{p_i}\, \diff x\notag\\
&\leq2^{p_i}\int_{\Omega_i}\int_{\Omega_i}
\frac{|u(x)-u(y)|^{p_i}}{|x-y|^{N+tp_i}} \,\diff x \diff y+2^{p_i}\|\nabla\xi_i\|_\infty^{p_i}\int_{\Omega_i}\left(\int_{\Omega}
|x-y|^{-N+(1-t)p_i} \,\diff y\right) |u(x)|^{p_i} \diff x.
\end{align*}
Note that
\begin{equation*}
\sup_{x\in\Omega}\ \int_{\Omega}
|x-y|^{-N+(1-t)p_i} \,\diff y\leq \int_{B(0,R)}
|z|^{-N+(1-t)p_i}\diff z\leq \frac{\omega_N R^{(1-t)p_i}}{(1-t)p_i},
\end{equation*}
where $B(0,R)\supset \Omega-\Omega,$ and $\omega_N$ denotes the area of the unit sphere in $\mathbb{R}^N.$  On the other hand, applying Proposition~\ref{est.Deining} with noting $p_i\leq q(x)$ for all $x\in\Omega_i$ we have
\begin{equation}\label{Wt,pi.4'}
\norm{u}_{L^{p_{i}}(\Omega_i)}\leq 2\left(1+|\Omega_i|\right)\norm{u}_{L^{q(\cdot)}(\Omega_i)}\leq 2\left(1+|\Omega|\right)\norm{u}_{L^{q(\cdot)}(\Omega)}.
\end{equation}
From the last three inequalities, we obtain \eqref{Wt,pi.3}.

To obtain \eqref{Wt,pi.4}, we consider the measure $\mu$ in $\mathbb{R}^N\times \mathbb{R}^N$ with $\diff \mu (x,y):=\frac{\diff x\diff y}{|x-y|^{N+(t-s)p_i}}$ and set
$$ F(x,y):=\frac{|u(x)-u(y)|}{|x-y|^s},\quad \forall x,y\in\mathbb{R}^N,\ x\ne y.$$
Invoking Proposition~\ref{est.Deining}, we have
\begin{align}\label{Wt,pi.5}
[u]_{t,p_i,\Omega_i}&=\round{\int_{\Omega_i}\int_{\Omega_i}
	\frac{|u(x)-u(y)|^{p_i}}{|x-y|^{N+tp_i+sp_i-sp_i}}
	\,\diff x\diff y}^{\frac{1}{p_i}}\notag\\
&=\round{\int_{\Omega_i}\int_{\Omega_i} \round{\frac{|u(x)-u(y)|}{|x-y|^{s}}}^{p_i}
	\,\frac{\diff x\diff y}{|x-y|^{N+(t-s)p_i}}}^{\frac{1}{p_i}}\notag\\
&=\norm{F}_{L_\mu^{p_i}(\Omega_i\times \Omega_i)}\notag\\
&\le 2\left[1+\mu(\Omega_i\times \Omega_i)\right]\norm{F}_{L_\mu^{p(\cdot,\cdot)}(\Omega_i\times \Omega_i)}.
\end{align}
Hence, to obtain \eqref{Wt,pi.4} it suffices to prove that
\begin{equation}\label{Wt,pi.6}
\norm{F}_{L_\mu^{p(\cdot,\cdot)}(\Omega_i\times \Omega_i)}\le
K[u]_{s,p(\cdot,\cdot),\Omega},
\end{equation}
where $K:=\max\left\{1,\sup_{(x,y)\in \Omega\times \Omega}|x-y|^{s-t}\right\}\in [1,\infty)$. To this end, let $\la>0$ be such that
\begin{equation}\label{66666}
\int_{\Omega}\int_{\Omega}
\frac{|u(x)-u(y)|^{p(x,y)}}{\la^{p(x,y)}|x-y|^{N+sp(x,y)}} \,\diff x\diff y<1.
\end{equation}
Then, for $\widetilde{\la}:=K\la$ we have
\begin{align*}
&\int_{\Omega_i}\int_{\Omega_i}
\round{\frac{|u(x)-u(y)|}{\widetilde{\la}|x-y|^{s}}}^{p(x,y)}
\,\diff \mu (x,y)\\
&=\int_{\Omega_i}\int_{\Omega_i}\frac{|x-y|^{(s-t)p_i}}{K^{p(x,y)}}
\frac{|u(x)-u(y)|^{p(x,y)}}{\la^{p(x,y)}|x-y|^{N+sp(x,y)}}
\,\diff x\diff y\\
&\le\int_{\Omega_i}\int_{\Omega_i}
\frac{|u(x)-u(y)|^{p(x,y)}}{\la^{p(x,y)}|x-y|^{N+sp(x,y)}}
\,\diff x\diff y < 1.
\end{align*}
Thus,
\begin{equation}\label{77777}
\norm{F}_{L_\mu^{p(\cdot,\cdot)}(\Omega_i\times \Omega_i)}\le \widetilde{\la}=K\la .
\end{equation}
Taking infimum over all $\la>0$ satisfying \eqref{66666}, we get \eqref{Wt,pi.6}
from \eqref{77777}. That is, we have proved \eqref{Wt,pi.3} and \eqref{Wt,pi.4}, and therefore,  $\xi_iu\in W^{t,p_i}(\Omega)$ for all $u\in W^{s,q(\cdot),p(\cdot,\cdot)}(\Omega)$ and all $i\in\{1,\cdots,m\}$. Hence, for each $i\in\{1,\cdots,m\}$, there exists a constant $C_{imb}^{(i)}=C_{imb}^{(i)}(N,p,t,\e,B_i)$ such that
\begin{equation}\label{33333}
\norm{\xi_iu}_{L^{p_{t,i}^*}(\Omega)}\le
C_{imb}^{(i)}\round{\norm{\xi_iu}_{L^{p_{i}}(\Omega)}+[\xi_iu]_{t,p_i,\Omega}},\quad \forall u\in W^{s,q(\cdot),p(\cdot,\cdot)}(\Omega)
\end{equation}
in view of Proposition~\ref{imb.frac.const}. We also note that $p_i\leq q(x)$ for all $x\in\Omega_i$, and hence, we have
\begin{equation}\label{33334}
\norm{\xi_iu}_{L^{p_{i}}(\Omega)}=\norm{\xi_iu}_{L^{p_{i}}(\Omega_i)}\leq 2\left(1+|\Omega_i|\right)\norm{\xi_iu}_{L^{q(\cdot)}(\Omega_i)}\leq 2\left(1+|\Omega|\right)\norm{u}_{L^{q(\cdot)}(\Omega)}
\end{equation}
in view of Proposition~\ref{est.Deining}.

We are now in a position to prove the imbedding $
W^{s,q(\cdot),p(\cdot,\cdot)}(\Omega) \hookrightarrow
L^{r(\cdot)}(\Omega).$ That is, we shall prove that there
exists $C>0$ such that
\begin{equation}\label{44444}
\norm{u}_{L^{r(\cdot)}(\Omega)}\le C\norm{u}_{s,q,p,\Omega}, \quad \forall u\in W^{s,q(\cdot),p(\cdot,\cdot)}(\Omega).
\end{equation}
In order to do this, we prove that there exist $C_3,C_4>0$ such that
\begin{equation}\label{aaaaa}
\norm{u}_{L^{r(\cdot)}(\Omega)}\leq C_3\sum_{i=1}^m\norm{\xi_iu}_{L^{p^*_{t,i}}(\Omega)}, \ \forall u\in W^{s,q(\cdot),p(\cdot,\cdot)}(\Omega)
\end{equation}
and
\begin{equation}\label{bbbbb}
\sum_{i=1}^m[\xi_iu]_{t,p_i,\Omega}\leq C_4\norm{u}_{s,q,p,\Omega}, \ \forall u\in
W^{s,q(\cdot),p(\cdot,\cdot)}(\Omega).
\end{equation}
Then, it is easy to see that \eqref{44444} follows from \eqref{33333}, \eqref{33334}, \eqref{aaaaa} and \eqref{bbbbb}.

To see \eqref{aaaaa}, we note that $ u=\sum_{i=1}^{m}\xi_i u \ \text{in}\ \Omega,
$ and hence,
\begin{equation}\label{55555}
\norm{u}_{L^{r(\cdot)}(\Omega)}\le
\sum_{i=1}^{m}\norm{\xi_i u}_{L^{r(\cdot)}(\Omega)}.
\end{equation}
Taking \eqref{subscitical.loc} and Proposition~\ref{est.Deining} into account we have
$$
\norm{\xi_iu}_{L^{r(\cdot)}(\Omega)}=\norm{\xi_iu}_{L^{r(\cdot)}(\Omega_i)}\le
2\left(1+|\Omega_i|\right)\norm{\xi_i u}_{L^{p^*_{t,i}}(\Omega_i)}.
$$
Combining this with \eqref{55555}, we deduce \eqref{aaaaa}. The inequality \eqref{bbbbb} is easily obtained from \eqref{Wt,pi.3} and \eqref{Wt,pi.4}. Thus, we have just obtained
the imbedding
$$
W^{s,q(\cdot),p(\cdot,\cdot)}(\Omega) \hookrightarrow
L^{r(\cdot)}(\Omega).
$$
Next, we show the compactness of the above imbedding. Let
$u_n\rightharpoonup 0$ in $W^{s,q(\cdot),p(\cdot,\cdot)}(\Omega)$. 
Thus $u_n\rightharpoonup 0$ in $W^{t,p_i}(\Omega_i)$ for all
$i\in\{1,\cdots , m\}$ due to $
W^{s,q(\cdot),p(\cdot,\cdot)}(\Omega) \hookrightarrow
W^{t,p_i}(\Omega_i)$  (see \eqref{Wt,pi.4} and \eqref{Wt,pi.4'}). 
Applying Proposition~\ref{imb.frac.const} with taking \eqref{subscitical.loc} into account, we deduce
$$
u_n \to 0 \quad \text{ in } \quad L^{r_i^{+}}(\Omega_i),
$$
and hence
$$
u_n \to 0 \quad \text{ in } \quad L^{r(\cdot)}(\Omega_i)\quad\text{for all}\ i\in\{1,\cdots,m\}.
$$
This yields
$$
u_n \to 0 \quad \text{ in } \quad L^{r(\cdot)}(\Omega).
$$
That is, we have proved that
$$
W^{s,q(\cdot),p(\cdot,\cdot)}(\Omega) \hookrightarrow \hookrightarrow
L^{r(\cdot)}(\Omega).
$$
The proof is complete.
\end{proof}


Since $
W^{s,q(\cdot),p(\cdot,\cdot)}(\Omega) \hookrightarrow
W^{s,p(\cdot,\cdot)}(\Omega)
$ if $\Omega$ is bounded and $p(x,x)\leq q(x)$ for all $x\in\ol{\Omega},$ the above result has the following important consequence.

\begin{corollary}\label{special.imb}
Let $\Omega$ be a bounded Lipschitz domain and let $s,p$ be as above such that $sp^+<N.$ Then, for any $r\in C_+(\ol{\Omega})$ such that $r(x)<p^*_s(x)$ for all $x\in \ol\Omega,$	
$$ W^{s,p(\cdot,\cdot)}(\Omega) \hookrightarrow \hookrightarrow L^{r(\cdot)}(\Omega).$$

\end{corollary}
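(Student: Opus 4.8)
The plan is to deduce Corollary~\ref{special.imb} directly from Theorem~\ref{general.imb} by choosing the exponent $q$ appropriately. The key observation is that $W^{s,p(\cdot,\cdot)}(\Omega)$ is by definition $W^{s,p(\cdot),p(\cdot,\cdot)}(\Omega)$, i.e.\ the space $W^{s,q(\cdot),p(\cdot,\cdot)}(\Omega)$ with the particular choice $q(x)=p(x,x)$ for all $x\in\ol\Omega$. With this choice, the hypothesis $q(x)\ge p(x,x)$ of Theorem~\ref{general.imb} is trivially satisfied (in fact with equality), and the remaining hypothesis $sp^+<N$ is exactly what we have assumed. Hence Theorem~\ref{general.imb} applies verbatim and yields
$$
W^{s,p(\cdot,\cdot)}(\Omega)=W^{s,p(\cdot),p(\cdot,\cdot)}(\Omega)\hookrightarrow\hookrightarrow L^{r(\cdot)}(\Omega)
$$
for any $r\in C_+(\ol\Omega)$ with $r(x)<p^*_s(x)$ for all $x\in\ol\Omega$, which is precisely the assertion.

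An alternative, slightly more general route—which is the one hinted at in the sentence preceding the corollary—is to first record the elementary continuous inclusion
$$
W^{s,q(\cdot),p(\cdot,\cdot)}(\Omega)\hookrightarrow W^{s,p(\cdot,\cdot)}(\Omega)
$$
whenever $\Omega$ is bounded and $p(x,x)\le q(x)$ for all $x\in\ol\Omega$. This inclusion follows because the Gagliardo seminorm $[\cdot]_{s,p(\cdot,\cdot),\Omega}$ is common to both spaces, while for the Lebesgue part one has $\norm{u}_{L^{p(\cdot)}(\Omega)}\le 2(1+|\Omega|)\norm{u}_{L^{q(\cdot)}(\Omega)}$ by Proposition~\ref{est.Deining} applied with $\alpha=p(\cdot,\cdot)|_{\text{diagonal}}$ and $\beta=q$. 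Combining this inclusion (used in the reverse direction: take $q=p(\cdot,\cdot)$ on the diagonal so the two spaces coincide) with Theorem~\ref{general.imb} again gives the result. In practice the cleanest presentation is simply to specialize $q(x):=p(x,x)$ in Theorem~\ref{general.imb}, for which the inclusion is an equality and nothing further is needed.

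There is no real obstacle here: the corollary is a direct specialization of the theorem, and the only point to check is that the hypotheses of Theorem~\ref{general.imb} reduce to $sp^+<N$ once $q$ is taken to be $p$ on the diagonal, which is immediate. The proof is therefore a single short paragraph invoking Theorem~\ref{general.imb} with $q(\cdot)=p(\cdot,\cdot)$.
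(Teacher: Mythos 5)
Your proposal is correct and coincides with the paper's (implicit) derivation: since $W^{s,p(\cdot,\cdot)}(\Omega)=W^{s,p(\cdot),p(\cdot,\cdot)}(\Omega)$, the corollary is exactly Theorem~\ref{general.imb} specialized to $q(x)=p(x,x)$, for which the hypothesis $q(x)\ge p(x,x)$ holds with equality and only $sp^+<N$ remains to check. The paper treats this as immediate (its preceding remark about the inclusion $W^{s,q(\cdot),p(\cdot,\cdot)}(\Omega)\hookrightarrow W^{s,p(\cdot,\cdot)}(\Omega)$ only underlines why this special case is the essential one), so nothing further is needed.
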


When $\Omega=\mathbb{R}^N$, we have the following imbeddings.
\begin{theorem}\label{Theo.imb.RN}
	Let $s\in (0,1).$ Let $p\in C_+(\mathbb{R}^N\times \mathbb{R}^N)$ be a uniformly continuous and symmetric function such that $sp^+<N$. Then, it holds that
	\begin{itemize}
		\item [(i)] $W^{s,p(\cdot,\cdot)}(\mathbb{R}^N)\hookrightarrow L^{r(\cdot)}(\mathbb{R}^N)$  for any uniform continuous function  $r\in C_+(\mathbb{R}^N)$ satisfying $p(x,x)\leq r(x)$ for all $x\in\mathbb{R}^N$ and $\inf_{x\in\mathbb{R}^N}(p_s^\ast(x)-r(x))>0$;
		\item [(ii)] $W^{s,p(\cdot,\cdot)}(\mathbb{R}^N)\hookrightarrow\hookrightarrow L_{\loc}^{r(\cdot)}(\mathbb{R}^N)$ for any $r\in C_+(\mathbb{R}^N)$ satisfying  $r(x)< p_s^\ast(x)$ for all $x\in\mathbb{R}^N.$
	\end{itemize}
\end{theorem}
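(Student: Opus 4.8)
The plan is to obtain part (ii) directly from Corollary~\ref{special.imb} and to prove part (i) by transporting the argument of Theorem~\ref{general.imb} to a countable, uniformly locally finite covering of $\mathbb{R}^N$, taking care that every constant is independent of the covering ball. For part (ii), fix $k\in\N$ and set $B_k:=\{x\in\mathbb{R}^N:|x|<k\}$, a bounded Lipschitz domain on which $p|_{\ol{B_k}\times\ol{B_k}}$ and $r|_{\ol{B_k}}$ satisfy the hypotheses of Corollary~\ref{special.imb} (symmetry, $1<p^-\le p^+<\infty$, $s\sup_{\ol{B_k}\times\ol{B_k}}p\le sp^+<N$, and $r(x)<p_s^*(x)$ on $\ol{B_k}$), so that $W^{s,p(\cdot,\cdot)}(B_k)\hookrightarrow\hookrightarrow L^{r(\cdot)}(B_k)$. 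Since the restriction $W^{s,p(\cdot,\cdot)}(\mathbb{R}^N)\to W^{s,p(\cdot,\cdot)}(B_k)$ is norm-nonincreasing — both $\int_{B_k}|u|^{p(x)}\diff x$ and $\int_{B_k}\int_{B_k}\frac{|u(x)-u(y)|^{p(x,y)}}{|x-y|^{N+sp(x,y)}}\diff x\diff y$ are dominated by their counterparts over $\mathbb{R}^N$ — the continuous inclusion $W^{s,p(\cdot,\cdot)}(\mathbb{R}^N)\hookrightarrow L_{\loc}^{r(\cdot)}(\mathbb{R}^N)$ follows (for each compact $K\subset B_k$, $\|u\|_{L^{r(\cdot)}(K)}\le\|u\|_{L^{r(\cdot)}(B_k)}\le C_k\|u\|_{s,p,\mathbb{R}^N}$), and a diagonal extraction over $k=1,2,\dots$ shows that a bounded sequence in $W^{s,p(\cdot,\cdot)}(\mathbb{R}^N)$ has a subsequence converging in $L^{r(\cdot)}(B_k)$ for every $k$, hence in $L^{r(\cdot)}(K)$ for every compact $K\subset\mathbb{R}^N$; this is the claimed compact embedding.

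For part (i), put $\delta:=\inf_{x\in\mathbb{R}^N}\bigl(p_s^*(x)-r(x)\bigr)>0$. Using the uniform continuity of $p$ — hence of $x\mapsto p_s^*(x)$, since $\tau\mapsto N\tau/(N-s\tau)$ is Lipschitz on $[p^-,p^+]\subset(0,N/s)$ — and of $r$, first fix $\epsilon\in(0,\tfrac12)$ and then $t\in(0,s)$ close to $s$ so that, for every ball $B$ of radius $\epsilon$, with $p_B:=\inf_{B\times B}p$ and $p_{t,B}^*:=Np_B/(N-tp_B)$,
$$
\sup_{x\in B}r(x)\ \le\ p_{t,B}^*-\tfrac{\delta}{2}\qquad\text{and}\qquad\sup_{B\times B}p\ \le\ p_{t,B}^*;
$$
both are achievable uniformly in $B$ because, by uniform continuity, $\sup_{B\times B}p\le\sup_{x\in B}r(x)+o(1)$ (using $p(x,x)\le r(x)$) while $p_{t,B}^*\ge p_s^*(x_B)-o(1)\ge\sup_{x\in B}r(x)+\delta-o(1)$ as $\epsilon\to0$ and $t\to s$. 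Note also $p_B\le p(x,x)\le r(x)$ for $x\in B$. Cover $\mathbb{R}^N$ by congruent open balls $\{B_i\}_{i\in\N}$ of radius $\epsilon$ with overlap at most $M=M(N)$, and take a subordinate smooth partition of unity $\{\xi_i\}$ with $0\le\xi_i\le1$, $\sum_i\xi_i\equiv1$, $\operatorname{supp}\xi_i$ a compact subset of $B_i$ at distance $\ge\epsilon/4$ from $\partial B_i$, and $\|\nabla\xi_i\|_\infty\le c_0/\epsilon$, all uniform in $i$; write $p_i:=p_{B_i}$, $p_{t,i}^*:=p_{t,B_i}^*$. Exactly as in the proof of Theorem~\ref{general.imb} — with $\Omega_i$ replaced by $B_i$, $\Omega$ by $\mathbb{R}^N$, and all constants uniform in $i$ since the $B_i$ are congruent and $p_i\in[p^-,p^+]$ — one obtains
$$
[\xi_iu]_{t,p_i,\mathbb{R}^N}\le C_1\bigl([u]_{t,p_i,B_i}+\|u\|_{L^{p(\cdot)}(B_i)}\bigr),\qquad [u]_{t,p_i,B_i}\le C_2\,[u]_{s,p(\cdot,\cdot),B_i};
$$
the only modifications are that the long-range commutator terms are now controlled via $\operatorname{supp}\xi_i$ being at distance $\ge\epsilon/4$ from $\partial B_i$, and that the second estimate, got from the finite measure $\diff\mu(x,y)=|x-y|^{-N-(t-s)p_i}\diff x\diff y$ on $B_i\times B_i$ together with Proposition~\ref{est.Deining} and $|x-y|<2\epsilon<1$, is kept \emph{local}, i.e. it bounds $[u]_{t,p_i,B_i}$ by the local seminorm $[u]_{s,p(\cdot,\cdot),B_i}$. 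Proposition~\ref{P-S.inequality} applied to the compactly supported $\xi_iu$ then gives $\|\xi_iu\|_{L^{p_{t,i}^*}(\mathbb{R}^N)}\le C_3\,[\xi_iu]_{t,p_i,\mathbb{R}^N}$ with $C_3$ uniform ($p_i\in[p^-,p^+]$, $tp_i<sp^+<N$).

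It remains to assemble these local estimates into $\|u\|_{L^{r(\cdot)}(\mathbb{R}^N)}\le C\|u\|_{s,p}$, which, the cover being infinite, must be done with modulars. Since $p(x)\le r(x)\le p_{t,i}^*$ on $B_i$, the pointwise bound $|\xi_iu(x)|^{r(x)}\le|\xi_iu(x)|^{p(x)}+|\xi_iu(x)|^{p_{t,i}^*}$ holds there (the small-value range uses $r(x)\ge p(x)$, the large-value range uses $r(x)\le p_{t,i}^*$), so $\int_{B_i}|\xi_iu|^{r(x)}\diff x\le\int_{B_i}|u|^{p(x)}\diff x+\|\xi_iu\|_{L^{p_{t,i}^*}(\mathbb{R}^N)}^{p_{t,i}^*}$. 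Since at each $x$ some $\xi_i(x)\ge1/M$, $|u(x)|^{r(x)}\le M^{r^+}\sum_i|\xi_iu(x)|^{r(x)}$; summing over $i$ and normalizing so that $\widetilde\rho(u)\le1$, the first terms contribute $\le M^{r^+}\sum_i\int_{B_i}|u|^{p(x)}\diff x\le M^{r^++1}\widetilde\rho(u)$ by bounded overlap, while, raising the estimates of the previous paragraph to the power $p_{t,i}^*$ and using that all the local moduli are $\le1$ and that $p_{t,i}^*$ dominates $\sup_{B_i\times B_i}p$ (hence also $\sup_{x\in B_i}p(x,x)$) — so that the norm–modular relations of Propositions~\ref{norm-modular} and~\ref{norm-modular2} turn powers of local norms back into local moduli — one gets $\|\xi_iu\|_{L^{p_{t,i}^*}(\mathbb{R}^N)}^{p_{t,i}^*}\le C\bigl(\int_{B_i}\int_{B_i}\tfrac{|u(x)-u(y)|^{p(x,y)}}{|x-y|^{N+sp(x,y)}}\diff x\diff y+\int_{B_i}|u|^{p(x)}\diff x\bigr)$, whose sum over $i$ is $\le C(M^2+M)\widetilde\rho(u)$. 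Hence $\int_{\mathbb{R}^N}|u|^{r(x)}\diff x\le C\widetilde\rho(u)\le C$ whenever $\widetilde\rho(u)\le1$, i.e. $\|u\|_{L^{r(\cdot)}(\mathbb{R}^N)}\le C'$ on $\{\,|u|_{s,p}\le1\,\}$; the inclusion being linear, this yields $\|u\|_{L^{r(\cdot)}(\mathbb{R}^N)}\le C'|u|_{s,p}\le2C'\|u\|_{s,p}$ for all $u$ (using~\eqref{equivalent.norms}), which is part (i). The step I expect to be the real obstacle is precisely this last reassembly: it closes only because the Gagliardo term is critical (so its local moduli sum, by bounded overlap, to the global one), because $p(x,x)\le r(x)$ lets one split off the small-value part against $|\xi_iu|^{p(x)}$, and because the oscillation control of $p$ permits arranging $\sup_{B_i\times B_i}p\le p_{t,i}^*$; keeping every constant uniform in $i$ is what forces the covering by congruent balls, the uniformly controlled partition of unity, the strict inequality $sp^+<N$, and the uniform continuity of $p$ and $r$.
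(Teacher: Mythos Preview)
Your argument is correct and your treatment of part (ii) coincides with the paper's. For part (i), however, your route differs in the technical machinery. The paper decomposes $\mathbb{R}^N$ into congruent cubes $Q_i$, does \emph{not} use a partition of unity, and instead obtains a uniform local embedding $\|v\|_{L^{r(\cdot)}(Q_i)}\le C|v|_{s,p,Q_i}$ by invoking an extension operator (as in \cite[Theorem~5.4]{DPV}) and tracking the constant in Proposition~\ref{P-S.inequality} explicitly; the passage from local norms to summable local modulars is then handled by a two-case split on whether $\|v\|_{L^{r(\cdot)}(Q_i)}\ge1$ or $<1$, after normalizing to $\widetilde\rho(v)=1$, and the final embedding is concluded via the closed graph theorem. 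Your version trades the extension step for a partition of unity, applies Proposition~\ref{P-S.inequality} directly to the compactly supported $\xi_iu$, and replaces the case split by the elementary pointwise inequality $|a|^{r(x)}\le|a|^{p(x)}+|a|^{p_{t,i}^*}$. The paper's approach is lighter in that it avoids the long-range commutator terms and the bookkeeping of the $\xi_i$; your approach buys an explicit norm inequality (no closed graph theorem) and sidesteps the extension argument, at the price of needing the additional oscillation control $\sup_{B_i\times B_i}p\le p_{t,i}^*$ to close the norm-to-modular step.
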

\begin{proof}
	(i) It suffices to prove for the case $\inf_{x\in\mathbb{R}^N}(r(x)-p(x,x))>0.$ Decompose $\mathbb{R}^N$ by cubes $Q_i\ (i=1,2,\cdots)$ with sides of length $\epsilon>0$ and parallel to the coordinate axes. By the uniform continuity of $p$ and $r$, we can choose $\epsilon$ sufficiently small and $t\in (0,s)$ such that
	$$p_i^-\leq r_i^-\leq r_i^+\leq (p_i^-)_t^\ast\quad \text{on each}\quad Q_i,$$
	where $p_i^-:=\inf_{(x,y)\in Q_i\times Q_i}p(x,y),\ r_i^-:=\inf_{x\in Q_i}r(x),$ and $ r_i^+:=\sup_{x\in Q_i}r(x).$ Let $u\in W^{s,p(\cdot,\cdot)}(\mathbb{R}^N)\setminus \{0\}.$ Set $v:=\frac{u}{|u|_{s,p,\mathbb{R}^N}}.$ Then, by Proposition~\ref{norm-modular2} we have
	\begin{equation}\label{P.T.v}
\int_{\mathbb{R}^N}|v|^{p(x)}\diff x+
\int_{\mathbb{R}^N}\int_{\mathbb{R}^N}
\frac{|v(x)-v(y)|^{p(x,y)}}{|x-y|^{N+sp(x,y)}}
\,\diff x\diff y =1.
	\end{equation}
From this, we have
\begin{equation}\label{P.T.norm.Qi}
|v|_{s,p,Q_i}\leq 1,\quad \forall i\in\mathbb{N}.
\end{equation}		
We claim that there exists a positive constant $C$ such that
\begin{equation}\label{P.T.loc.const}
\|v\|_{L^{r(\cdot)}(Q_i)}\leq C |v|_{s,p,Q_i},\quad \forall i\in\mathbb{N}.
\end{equation}
Here and in the rest of the proof, the positive constant $C$ may vary from line to line but depends only on $p^+,p^-,s,t,\epsilon$ and $Q_0,$ where $Q_0$ denotes the cube centered at the origin with sides of length $\epsilon>0$ and parallel to the coordinate axes. We just sketch an idea how to obtain \eqref{P.T.loc.const}. 
Invoking Proposition~\ref{est.Deining}, we have
\begin{equation}\label{prf.theo3.5.est1}
\|v\|_{L^{r(\cdot)}(Q_i)}\leq C\|v\|_{L^{r_i^+}(Q_i)}.
\end{equation}
On the other hand, arguing as in that obtained \eqref{Wt,pi.5} and \eqref{Wt,pi.6} we get
\begin{equation}\label{prf.theo3.5.est2}
\|v\|_{t,p^-_i,Q_i}\leq C\|v\|_{s,p,Q_i}\leq 2C|v|_{s,p,Q_i}.
\end{equation}
Arguing as in \cite[Proof Theorem 5.4]{DPV} and using a translation, we can find an extension $\widetilde{v}\in W^{t,p_i^-}(\mathbb{R}^N)$  with compact support in $\mathbb{R}^N$ such that $\widetilde{v}=v$ on $Q_i$, and
\begin{equation}\label{prf.theo3.5.est3}
\|\widetilde{v}\|_{t,p_i^-,\mathbb{R}^N}\leq C\|v\|_{t,p_i^-,Q_i}.
\end{equation}
Finally, we apply Proposition~\ref{P-S.inequality} for $W^{t,p_i^-}(\mathbb{R}^N)$. Note that a careful inspection of the proof of \cite[Theorem 6.5]{DPV} shows that we can choose the constant $C$ in Proposition~\ref{P-S.inequality} as $C=sp\ \omega_N^{\frac{N+sp}{N}}2^{p+p_s^\ast}.$ Applying this result we obtain
\begin{equation*}
\|\widetilde{v}\|^{p_i^-}_{L^{(p_i^-)_t^\ast}(\mathbb{R}^N)}\leq tp_i^-\ \omega_N^{\frac{N+tp_i^-}{N}}2^{p_i^-+(p_i^-)_t^\ast}\|\widetilde{v}\|_{t,p_i^-,\mathbb{R}^N}.
\end{equation*}
This yields
\begin{equation*}
\|v\|_{L^{r_i^+}(Q_i)}\leq C \|\widetilde{v}\|_{t,p_i^-,\mathbb{R}^N}.
\end{equation*}
Combining this with \eqref{prf.theo3.5.est1}-\eqref{prf.theo3.5.est3}
we obtain \eqref{P.T.loc.const}.

We next consider two cases.
\begin{itemize}
	\item Case $\|v\|_{L^{r(\cdot)}(Q_i)}\geq 1.$ Then, invoking Propositions~\ref{norm-modular} and \ref{norm-modular2} with taking \eqref{P.T.norm.Qi} into account, we have
\end{itemize}
\begin{align*}
\int_{Q_i}|v|^{r(x)}\diff x\leq \|v\|_{L^{r(\cdot)}(Q_i)}^{r_i^+}&\leq C^{r_i^+}|v|_{s,p,Q_i}^{r_i^+}\notag\\
&\leq C^{r_i^+}\left(\int_{Q_i}|v|^{p(x)}\diff x+
\int_{Q_i}\int_{Q_i}
\frac{|v(x)-v(y)|^{p(x,y)}}{|x-y|^{N+sp(x,y)}}
\,\diff x\diff y\right)^{\frac{r_i^+}{p_i^+}}\notag\\
&\leq C^{r_i^+}\left(\int_{Q_i}|v|^{p(x)}\diff x+
\int_{Q_i}\int_{Q_i}
\frac{|v(x)-v(y)|^{p(x,y)}}{|x-y|^{N+sp(x,y)}}
\,\diff x\diff y\right).
\end{align*}
\begin{itemize}
	\item Case $\|v\|_{L^{r(\cdot)}(Q_i)}< 1.$ Then, invoking Propositions~\ref{norm-modular} and \ref{norm-modular2} with taking \eqref{P.T.norm.Qi} into account again, we have
\end{itemize}
\begin{align*}
\int_{Q_i}|v|^{r(x)}\diff x\leq \|v\|_{L^{r(\cdot)}(Q_i)}^{r_i^-}&\leq C^{r_i^-}|v|_{s,p,Q_i}^{r_i^-}\notag\\
&\leq C^{r_i^-}\left(\int_{Q_i}|v|^{p(x)}\diff x+
\int_{Q_i}\int_{Q_i}
\frac{|v(x)-v(y)|^{p(x,y)}}{|x-y|^{N+sp(x,y)}}
\,\diff x\diff y\right)^{\frac{r_i^-}{p_i^+}}\notag\\
&\leq C^{r_i^-}\left(\int_{Q_i}|v|^{p(x)}\diff x+
\int_{Q_i}\int_{Q_i}
\frac{|v(x)-v(y)|^{p(x,y)}}{|x-y|^{N+sp(x,y)}}
\,\diff x\diff y\right).
\end{align*}
It follows that in any case, for any $i\in\mathbb{N}$ we have
\begin{equation*}
\int_{Q_i}|v|^{r(x)}\diff x\leq \left(C^{r^-}+C^{r^+}\right)\left(\int_{Q_i}|v|^{p(x)}\diff x+
\int_{Q_i}\int_{Q_i}
\frac{|v(x)-v(y)|^{p(x,y)}}{|x-y|^{N+sp(x,y)}}
\,\diff x\diff y\right).
\end{equation*}
Summing up the last inequality over all $i\in\mathbb{N}$, combining with \eqref{P.T.v}, we obtain
\begin{equation*}
\int_{\mathbb{R}^N}|v|^{r(x)}\diff x\leq C^{r^-}+C^{r^+}.
\end{equation*}
Thus, $W^{s,p(\cdot,\cdot)}(\mathbb{R}^N)\subset L^{r(\cdot)}(\mathbb{R}^N)$ and hence, $W^{s,p(\cdot,\cdot)}(\mathbb{R}^N)\hookrightarrow L^{r(\cdot)}(\mathbb{R}^N)$ due to the closed graph theorem. The proof of assertion (i) is complete.

\vspace{0.3cm} (ii) Let $B$ be  any ball in $\mathbb{R}^N$. Let  $u_n\rightharpoonup 0$ in $W^{s,p(\cdot,\cdot)}(\mathbb{R}^N)$ and
thus $u_n\rightharpoonup 0$ in $W^{s,p(\cdot,\cdot)}(B).$ Invoking Theorem~\ref{general.imb} we have $u_n\to 0$ in $L^{r(\cdot)}(B).$ The proof is complete.
	\end{proof}

\section{A-priori bounds for solutions}\label{Sec.a-prioribound}
In this section, we obtain a-priori bounds for solutions to problem
\begin{equation}\label{eq.bounds}
\begin{cases}
(-\Delta)_{p(x)}^su=f(x,u) \quad \text{in} \quad \Omega, \\
u=0 \quad \text{in} \quad \Bbb R^N\setminus \Omega,
\end{cases}
\end{equation}
where $\Omega$ is a bounded Lipschitz domain in $\Bbb R^N$; $s\in (0,1)$; $p\in C\left(\Bbb R^N\times \Bbb R^N\right)$  is symmetric such
that $1<\underset{(x,y)\in \Bbb R^N\times \Bbb R^N}{\inf}\ p(x,y)\le
\underset{(x,y)\in \Bbb R^N\times \Bbb R^N}{\sup}\ p(x,y)<\frac{N}{s};$  and the nonlinear term $f$ satisfies that
\begin{itemize}
	\item[\AssF1] $f:\Omega\times \Bbb R\to \Bbb R$ is a
	Carath\'eodory function such that
	$$
	|f(x,t)|\le C\round{1+|t|^{q(x)-1}}, \ \text{for a.e.}\ x\in \Omega\ \text{and for all}\ t\in\mathbb{R},
	$$
	for some positive constant $C$, where $q\in C_+(\ol{\Omega})$ such that $q(x)<p_s^*(x)$ for all $x\in\ol{\Omega}.$ 
	\end{itemize}
We look for solutions of problem \eqref{eq.bounds} in the space
$$W_0^{s,p(\cdot,\cdot)}(\Omega):=\left\{u\in W^{s,p(\cdot,\cdot)}(\mathbb{R}^N):\ u=0\quad \text{in}\quad \mathbb{R}^N\setminus\Omega\right\}.$$
Clearly, $W_0^{s,p(\cdot,\cdot)}(\Omega)$ is a closed subspace of $W^{s,p(\cdot,\cdot)}(\mathbb{R}^N)$ and hence, $W_0^{s,p(\cdot,\cdot)}(\Omega)$ a reflexive separable Banach space with the norm $\|\cdot\|_{s,p,\mathbb{R}^N}.$ Furthermore, $W_0^{s,p(\cdot,\cdot)}(\Omega)\hookrightarrow W^{s,p(\cdot,\cdot)}(\Omega)$ and hence, as a consequence of Corollary~\ref{special.imb} we have
\begin{equation}\label{compact.imb}
W_0^{s,p(\cdot,\cdot)}(\Omega)\hookrightarrow\hookrightarrow L^{q(\cdot)}(\Omega).
\end{equation}
This makes the following definition well-defined.
\begin{definition}\label{def.weak.sol}
We say that $u\in W_0^{s,p(\cdot,\cdot)}(\Omega)$ is a (weak) solution of problem \eqref{eq.bounds} if
\begin{equation}\label{weak.eq}
\int_{\mathbb{R}^N}\int_{\mathbb{R}^N}
\frac{|u(x)-u(y)|^{p(x,y)-2}(u(x)-u(y))(v(x)-v(y))}{|x-y|^{N+sp(x,y)}}
\,\diff x\diff y =\int_{\Omega}f(x,u)
v\,\diff x
\end{equation}
for all $v \in W_0^{s,p(\cdot,\cdot)}(\Omega)$.
\end{definition}
Our main result in this section is the following.

\begin{theorem}\label{a-priori.bound}
Let $p,s$ be as above such that
\begin{equation}\label{LH}
\inf_{\epsilon>0}\ \underset{\underset{0<|x-y|<1/2}{(x,y)\in\mathbb{R}^N\times\mathbb{R}^N}}{\sup} \left|p(x,y)-p_{B_{\epsilon}(x,y)}^-\right|\log \frac{1}{|x-y|} <\infty,
\end{equation}
where $p_{B_{\epsilon}(x,y)}^-:=\underset{(x',y')\in B_{\epsilon}(x,y)}{\inf}p(x',y').$ Then, under the assumption $\textup{(F1)}$ a weak solution $u$ to problem \eqref{eq.bounds} belongs to $L^{\infty}(\Omega)$ and there exist $C,\tau_1, \tau_2$
independent of $u$ such that
\begin{equation}\label{L^infity}
\|u\|_{L^{\infty}(\Omega)}\le
C\max\left\{\|u\|^{\tau_1}_{L^{\widetilde{q}(\cdot)}(\Omega)},\|u\|^{\tau_2}_{L^{\widetilde{q}(\cdot)}(\Omega)}\right\}
\end{equation}
for $\widetilde{q}(x):=\max\{p(x),q(x)\}.$
\end{theorem}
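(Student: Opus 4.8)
The plan is to run a De Giorgi iteration on the super-level sets of $u$, combined with the localization idea already used in the proof of Theorem~\ref{general.imb}: since the fractional exponent $p(x,y)$ does not vary enough locally to spoil Sobolev embeddings, on a sufficiently small ball we may freeze it to a constant $p_i$ and use the classical fractional Sobolev inequality (Proposition~\ref{P-S.inequality}). The condition \eqref{LH} — a log-Hölder type condition on the variable exponent $p(\cdot,\cdot)$ near the diagonal — will be what makes the freezing procedure quantitatively controllable, i.e. it will ensure that quantities like $|x-y|^{p(x,y)-p_i}$ stay bounded (up to constants depending on the size of the cube) on the relevant scale, so that the ``frozen'' Gagliardo seminorm dominates the genuine one uniformly. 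First I would fix the weak solution $u\in W_0^{s,p(\cdot,\cdot)}(\Omega)$ and, for $k>0$, test the weak formulation \eqref{weak.eq} with $v=u_k:=(u-k)^+$ (and symmetrically $-(u+k)^-$ to handle $-u$, so it suffices to bound $u^+$). Using the elementary monotonicity inequality for $t\mapsto |t|^{p-2}t$, the left-hand side is bounded below by a constant times the Gagliardo $p(\cdot,\cdot)$-energy of $u_k$ on $\mathbb{R}^N$ restricted to the set $A_k:=\{x\in\Omega: u(x)>k\}$; the right-hand side is estimated by $\textup{(F1)}$ and Hölder's inequality in variable exponent spaces (Proposition~\ref{lem1}) against $\|u_k\|$ in a suitable $L^{r(\cdot)}$ with $r<p_s^*$, times powers of $|A_k|$.

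The core of the argument is the localized nonlinear Caccioppoli/Sobolev estimate. Cover $\overline\Omega$ by finitely many small balls $B_i$ with $\Omega_i=B_i\cap\Omega$ Lipschitz, take the associated partition of unity $\{\xi_i\}$, and on each piece freeze $p$ to the constant $p_i=\inf_{\Omega_i\times\Omega_i}p$. As in \eqref{Wt,pi.5}--\eqref{Wt,pi.6}, Proposition~\ref{est.Deining} together with \eqref{LH} converts the constant-exponent Gagliardo $(t,p_i)$-seminorm of $\xi_i u_k$ into something controlled by the variable-exponent energy of $u_k$ plus lower-order $L^{q(\cdot)}$ terms, for a suitable $t\in(0,s)$ chosen so that $p_{t,i}^*=\frac{Np_i}{N-tp_i}$ still exceeds the relevant exponents (exactly the choice made around \eqref{subscitical.loc}). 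Then Proposition~\ref{P-S.inequality} on $W^{t,p_i}$ (after extension, as in the proof of Theorem~\ref{Theo.imb.RN}(i)) upgrades the energy into an $L^{p_{t,i}^*}$ bound on $\xi_i u_k$, hence on $u_k$ after summation. The upshot is an inequality of the schematic form
\begin{equation*}
\|u_k\|_{L^{\kappa}(\Omega)}^{\gamma}\le C\,\big(1+\|u\|_{L^{\widetilde q(\cdot)}(\Omega)}\big)^{\theta}\,|A_k|^{\delta},\qquad k\ge k_0,
\end{equation*}
with $\kappa>\widetilde q^+$ and $\delta>1$ (after interpolating the lower-order terms and absorbing). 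Equivalently, setting $a(k):=\int_{A_k}\diff x$ or better $a(k):=\int_{A_k}(1+|u|^{\widetilde q(x)})\diff x$, for $h>k\ge k_0$ one gets $(h-k)^{\kappa}a(h)\le \|u_k\|_{L^{\kappa}}^{\kappa}\le C a(k)^{1+\varepsilon}$, i.e. the standard De Giorgi recursion
\begin{equation*}
a(h)\le \frac{C}{(h-k)^{\kappa}}\,a(k)^{1+\varepsilon}.
\end{equation*}

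A classical iteration lemma (Stampacchia/De Giorgi: if $a$ is nonincreasing and $a(h)\le C(h-k)^{-\kappa}a(k)^{1+\varepsilon}$ then $a(k_0+d)=0$ with $d^{\kappa}=C\,a(k_0)^{\varepsilon}2^{\kappa(1+\varepsilon)/\varepsilon}$) then forces $a$ to vanish beyond a finite level, which gives $\|u^+\|_{L^\infty(\Omega)}\le k_0+d$; running the same on $-u$ gives the $L^\infty$ bound, and tracking the dependence of $k_0$ and $d$ on $\|u\|_{L^{\widetilde q(\cdot)}(\Omega)}$ (choosing $k_0$ comparable to $\|u\|_{L^{\widetilde q(\cdot)}}$ and reading off $d$ from $a(k_0)$) yields the two-sided power bound \eqref{L^infity} with explicit $\tau_1,\tau_2$; the two cases ``norm $\ge 1$'' and ``norm $<1$'' account for the $\max$. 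The main obstacle, and the genuinely new point compared with \cite{CKK,VZ,HS2014}, is precisely the nonlocal freezing step: one must verify that \eqref{LH} is exactly the right hypothesis to bound $|x-y|^{p(x,y)-p_i}$ (and the analogous ratio appearing when passing from the $s$-seminorm to the $t$-seminorm) uniformly on the small cubes, and that the cross terms in the Gagliardo energy of $\xi_i u_k$ coming from $\Omega_i\times(\mathbb{R}^N\setminus\Omega_i)$ — which have no analogue in the local case — are absorbable by the lower-order $L^{\widetilde q(\cdot)}$ terms; this is where the argument requires care beyond the routine bookkeeping of the De Giorgi scheme.
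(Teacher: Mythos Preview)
Your overall scheme --- test with $(u-k)^+$, localize by a partition of unity, apply the constant-exponent fractional Sobolev inequality on each piece, and iterate --- is exactly the paper's. The muddle is in the freezing step. You invoke the $t<s$ device of \eqref{Wt,pi.5}--\eqref{Wt,pi.6} ``together with \eqref{LH}'', but those estimates are \emph{norm}-level and do not use \eqref{LH}; they hold for any continuous $p$. The recursion, however, needs a \emph{modular} comparison, because the energy bound obtained from testing is a modular inequality and so is the output of Proposition~\ref{P-S.inequality}. If you route through the Luxemburg seminorm and convert back via Proposition~\ref{norm-modular2}, a sublinear exponent $p_i^-/p_i^+$ contaminates the right-hand side; to keep the final exponent on $Z_n$ strictly above $1$ you would then need $\widetilde q_i>p_i^+$, hence $(p_i^-)_t^*>p_i^+$, an extra smallness constraint on the cover that your sketch neither states nor verifies --- and which, in any case, never calls on \eqref{LH}.

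The paper proceeds differently: it stays at scale $s$ (no auxiliary $t$) and uses \eqref{LH} for exactly one thing, the pointwise lower bound $|x-y|^{s(p(x,y)-p_i^-)}\ge c>0$ on $B_i\times B_i$. Writing the variable-exponent density as $\bigl|\tfrac{u_n(x)-u_n(y)}{|x-y|^{2s}}\bigr|^{p(x,y)}|x-y|^{-(N-sp_i^-)}|x-y|^{s(p(x,y)-p_i^-)}$ and then using $a^{p(x,y)}\ge a^{p_i^-}-1$ gives the \emph{linear} modular inequality
\[
\int_{B_i}\int_{B_i}\frac{|u_n(x)-u_n(y)|^{p_i^-}}{|x-y|^{N+sp_i^-}}\,\diff x\,\diff y\ \le\ C\Bigl(\int_{B_i}\int_{B_i}\frac{|u_n(x)-u_n(y)|^{p(x,y)}}{|x-y|^{N+sp(x,y)}}\,\diff x\,\diff y + |A_{k_{n+1}}|\Bigr),
\]
the additive $|A_{k_{n+1}}|$ arising from the locally integrable remainder $|x-y|^{-N+sp_i^-}$. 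This feeds directly into Proposition~\ref{P-S.inequality} at scale $s$ and then into Lemma~\ref{leRecur} with no norm--modular detour. Your closing remark that \eqref{LH} is ``the right hypothesis to bound $|x-y|^{p(x,y)-p_i}$'' is exactly correct --- but that bound should \emph{replace} the $t<s$ mechanism in your argument, not sit alongside it. (Your preference for the Stampacchia form of the iteration over Lemma~\ref{leRecur} is immaterial.)
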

In the following example, we provide a non-constant exponent $p$ that fulfills the conditions in Theorem~\ref{a-priori.bound}.
\begin{example}\rm
Let $R>0$ be such that $\ol{\Omega}\times \ol{\Omega}\subset
B_R(0,0)$ and let $\xi_R\in C_c^{\infty}(\Bbb R^N\times \Bbb R^N)$ be
such that $0\leq \xi_R\leq 1,$ $\xi_R=1$ on $\ol{\Omega}\times \ol{\Omega}$ and
$\operatorname{supp}(\xi_R)\subset B_R(0,0)$. Let $p(x,y)=p_0+|x-y|\xi_R(x,y)$ on $\mathbb{R}^N\times\mathbb{R}^N$ 
for some constant $p_0>1$. Thus,
$$1<p_0\leq p(x,y)\leq p_0+\underset{(x,y)\in B_R(0,0)}{\sup}|x-y|\xi_R(x,y)<\infty$$
and for any $\epsilon>0,$
$$0\leq p(x,y)-p_{B_{\epsilon}(x,y)}^-\le p(x,y)-p_0\le |x-y|, \ \forall (x,y)\in\mathbb{R}^N\times\mathbb{R}^N.
$$
This implies that for all $\epsilon>0$ and for all $(x,y)\in\mathbb{R}^N\times\mathbb{R}^N$ with $|x-y|\le \frac{1}{2}$ we have
$$\left|p(x,y)-p_{B_{\epsilon}(x,y)}^-\right|\log\frac{1}{|x-y|}\le \underset{\underset{0<|x-y|<1/2}{(x,y)\in\mathbb{R}^N\times\mathbb{R}^N}}{\sup}\left|x-y\right|\log\frac{1}{|x-y|}<\infty$$
due to the fact that $\lim_{t\to 0^+}t\log t=0,$ and hence, \eqref{LH} holds.
\end{example}
\bigskip
To prove Theorem~\ref{a-priori.bound} we employ the De Giorgi iteration argument used in \cite{HS2014} for which the following result is essential.
\begin{lemma}{\rm (\cite[Lemma 4.3]{HS2014})}\label{leRecur}
	Let $\{Z_n\}_{n=0}^{\infty}$  be a sequence  of positive numbers satisfying the recursion inequality
	\begin{equation}\label{Recur.Int}
	Z_{n+1}\leq K b^n \left( Z_{n}^{1+\delta_1}+Z_{n}^{1+\delta_2}\right),\quad n=0,1,2,\cdots,
	\end{equation}
	for some $b>1, \ K>0 \ and\ \delta_2 \geq \delta_1 >0$. If $Z_{0}\leq \min\left(1,(2K)^{\frac{-1}{\delta_1}}\ b^{\frac{-1}{\delta_1^{2}}}\right) $ or
	\begin{equation*}\label{Y0.Le}
	Z_{0}\leq \min\left((2K)^{\frac{-1}{\delta_1}}\ b^{\frac{-1}{\delta_1^{2}}},(2K)^{\frac{-1}{\delta_2}}\ b^{-\frac{1}{\delta_1\delta_2}-\frac{\delta_2-\delta_1}{\delta_2^{2}}}\right),
	\end{equation*}
	then there exists $n \in \mathbb{N}\cup \{0\}=:\mathbb{N}_0$ such that $Z_n\leq 1$. Moreover,
	$$
	Z_{n}\leq \min\left(1,(2K)^{\frac{-1}{\delta_1}}\ b^{\frac{-1}{\delta_1^{2}}}\ b^{\frac{-n}{\delta_1}}\right),\ \forall n\geq n_0,
	$$
	where $n_0$ is the smallest $n\in \mathbb{N}_0$ for which $Z_n\leq 1$. In particular, $Z_n \to 0$ as $n \to \infty$.
\end{lemma}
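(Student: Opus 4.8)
The plan is to turn \eqref{Recur.Int} into two monotone single‑exponent recursions and then run a De Giorgi–type fast‑decay induction, watching the first index at which the sequence reaches the level $1$. Throughout, write $A:=(2K)^{-1/\delta_1}b^{-1/\delta_1^2}$ and $\theta:=\frac{1}{\delta_1\delta_2}+\frac{\delta_2-\delta_1}{\delta_2^2}$; note that each of the two smallness conditions on $Z_0$ forces $Z_0\le A$, while the first one additionally gives $Z_0\le 1$. Since $\delta_2\ge\delta_1>0$, for every $t>0$ one has $t^{1+\delta_1}+t^{1+\delta_2}\le 2t^{1+\delta_1}$ if $0<t\le 1$ and $t^{1+\delta_1}+t^{1+\delta_2}\le 2t^{1+\delta_2}$ if $t\ge 1$, so \eqref{Recur.Int} yields
\[
Z_{n+1}\le 2Kb^nZ_n^{1+\delta_1}\ \text{ whenever }\ Z_n\le 1,\qquad Z_{n+1}\le 2Kb^nZ_n^{1+\delta_2}\ \text{ whenever }\ Z_n\ge 1 .
\]

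First I would analyse the iterates before the sequence drops to the level $1$. Let $n_0$ be the smallest index with $Z_{n_0}\le 1$ (existence is checked below; $n_0=0$ whenever $Z_0\le 1$, in particular under the first smallness condition). For $0\le n\le n_0-1$ we have $Z_n>1$, so the second recursion holds on that range, and iterating it an easy induction on $j$ gives, as long as $Z_n\ge 1$ for $0\le n\le j-1$,
\[
Z_j\le (2K)^{\sigma_j}\,b^{\beta_j}\,Z_0^{(1+\delta_2)^j},\qquad \sigma_j:=\tfrac{(1+\delta_2)^j-1}{\delta_2},\quad \beta_j:=\tfrac{(1+\delta_2)^j-1}{\delta_2^2}-\tfrac{j}{\delta_2}.
\]
Now split $Z_0^{(1+\delta_2)^j}=Z_0\cdot Z_0^{(1+\delta_2)^j-1}$, bound the single factor $Z_0$ by $A$ and the power $Z_0^{(1+\delta_2)^j-1}$ by $\big((2K)^{-1/\delta_2}b^{-\theta}\big)^{(1+\delta_2)^j-1}$ — this is exactly where both parts of the second smallness condition are used. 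Since $\sigma_j=\frac{(1+\delta_2)^j-1}{\delta_2}$, the powers of $2K$ cancel down to $(2K)^{-1/\delta_1}$ and one is left with
\[
Z_j\le A\,b^{-j/\delta_2}\,b^{-\kappa\left((1+\delta_2)^j-1\right)},\qquad \kappa:=\theta-\tfrac{1}{\delta_2^2}=\tfrac{(\delta_2-\delta_1)(1+\delta_1)}{\delta_1\delta_2^2}\ge 0 .
\]
Using $(1+\delta_2)^j-1\ge j\delta_2$ (Bernoulli) and $1+\delta_1\ge 1$, one verifies the elementary inequality $\kappa\big((1+\delta_2)^j-1\big)+\frac{j}{\delta_2}\ge\frac{j}{\delta_1}$, whence $Z_j\le A\,b^{-j/\delta_1}$. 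In particular, were $Z_n>1$ for all $n$, this bound would force $Z_n\le A\,b^{-n/\delta_1}\to 0$, a contradiction; so $n_0$ exists, and applying the bound with $j=n_0$ and combining with $Z_{n_0}\le 1$ gives, in all cases, $Z_{n_0}\le\min\{1,\,A\,b^{-n_0/\delta_1}\}$.

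It remains to propagate this for $n\ge n_0$. I would show by induction that $Z_n\le\min\{1,\,A\,b^{-n/\delta_1}\}$ for all $n\ge n_0$, the base case being the previous step. In the inductive step $Z_n\le 1$ gives $Z_{n+1}\le 2Kb^nZ_n^{1+\delta_1}$; writing $Z_n^{1+\delta_1}=Z_n\cdot Z_n^{\delta_1}\le 1\cdot\big(A\,b^{-n/\delta_1}\big)^{\delta_1}=(2K)^{-1}b^{-1/\delta_1}b^{-n}$ yields $Z_{n+1}\le b^{-1/\delta_1}\le 1$, while writing $Z_n^{1+\delta_1}\le\big(A\,b^{-n/\delta_1}\big)^{1+\delta_1}$ together with the identity $2K\,A^{1+\delta_1}=A\,b^{-1/\delta_1}$ yields $Z_{n+1}\le A\,b^{-(n+1)/\delta_1}$; the two bounds close the induction. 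Hence $Z_n\le A\,b^{-n/\delta_1}\to 0$ because $b>1$, which is the asserted decay together with the stated quantitative bound $Z_n\le\min\{1,(2K)^{-1/\delta_1}b^{-1/\delta_1^2}b^{-n/\delta_1}\}$ for $n\ge n_0$.

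The step I expect to be the main obstacle is the second paragraph: one must iterate the $\delta_2$‑recursion over the stretch $0,\dots,n_0-1$, whose length is not a priori bounded, and then match the resulting double‑exponential‑in‑$j$ expression against the target $A\,b^{-j/\delta_1}$. This is precisely where the somewhat opaque exponent $\frac{1}{\delta_1\delta_2}+\frac{\delta_2-\delta_1}{\delta_2^2}$ in the second smallness condition is needed, and where $\delta_1\le\delta_2$ must be used carefully — it enters both through the regime splitting and through the elementary inequality $\kappa\big((1+\delta_2)^j-1\big)+\frac{j}{\delta_2}\ge\frac{j}{\delta_1}$. Once the crossover estimate $Z_{n_0}\le\min\{1,A\,b^{-n_0/\delta_1}\}$ is secured, the remaining induction is entirely routine.
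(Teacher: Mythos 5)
Your proof is correct: the regime splitting of the two-exponent recursion, the closed-form iteration of the $\delta_2$-branch with exponents $\sigma_j,\beta_j$ (both verified by the recurrences $\sigma_{j+1}=1+(1+\delta_2)\sigma_j$, $\beta_{j+1}=j+(1+\delta_2)\beta_j$), the cancellation leaving $Z_j\le A\,b^{-j/\delta_2-\kappa((1+\delta_2)^j-1)}$ with $\kappa=\frac{(\delta_2-\delta_1)(1+\delta_1)}{\delta_1\delta_2^2}\ge 0$, the Bernoulli step giving $Z_j\le A\,b^{-j/\delta_1}$ up to the crossover index $n_0$, and the final induction using $A^{\delta_1}=(2K)^{-1}b^{-1/\delta_1}$ all check out, and together they yield exactly the stated bound $Z_n\le\min\{1,\,(2K)^{-1/\delta_1}b^{-1/\delta_1^2}b^{-n/\delta_1}\}$ for $n\ge n_0$ and hence $Z_n\to 0$. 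The paper itself does not prove this lemma but quotes it from \cite[Lemma 4.3]{HS2014}; your argument is the standard fast-geometric-decay induction in the spirit of that reference, so there is nothing in the present paper to compare it against beyond confirming that it establishes the statement in full (both smallness conditions, existence of $n_0$, and the quantitative decay).
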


\begin{proof}[Proof of Theorem~\ref{a-priori.bound}] Let $u$ be a weak solution to problem~\eqref{eq.bounds}. Note that by replacing $q$ with $\widetilde{q}$ if necessary, we may assume $p(x)\leq q(x)$ for all $x\in\overline{\Omega}.$ The proof includes several steps.   
	
	\textbf{Step 1. Constructing the recursion sequence $\{Z_n\}$ and basic estimates.}
	
	\noindent Define
$$Z_n:=\int_{A_{k_n}}(u-k_n)^{q(x)}\,\diff x,$$
where $k_n:=k_*(2-\frac{1}{2^n}),
 \ n\in \mathbb{N}_0$ for $k_*>0$ to be specified later and $A_k:=\{x\in \Omega: u(x)>k\}$ for $k>0.$  Clearly, $k_n \uparrow 2k_{\ast},$ $k_{\ast} \leq k_n < 2k_{\ast},$ and $k_{n+1}-k_n = \frac{k_{\ast}}{2^{n+1}}$ for all $n \in \mathbb{N}_0.$
 We have the following estimates:
 \begin{equation}\label{N001}
 \int_{A_{k_{n+1}}}u^{q(x)}\,\diff x\le 2^{(n+2)q^+}Z_n,\quad \forall n\in\mathbb{N}_0;
 \end{equation}
 \begin{equation}\label{N002}
 |A_{k_{n+1}}|\le \round{k_*^{-q^-}+k_*^{-q^+}}2^{(n+1)q^+}Z_n\le 2\round{1+k_*^{-q^+}}2^{(n+1)q^+}Z_n,\quad \forall n\in\mathbb{N}_0;
 \end{equation}
 and
 \begin{equation}\label{grad}
 \int_{\Bbb R^N}\int_{\Bbb R^N}
 \frac{|u_n(x)-u_n(y)|^{p(x,y)}}{|x-y|^{N+sp(x,y)}} \,\diff x\diff y \le
 C_1\left(1+k_*^{-q^+}\right)2^{nq^+}Z_n,\quad \forall n\in\mathbb{N}_0,
 \end{equation}
 where $u_n:=(u-k_{n+1})_+.$ Here and in the rest of the proof, $C_i$ ($i\in\mathbb{N}$) is a positive  constant independent of $u$ and $n$ and $v_+:=\max\{v,0\}.$

 Indeed, we have
\begin{align*}
Z_n&=\int_{A_{k_n}}(u-k_n)^{q(x)}\,\diff x\\
&\ge\int_{A_{k_{n+1}}}\round{u-\frac{u}{k_{n+1}}k_n}^{q(x)}\,\diff x =\int_{A_{k_{n+1}}}\round{1-\frac{k_n}{k_{n+1}}}^{q(x)}u^{q(x)}\,\diff x\\
&\ge\int_{A_{k_{n+1}}}\frac{u^{q(x)}}{(2^{n+2}-1)^{q(x)}}\,\diff x\ge
\int_{A_{k_{n+1}}}\frac{u^{q(x)}}{2^{(n+2)q^+}}\,\diff x,
\end{align*}
and hence, \eqref{N001} follows. The estimate \eqref{N002} for the measures of the level sets $A_{k_{n}}$ follows from
$$|A_{k_{n+1}}|\le
\int_{A_{k_{n+1}}}\round{\frac{u-k_n}{k_{n+1}-k_n}}^{q(x)}\,\diff x\le
\int_{A_{k_{n}}}\round{\frac{2^{n+1}}{k_*}}^{q(x)}(u-k_n)^{q(x)}\,\diff x.
$$
Finally, to obtain \eqref{grad} we first note that $(u-k)_+\in W_0^{s,p(\cdot,\cdot)}(\Omega)$ for all $u\in W_0^{s,p(\cdot,\cdot)}(\Omega)$ and all $k\geq 0.$ Using $u_n$ as a test function in \eqref{weak.eq}, we obtain
\begin{align*}
\int_{\Bbb R^N}\int_{\Bbb R^N}&
\frac{|u(x)-u(y)|^{p(x,y)-2}(u(x)-u(y))(u_n(x)-u_n(y))}{|x-y|^{N+sp(x,y)}}
\,\diff x \diff y=\int_{\Omega}f(x,u)u_n(x)\,\diff x.\\
\end{align*}
 Using the inequality $|\alpha-\beta|^{\gamma-2}(\alpha-\beta)(\alpha_+-\beta_+)\geq |\alpha_+-\beta_+|^\gamma$ for all $\alpha,\beta,\gamma\in\mathbb{R}$ with $\gamma>1$ and $\textup{(F1})$, we deduce from the last equality that
\begin{align*}
 \int_{\Bbb R^N}\int_{\Bbb R^N}
\frac{|u_n(x)-u_n(y)|^{p(x,y)}}{|x-y|^{N+sp(x,y)}} \,\diff x \diff y \le C\int_{\Omega}\left[1+|u|^{q(x)-1}\right]u_n(x)\,\diff x.
\end{align*}
Thus
\begin{align*}
 \int_{\Bbb R^N}\int_{\Bbb R^N}
\frac{|u_n(x)-u_n(y)|^{p(x,y)}}{|x-y|^{N+sp(x,y)}} \,\diff x \diff y &\le
C\int_{A_{k_{n+1}}}u^{q(x)}\,\diff x+C\int_{A_{k_{n+1}}}u\,\diff x
\\
&\le C\int_{A_{k_{n+1}}}u^{q(x)}\,\diff x+C\int_{A_{k_{n+1}}}u^{1-q(x)}u^{q(x)}\,\diff x\\
&\le C\int_{A_{k_{n+1}}}u^{q(x)}\,\diff x+C\left(1+k_*^{1-q^+}\right)\int_{A_{k_{n+1}}}u^{q(x)}\,\diff x,
\end{align*}
and hence, \eqref{grad} follows.

\bigskip

\textbf{Step 2. Localization.}

\noindent Let $\{B_{i}\}_{i=1}^{m}$ be a covering of $\ol{\Omega},$ where $B_{i}:=B_\epsilon(z_i)$ with $\epsilon>0$ sufficiently small such that $B_i\cap\Omega$ ($i\in\{1,\cdots,m\}$) are Lipschitz domains as well. By \eqref{LH} and the continuity of $p,q$, we can choose $\e\in (0,1/4)$ sufficiently small such that
\begin{equation}\label{LH.loc}
-\left|p(x,y)-p^-_{B_{4\epsilon}(x,y)}\right|\log |x-y|\leq C_2,\quad \forall x,y\in\mathbb{R}^N,\ |x-y|<\frac{1}{2},
\end{equation}
and
\begin{equation}\label{localized. exponents}
p_i^-:=\inf_{(x,y)\in B_i\times B_i}p(x,y)\le q_{i}^-:=\inf_{x\in B_i\cap \Omega} q(x)\le q_{i}^+:=\sup_{x\in B_i\cap \Omega} q(x)<(p_i^-)_s^*.
\end{equation}
Let $\{\xi_{i}\}_{i=1}^{m}$ be a partition of unity of $\ol{\Omega}$ associated with
$\{B_i\}_{i=1}^{m}$, that is, for each $i\in\{1,\cdots,m\},$ $\xi_{i}\in
C_c^{\infty}(\Bbb R^N),\ \operatorname{supp}(\xi_{i})\subset
B_i,$ $0\le \xi_{i}\le 1,$ and
\begin{equation}\label{xi.i}
\sum_{i=1}^{m}\xi_{i}=1 \ \text{on} \ \ol{\Omega}.
\end{equation}
The following assertion holds:
\begin{equation}\label{localization}
\int_{B_i}\int_{B_i}\frac{|u_n(x)-u_n(y)|^{p_i^-}}{|x-y|^{N+sp_i^-}} \,\diff x \diff y\le C_3\round{
	\int_{B_i}\int_{B_i}
	\frac{|u_n(x)-u_n(y)|^{p(x,y)}}{|x-y|^{N+sp(x,y)}}
	\,\diff x \diff y+|A_{k_{n+1}}|},
\end{equation}
and hence,
\begin{equation}\label{localization1}
\int_{B_i}\int_{B_i}
\frac{|u_n(x)-u_n(y)|^{p_i^-}}{|x-y|^{N+sp_i^-}} \,\diff x \diff y\le C_4\round{1+k_*^{-q^+}}2^{nq^+}Z_n,
\end{equation}
for all $i\in\{1,2,\cdots , m\}$ and  all $n\in \Bbb N.$

\noindent To prove \eqref{localization}, let $R>1$ be such that $\Omega-\Omega\subset B_{R-1}(0).$ 
For each $i\in\{1,\cdots,m\}$, we have
\begin{equation}\label{I}
\int_{B_i} \frac{\diff x}{|x-y|^{N-sp_i^-}}\le\int_{B_R(0)}
\frac{\diff z}{|z|^{N-sp_i^-}}= \frac{\omega_NR^{sp_i^-}}{sp_i^-}\leq \frac{\omega_NR^{sp^+}}{sp^-},\ \forall y\in\Omega.
\end{equation}
Fix $i\in\{1,\cdots,m\}$ and $n\in\mathbb{N}.$ We have
\begin{align*}
\int_{B_i}\int_{B_i} \frac{|u_n(x)-u_n(y)|^{p(x,y)}}{|x-y|^{N+sp(x,y)}}
\,\diff x \diff y&=\int_{B_i\cap A_{k_{n+1}}}\int_{B_i\cap
A_{k_{n+1}}} \frac{|u_n(x)-u_n(y)|^{p(x,y)}}{|x-y|^{N+sp(x,y)}}
\,\diff x \diff y\\
&\quad +2\int_{B_i\cap
A_{k_{n+1}}}\int_{B_i\setminus A_{k_{n+1}}}
\frac{|u_n(x)-u_n(y)|^{p(x,y)}}{|x-y|^{N+sp(x,y)}} \,\diff x \diff y \\
&=: I_1+2I_2.
\end{align*}
To estimate $I_1$ and $I_2$, we note that \eqref{LH.loc} implies that
\begin{equation}\label{LH'}
|x-y|^{s(p(x,y)-p_i^-)}=e^{s(p(x,y)-p_i^-)\log{|x-y|}}\ge C_5,\quad \forall (x,y)\in B_i\times B_i.
\end{equation}
Ineed, let $p_i^-=p(x_i,y_i)$ for some $(x_i,y_i)\in \bar{B_i}\times \bar{B_i}.$ Clearly, $|(x_i,y_i)-(x,y)|=|x-x_i|+|y-y_i|<4\epsilon$, and hence, $(x_i,y_i)\in B_{4\epsilon}(x,y)$ for all $(x,y)\in B_i\times B_i.$ Moreover, $|x-y|<2\epsilon<\frac{1}{2}$  for all $(x,y)\in B_i\times B_i.$ Thus,
$$-(p(x,y)-p_i^-)\log{|x-y|}\leq -\left(p(x,y)-p^-_{B_{4\epsilon}(x,y)}\right)\log |x-y|\leq C_2,\quad \forall (x,y)\in B_i\times B_i,$$
and hence, we obtain \eqref{LH'}.

\noindent Denote $\Sigma_{i,n}=B_i\cap A_{k_{n+1}}$ and $\mathcal{C}_{i,n}=B_i\setminus A_{k_{n+1}}$ for brevity. Using \eqref{LH'}, we estimate $I_1$ as follows:
\begin{align}\label{I1.1}
I_1&=\int_{\Sigma_{i,n}}\int_{\Sigma_{i,n}}
\abs{\frac{|u_n(x)-u_n(y)|}{|x-y|^{2s}}}^{p(x,y)}\cdot
\frac{1}{|x-y|^{N-sp_i^-}} \cdot \frac{1}{|x-y|^{-s(p(x,y)-p_i^-)}}
\,\diff x \diff y\notag\\
&\ge C_5\int_{\Sigma_{i,n}}\int_{\Sigma_{i,n}}
\round{\abs{\frac{|u_n(x)-u_n(y)|}{|x-y|^{2s}}}^{p_i^-}-1}\cdot
\frac{\diff x \diff y}{|x-y|^{N-sp_i^-}}\notag\\
&\ge C_5\int_{\Sigma_{i,n}}\int_{\Sigma_{i,n}}
\frac{|u_n(x)-u_n(y)|^{p_i^-}}{|x-y|^{N+sp_i^-}}\,\diff x \diff y
-C_5\int_{A_{k_{n+1}}}\round{\int_{B_i}
\frac{\diff x}{|x-y|^{N-sp_i^-}}}\diff y.
\end{align}
From this and \eqref{I}, we obtain
\begin{equation}\label{I1}
I_1\ge C_5\int_{B_i\cap A_{k_{n+1}}}\int_{B_i\cap A_{k_{n+1}}}
\frac{|u_n(x)-u_n(y)|^{p_i^-}}{|x-y|^{N+sp_i^-}}\,\diff x \diff y-C_6|A_{k_{n+1}}|.
\end{equation}
On the other hand, using \eqref{LH'} again we have
\begin{align*}
I_2&=\int_{\Sigma_{i,n}}\int_{\mathcal{C}_{i,n}}
\abs{\frac{|u_n(x)-u_n(y)|}{|x-y|^{2s}}}^{p(x,y)}\frac{1}{|x-y|^{-s(p(x,y)-p_i^-)}}
\frac{\diff x \diff y}{|x-y|^{N-sp_i^-}}\\
&\ge C_5\int_{\Sigma_{i,n}}\int_{\mathcal{C}_{i,n}}
\round{\abs{\frac{|u_n(x)-u_n(y)|}{|x-y|^{2s}}}^{p_i^-}-1}
\frac{\diff x \diff y}{|x-y|^{N-sp_i^-}}\\
&\ge C_5\int_{\Sigma_{i,n}}\int_{\mathcal{C}_{i,n}}
\frac{|u_n(x)-u_n(y)|^{p_i^-}}{|x-y|^{N+sp_i^-}}\,\diff x \diff y-C_5
\int_{A_{k_{n+1}}}\round{\int_{B_i}\frac{\diff x}{|x-y|^{N-sp_i^-}}}\diff y.
\end{align*}
This and \eqref{I} yield
$$I_2\ge C_5\int_{B_i\cap A_{k_{n+1}}}\int_{B_i\setminus A_{k_{n+1}}}
\frac{|u_n(x)-u_n(y)|^{p_i^-}}{|x-y|^{N+sp_i^-}}\,\diff x \diff y-C_6
|A_{k_{n+1}}|.$$
Combining this with \eqref{I1} we obtain
$$
\int_{B_i}\int_{B_i} \frac{|u_n(x)-u_n(y)|^{p(x,y)}}{|x-y|^{N+sp(x,y)}}
\,\diff x \diff y\ge C_5\int_{B_i}\int_{B_i}
\frac{|u_n(x)-u_n(y)|^{p_i^-}}{|x-y|^{N+sp_i^-}}
\,\diff x \diff y-3C_6|A_{k_{n+1}}|.
$$
Thus, \eqref{localization} has been proved. The inequality \eqref{localization1} then follows from \eqref{N002}, \eqref{grad} and \eqref{localization}.

\medskip
\textbf{Step 3. Estimate $Z_{n+1}$ by $Z_n$.}

\noindent From \eqref{xi.i} we have $u_n=\sum_{i=1}^{m}\xi_{i}u_n$ on $\Omega$ and hence, using Jensen's inequality we have
\begin{equation}\label{decomposition.Zn+1}
Z_{n+1}=\int_{A_{k_{n+1}}}u_n^{q(x)}\,\diff x\le
m^{q^+-1}\sum_{i=1}^m\round{\int_{A_{k_{n+1}}}|u_n\xi_i|^{q_i^+}\,\diff x+\int_{A_{k_{n+1}}}|u_n\xi_i|^{q_i^-}\,\diff x}.
\end{equation}
For each $i\in \{1,\cdots,m\}$, fix $\widetilde{q}_i\in \left(q_i^+, (p_i^-)_s^*\right)$ and let $\ol{q}_i\in
\{q_i^+,q_i^-\}$. Using H\"older's inequality we have
\begin{equation}\label{*1}
\int_{A_{k_{n+1}}}|u_n\xi_i|^{\bar{q}_i}\, \diff x\le\round{\int_{\Omega}|u_n\xi_i|^{\widetilde{q}_i}\,\diff x}^{\frac{\ol{q}_i}{\widetilde{q}_i}}|A_{k_{n+1}}|^{1-\frac{\ol{q}_i}{\widetilde{q}_i}}.
\end{equation}
Using H\"older's inequality and then invoking Proposition~\ref{P-S.inequality}, we have
\begin{align}\label{*2}
\nonumber\|u_n\xi_i\|_{L^{\widetilde{q}_i}(\Omega)}\leq C_7\|u_n\xi_i\|_{L^{(p_i^-)_s^\ast}(\Omega)}&
\le  C_8\round{ \int_{\Bbb R^N}\int_{\Bbb R^N}
\frac{|u_n(x)\xi_i(x)-u_n(y)\xi_i(y)|^{p_i^-}}{|x-y|^{N+sp_i^-}}
\,\diff x \diff y}^{\frac{1}{p_i^-}}.
\end{align}
Hence,
\begin{equation}\label{un.loc}
\|u_n\xi_i\|^{p_i^-}_{L^{\widetilde{q}_i}(\Omega)}\leq  C_9\round{ \int_{\Bbb R^N}\int_{\Bbb R^N} \frac{|u_n(x)\xi_i(x)-u_n(y)\xi_i(y)|^{p_i^-}}{|x-y|^{N+sp_i^-}}
	\,\diff x \diff y}=C_9\round{J_1+2J_2},
\end{equation}
where
$$J_1:=\int_{B_i}\int_{B_i}
\frac{|u_n(x)\xi_i(x)-u_n(y)\xi_i(y)|^{p_i^-}}{|x-y|^{N+sp_i^-}}
\,\diff x \diff y$$
and
$$J_2:=\int_{\mathbb{R}^N\setminus B_i}\int_{B_i}
\frac{|u_n(x)\xi_i(x)-u_n(y)\xi_i(y)|^{p_i^-}}{|x-y|^{N+sp_i^-}}
\,\diff x \diff y.$$
We have
\begin{align*}
J_1&=\int_{B_i}\int_{B_i}\frac{|(u_n(x)-u_n(y))\xi_i(x)+u_n(y)(\xi_i(x)-\xi_i(y))|^{p_i^-}}{|x-y|^{N+sp_i^-}}
\,\diff x \diff y\\
&\le 2^{p_i^-}\int_{B_i}\int_{B_i}
\frac{|u_n(x)-u_n(y)|^{p_i^-}}{|x-y|^{N+sp_i^-}} \,\diff x \diff y\\
&\hspace{3cm}+
2^{p_i^-}\|\nabla
\xi_i\|_{\infty}^{p_i^-}\int_{B_i}\round{\int_{B_i}
\frac{\diff x}{|x-y|^{N+(s-1)p_i^-}}} |u_n(y)|^{p_i^-} \,\diff y.
\end{align*}
Combining this with the estimate $$\int_{B_i}
\frac{\diff x}{|x-y|^{N+(s-1)p_i^-}}\leq\int_{B_R(0)}
\frac{\diff z}{|z|^{N+(s-1)p_i^-}}=\frac{\omega_NR^{(1-s)p_i^-}}{(1-s)p_i^-}\leq \frac{\omega_NR^{(1-s)p^+}}{(1-s)p^-},$$
we infer
\begin{align}\label{est.J1}
J_1&\le 2^{p_i^-}\int_{B_i}\int_{B_i}
\frac{|u_n(x)-u_n(y)|^{p_i^-}}{|x-y|^{N+sp_i^-}} \,\diff x \diff y+
C_{10}\int_{B_i}|u_n(y)|^{p_i^-} \,\diff y\notag\\
&\le 2^{p_i^-}\int_{B_i}\int_{B_i}
\frac{|u_n(x)-u_n(y)|^{p_i^-}}{|x-y|^{N+sp_i^-}} \,\diff x \diff y+
C_{10}\int_{A_{k_{n+1}}\cap B_i}u(y)^{p_i^-} \,\diff y.
\end{align}
On the other hand, we have
\begin{align}
J_2&=\int_{\mathbb{R}^N\setminus B_i}\round{\int_{B_i}
	\frac{|u_n(x)\xi_i(x)|^{p_i^-}}{|x-y|^{N+sp_i^-}}\diff x} \,\diff y\notag\\
&\leq\int_{\operatorname{supp}(\xi_i)\cap A_{k_{n+1}}}\round{\int_{\mathbb{R}^N\setminus B_i}
	\frac{\diff y}{|x-y|^{N+sp_i^-}}} |u_n(x)|^{p_i^-} \,\diff x.
\end{align}
From this and the estimate $\underset{x\in \operatorname{supp}(\xi_i)}{\sup}\int_{\mathbb{R}^N\setminus B_i}
\frac{\diff y}{|x-y|^{N+sp_i^-}}\leq \int_{|z|\geq d_i}\frac{\diff z}{|z|^{N+sp_i^-}}=\frac{\omega_N}{sp_i^-d_i^{sp_i^-}},$ where $d_i:=\operatorname{dist}(\mathbb{R}^N\setminus B_i,\operatorname{supp}(\xi_i))>0,$ we obtain
\begin{equation*}
J_2\leq C_{11}\int_{A_{k_{n+1}}\cap B_i}u(x)^{p_i^-} \,\diff x.
\end{equation*}
Combining this with \eqref{un.loc} and \eqref{est.J1}, we derive
\begin{align*}
\|u_n\xi_i\|^{p_i^-}_{L^{\widetilde{q}_i}(\Omega)}&\le C_{12}\int_{B_i}\int_{B_i}
\frac{|u_n(x)-u_n(y)|^{p_i^-}}{|x-y|^{N+sp_i^-}} \,\diff x \diff y+
C_{12}\int_{A_{k_{n+1}}\cap B_i}u(x)^{p_i^-} \,\diff x\notag\\
&\le C_{12}\int_{B_i}\int_{B_i}
\frac{|u_n(x)-u_n(y)|^{p_i^-}}{|x-y|^{N+sp_i^-}} \,\diff x \diff y+
C_{12}\left(1+k_*^{p_i^--q_i^+}\right)\int_{A_{k_{n+1}}}u^{q(x)} \,\diff x.
\end{align*}
From this and \eqref{N001} and \eqref{localization1}, noting $1+k_*^{p_i^--q_i^+}\leq 2\round{1+k_*^{-q^+}},$ we obtain
\begin{equation*}
\|u_n\xi_i\|^{p_i^-}_{L^{\widetilde{q}_i}(\Omega)}\le C_{13}\round{1+k_*^{-q^+}}2^{nq^+}Z_n.
\end{equation*}
Combining this with \eqref{*1} and \eqref{N002}, we have
\begin{align*}
\int_{\Omega} |u_n\xi_i|^{\ol{q}_i} \,\diff x\leq C_{14}\round{1+k_\ast^{-q^+}}^{\frac{\bar{q}_i}{p_i^-}}2^{\frac{n\bar{q}_iq^+}{p_i^-}}Z_n^{\frac{\bar{q}_i}{p_i^-}}\round{k_\ast^{-q^-}+k_\ast^{-q^+}}^{1-\frac{\bar{q}_i}{\widetilde{q}_i}}2^{nq^+\round{1-\frac{\bar{q}_i}{\widetilde{q}_i}}}Z_n^{1-\frac{\bar{q}_i}{\widetilde{q}_i}}.
\end{align*}
Hence,
\begin{equation*}
\int_{\Omega} |u_n\xi_i|^{\ol{q}_i} \,\diff x\leq C_{15}\round{k_\ast^{-q^-\round{1-\frac{\bar{q}_i}{\widetilde{q}_i}}}+k_\ast^{-q^+\round{1-\frac{\bar{q}_i}{\widetilde{q}_i}+\frac{\bar{q}_i}{p_i^-}}}}2^{nq^+\round{1-\frac{\bar{q}_i}{\widetilde{q}_i}+\frac{\bar{q}_i}{p_i^-}}}Z_n^{1-\frac{\bar{q}_i}{\widetilde{q}_i}+\frac{\bar{q}_i}{p_i^-}}.
\end{equation*}
This yields
\begin{equation}\label{Zn+1.loc}
\int_{\Omega} |u_n\xi_i|^{\ol{q}_i} \,\diff x\leq C_{15}(k_*^{-\gamma_1}+k_*^{-\gamma_2})b^n\round{Z_{n}^{1+\delta_1}+Z_{n}^{1+\delta_2}},
\end{equation}
where
$$0<\gamma_1:=\min_{1\leq i\leq m}\ q^-\round{1-\frac{q_i^+}{p_i^-}}<\gamma_2:=\max_{1\leq i\leq m}\ q^+\round{1-\frac{q_i^+}{\widetilde{q}_i}+\frac{q_i^+}{p_i^-}},$$
$$b:=\max_{1\leq i\leq m}\  2^{q^+\round{1-\frac{q_i^+}{\widetilde{q}_i}+\frac{q_i^+}{p_i^-}}}>1,$$
and
$$0<\delta_1:=\min_{1\leq i\leq m}\ \round{\frac{q_i^-}{p_i^-}-\frac{q_i^-}{\widetilde{q}_i}}\leq \delta_2:=\max_{1\leq i\leq m}\ \round{\frac{q_i^+}{p_i^-}-\frac{q_i^+}{\widetilde{q}_i}} .$$
From \eqref{decomposition.Zn+1} and \eqref{Zn+1.loc}, we obtain
$$ Z_{n+1}\le
C_{16}\round{k_*^{-\gamma_1}+k_*^{-\gamma_2}}b^n\round{Z_{n}^{1+\delta_1}+Z_{n}^{1+\delta_2}},\quad \forall n\in\mathbb{N}_0.
$$

\medskip
\textbf{Step 4. A-priori bounds.}

\noindent Invoking Lemma~\ref{leRecur}, we have that
\begin{equation}\label{apply.le.1}
Z_n\to 0\quad \text{as}\quad n\to \infty
\end{equation}
provided
\begin{equation}\label{apply.le.2}
Z_0\le
\min\left\{(2C_{16}(k_*^{-\gamma_1}+k_*^{-\gamma_2}))^{-\frac{1}{\delta_1}}b^{-\frac{1}{\delta_1^2}},
\round{2C_{16}\round{k_*^{-\gamma_1}+k_*^{-\gamma_2}}}^{-\frac{1}{\delta_2}}b^{-\frac{1}{\delta_1\delta_2}-\frac{\delta_2-\delta_1}{\delta_2^2}}\right\}.
\end{equation}
To specify a $k_*$ satisfying \eqref{apply.le.2}, we first note that
\begin{equation}\label{apply.le.2'}
Z_0= \int_{\Omega}(u-k_*)_+^{q(x)}\,\diff x\le
\int_{\Omega}|u|^{q(x)}\,\diff x.
\end{equation}
On the other hand,
$$
\begin{cases}
\int_{\Omega}|u|^{q(x)}\,\diff x\le (2C_{16})^{-\frac{1}{\delta_1}}(k_*^{-\gamma_1}+k_*^{-\gamma_2})^{-\frac{1}{\delta_1}}b^{-\frac{1}{\delta_1^2}},\\
\int_{\Omega}|u|^{q(x)}\,\diff x\le
(2C_{16})^{-\frac{1}{\delta_2}}(k_*^{-\gamma_1}+k_*^{-\gamma_2})^{-\frac{1}{\delta_2}}b^{-\frac{1}{\delta_1\delta_2}-\frac{\delta_2-\delta_1}{\delta_2^2}}
\end{cases}
$$
is equivalent to
\begin{equation}\label{apply.le.3}
\begin{cases}
k_*^{-\gamma_1}+k_*^{-\gamma_2}\le (2C_{16})^{-1}b^{-\frac{1}{\delta_1}}\round{\int_{\Omega}|u|^{q(x)}\,\diff x}^{-\delta_1},\\
k_*^{-\gamma_1}+k_*^{-\gamma_2}\le
(2C_{16})^{-1}b^{-\frac{1}{\delta_1}-\frac{\delta_2-\delta_1}{\delta_2}}\round{\int_{\Omega}|u|^{q(x)}\,\diff x}^{-\delta_2}.
\end{cases}
\end{equation}
Also,
$$
\begin{cases}
2k_*^{-\gamma_1} \le (2C_{16})^{-1}b^{-\frac{1}{\delta_1}-\frac{\delta_2-\delta_1}{\delta_2}}\min\left\{\round{\int_{\Omega}|u|^{q(x)}\,\diff x}^{-\delta_1}, \round{\int_{\Omega}|u|^{q(x)}\,\diff x}^{-\delta_2}\right\},\\
2k_*^{-\gamma_2} \le
(2C_{16})^{-1}b^{-\frac{1}{\delta_1}-\frac{\delta_2-\delta_1}{\delta_2}}\min\left\{\round{\int_{\Omega}|u|^{q(x)}\,\diff x}^{-\delta_1},
\round{\int_{\Omega}|u|^{q(x)}\,\diff x}^{-\delta_2}\right\}
\end{cases}
$$
is equivalent to
\begin{equation}\label{apply.le.4}
\begin{cases}
k_*\ge (4C_{16})^{\frac{1}{\gamma_1}}b^{\frac{1}{\gamma_1}\round{\frac{1}{\delta_1}+\frac{\delta_2-\delta_1}{\delta_2}}}\max\left\{\round{\int_{\Omega}|u|^{q(x)}\,\diff x}^{\frac{\delta_1}{\gamma_1}}, \round{\int_{\Omega}|u|^{q(x)}\,\diff x}^{\frac{\delta_2}{\gamma_1}}\right\},\\
k_*\ge
(4C_{16})^{\frac{1}{\gamma_2}}b^{\frac{1}{\gamma_2}\round{\frac{1}{\delta_1}+\frac{\delta_2-\delta_1}{\delta_2}}}\max\left\{\round{\int_{\Omega}|u|^{q(x)}\,\diff x}^{\frac{\delta_1}{\gamma_2}},
\round{\int_{\Omega}|u|^{q(x)}\,\diff x}^{\frac{\delta_2}{\gamma_2}}\right\}.
\end{cases}
\end{equation}
Thus, by choosing
$$k_*=\max\left\{(4C_{16})^{\frac{1}{\gamma_1}},(4C_{16})^{\frac{1}{\gamma_2}}\right\}b^{\frac{1}{\gamma_1}\round{\frac{1}{\delta_1}+\frac{\delta_2-\delta_1}{\delta_2}}}\max\left\{\round{\int_{\Omega}|u|^{q(x)}\,\diff x}^{\frac{\delta_1}{\gamma_2}},
\round{\int_{\Omega}|u|^{q(x)}\,\diff x}^{\frac{\delta_2}{\gamma_1}}\right\},$$
we have \eqref{apply.le.4} and hence, \eqref{apply.le.3} follows. Combining this and \eqref{apply.le.2'} we get \eqref{apply.le.2} and hence, \eqref{apply.le.1} holds. On the other hand, by the Lebesgue dominated convergence theorem we have $Z_n=\int_{\Omega}(u-k_n)_+^{q(x)}\,\diff x\to \int_{\Omega}(u-2k_*)_+^{q(x)}\,\diff x$ as $n\to \infty.$ Thus, we arrive at
$$\int_{\Omega}(u-2k_*)_+^{q(x)}\,dx=0,\quad \text{and hence,}\quad \underset{x\in\Omega}{\operatorname{ess}\ \sup}\ u(x)\le 2k_*.$$
By replacing $u$ with $-u$ in the above arguments, we aso obtain
$$\underset{x\in\Omega}{\operatorname{ess}\ \inf}\ (-u)(x)\le 2k_*.$$
Therefore
\begin{equation}\label{applying.le.5}
\|u\|_{L^{\infty}(\Omega)}\le
C\max\left\{\round{\int_{\Omega}|u|^{q(x)}\,\diff x}^{\frac{\delta_1}{\gamma_2}},
\round{\int_{\Omega}|u|^{q(x)}\,\diff x}^{\frac{\delta_2}{\gamma_1}}\right\},
\end{equation}
where $C$ is a positive constant independent of $u$. Finally, we notice the following relation between the norm and the modular on $L^{q(\cdot)}(\Omega)$:
\begin{equation*}
\int_{\Omega}|u|^{q(x)}\,\diff x\leq \max\left\{\|u\|_{L^{q(\cdot)}(\Omega)}^{q^+},\|u\|_{L^{q(\cdot)}(\Omega)}^{q^-}\right\}
\end{equation*}
in view of Proposition~\ref{norm-modular}. Combining this and \eqref{applying.le.5}, we derive \eqref{L^infity} and the proof is complete.
\end{proof}

\section{Application to the fractional $p(\cdot)$-Laplacian}

In this section, we apply a modified functional method used in \cite{TF, W0} taking into account a-priori bounds for solutions obtained in the previous section to obtain the existence of infinitely many small solutions for the following problem
\begin{equation}\label{eq.existence}
	\begin{cases}
		(-\Delta)_{p(x)}^su+|u|^{p(x)-2}u=f(x,u) \quad \text{in} \quad \Omega, \\
		u=0 \quad \text{in} \quad \Bbb R^N\setminus \Omega,
	\end{cases}
\end{equation}
where $\Omega$ is a bounded Lipschitz domain in $\Bbb R^N$ and $s,p,f$ are as in Section~\ref{Sec.a-prioribound}. Furthermore, we assume in addition that:
\begin{enumerate}
\item[\AssF2] There exists a constant $t_{0}>0$ such that for a.e. $x\in\Omega$ and all $t$ with $0<\abs{t}< t_{0}$, $f$ is odd in $t$ and $p^-F(x,t) - f(x,t)t > 0,$ where $F(x,t):=\int_{0}^{t}f(x,\tau)\,\diff \tau.$
\item[\AssF3] $\lim_{t\to 0}{\frac{f(x,t)}{\abs{t}^{p^{-}-2}t}}=+\infty$ uniformly for a.e. $x\in\Omega$.
\end{enumerate}
The next theorem is our main result in this section.

\begin{theorem}\label{theorem2}
Let $s,p$ be as in Theorem \ref{a-priori.bound} and let \AssF1--\AssF3 hold. Then problem \eqref{eq.existence} has a sequence
of weak solutions $\{u_{n}\}$ satisfying
$\norm{u_{n}}_{L^{\infty}(\Omega)}\to 0$ as $n\to\infty$.
\end{theorem}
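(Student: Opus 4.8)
The plan is to run the modified functional method of \cite{W0,TF}, with the a-priori bound of Theorem~\ref{a-priori.bound} playing the role of the classical elliptic regularity used in \cite{W0}. Since \AssF2 and \AssF3 only describe $f$ near $t=0$ (they single out the concave regime: $f$ behaves like a sublinear power relative to $|t|^{p^--1}$, yet $p^-F(x,t)-f(x,t)t>0$), I would first truncate. Choose a smooth cut-off and define a Carath\'eodory function $\widetilde f$ with $\widetilde f(x,t)=f(x,t)$ for $|t|\le t_0/2$, odd in $t$, with $\widetilde f(x,\cdot)$ supported in $[-t_0,t_0]$ (so $\widetilde F(x,\cdot)$ is bounded), still satisfying the growth \AssF1 and inheriting from \AssF3 that $\widetilde F(x,t)\ge \frac{\lambda}{p^-}|t|^{p^-}$ near $t=0$ for every prescribed $\lambda>0$. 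To $\widetilde f$ associate the modified problem $(-\Delta)_{p(x)}^su+|u|^{p(x)-2}u=\widetilde f(x,u)$ with energy
\[
\mathcal{J}(u):=\int_{\mathbb{R}^N}\!\!\int_{\mathbb{R}^N}\frac{|u(x)-u(y)|^{p(x,y)}}{p(x,y)\,|x-y|^{N+sp(x,y)}}\,\diff x\,\diff y+\int_\Omega\frac{|u(x)|^{p(x)}}{p(x)}\,\diff x-\int_\Omega\widetilde F(x,u(x))\,\diff x
\]
on the reflexive separable Banach space $X:=W_0^{s,p(\cdot,\cdot)}(\Omega)$.

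Next I would set up the variational machinery for $\mathcal J$. Using \eqref{compact.imb} and the growth of $\widetilde f$, the functional $\mathcal J\in C^1(X,\mathbb{R})$ is even with $\mathcal J(0)=0$; since $\widetilde F$ is bounded, $\mathcal J(u)\ge \frac{1}{p^+}\widetilde\rho(u)-C$, which by Proposition~\ref{norm-modular2}(iii)--(iv) forces $\mathcal J$ to be coercive and bounded below. Hence any Palais--Smale sequence is bounded, so a subsequence converges weakly in $X$ and strongly in $L^{\widetilde q(\cdot)}(\Omega)$ by the compact imbedding $W_0^{s,p(\cdot,\cdot)}(\Omega)\hookrightarrow\hookrightarrow L^{\widetilde q(\cdot)}(\Omega)$ of Corollary~\ref{special.imb} (valid because $\widetilde q(x)=\max\{p(x),q(x)\}<p_s^\ast(x)$ on $\ol{\Omega}$); using the growth of $\widetilde f$ and of $|t|^{p(x)-2}t$ and then the $(S_+)$-type property of $u\mapsto(-\Delta)_{p(x)}^su+|u|^{p(x)-2}u$ one upgrades to strong convergence in $X$, so $\mathcal J$ satisfies (PS). The decisive point is the negativity from \AssF3: for each $k\in\mathbb N$ there exist a $k$-dimensional subspace $X_k\subset X$ and $\rho_k>0$ with $\sup_{u\in X_k,\,\|u\|=\rho_k}\mathcal J(u)<0$ (on $X_k$ all norms are equivalent, so $\mathcal J(u)\le \frac{1}{p^-}\|u\|^{p^-}-c_k\lambda\|u\|^{p^-}<0$ once $\lambda$ is large and $\|u\|$ small). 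Thus $\mathcal J$ meets the hypotheses of the symmetric critical point theorem of Clark type used in \cite{W0} (cf.\ \cite{H}), which yields a sequence $\{u_n\}\subset X$ of critical points with $\mathcal J(u_n)<0$, $\mathcal J(u_n)\to0$, and $u_n\to0$ in $X$; in particular $u_n\to0$ in $L^{\widetilde q(\cdot)}(\Omega)$.

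Finally I would upgrade convergence and return to \eqref{eq.existence}. Each $u_n$ is a weak solution of $(-\Delta)_{p(x)}^su_n=h(x,u_n)$ with $h(x,t):=\widetilde f(x,t)-|t|^{p(x)-2}t$; since $p(x)\le q(x)$ one has $|h(x,t)|\le C(1+|t|^{q(x)-1})$, i.e.\ $h$ obeys \AssF1, so Theorem~\ref{a-priori.bound} applies and gives $\|u_n\|_{L^\infty(\Omega)}\le C\max\{\|u_n\|_{L^{\widetilde q(\cdot)}(\Omega)}^{\tau_1},\|u_n\|_{L^{\widetilde q(\cdot)}(\Omega)}^{\tau_2}\}$ with $C,\tau_1,\tau_2$ independent of $n$; hence $\|u_n\|_{L^\infty(\Omega)}\to0$. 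For $n$ large enough $\|u_n\|_{L^\infty(\Omega)}\le t_0/2$, so $\widetilde f(x,u_n(x))=f(x,u_n(x))$ a.e.\ and each such $u_n$ is a weak solution of the original problem \eqref{eq.existence}; discarding the finitely many exceptional indices leaves the claimed sequence. I expect the main obstacle to lie in the variational framework on $X$ that is not developed in the excerpt --- in particular the $(S_+)$-property behind the (PS) condition, and the variable-exponent bookkeeping needed both to build $\widetilde f$ and to produce the negative subspaces $X_k$ --- together with the (routine but essential) check that the truncated equation still satisfies \AssF1 with constants uniform in $n$, which is precisely what propagates the $L^\infty$-smallness along the sequence.
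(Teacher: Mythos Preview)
Your overall architecture is the paper's: truncate $f$ near $t=0$, apply a symmetric critical point theorem to the modified energy, then feed the resulting small-energy critical points into Theorem~\ref{a-priori.bound} to force $\|u_n\|_{L^\infty(\Omega)}\to 0$ and thereby recover solutions of the untruncated problem. The last paragraph of your outline is correct and matches the paper's endgame.

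The gap is in the clause ``which yields a sequence $\{u_n\}\subset X$ of critical points with $\mathcal J(u_n)<0$, $\mathcal J(u_n)\to0$, and $u_n\to0$ in $X$.'' The Heinz--Wang result you cite (Lemma~\ref{lemma_last} in the paper) only produces critical \emph{values} $c_n<0$ with $c_n\to0$; it does \emph{not} assert that the corresponding critical points converge to $0$. To get $u_n\to0$ one passes via (PS) to a subsequential limit $\bar u$, which is a critical point at level $0$, and must then prove $\bar u=0$. This is precisely where \AssF2 is used --- and you never invoke \AssF2 anywhere. The paper does not take a compactly supported cut-off but instead sets $\widetilde F(x,t)=\varrho(t)F(x,t)+(1-\varrho(t))\beta|t|^{p^-}$ with a carefully chosen small $\beta>0$; this blend is engineered so that (i) $p^-\widetilde F(x,t)-\widetilde f(x,t)t\ge 0$ for \emph{all} $t$, with equality only when $t=0$ or $|t|\ge 2t_2$, and (ii) $\beta<1/p^-$. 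From $\widetilde E(\bar u)=\langle\widetilde E'(\bar u),\bar u\rangle=0$ one then squeezes both the $(\frac{1}{p^-}-\frac{1}{p(\cdot,\cdot)})$-weighted Gagliardo integral and $\int_\Omega(p^-\widetilde F-\widetilde f\,\bar u)\,\diff x$ to zero, so that a.e.\ either $\bar u(x)=0$ or $|\bar u(x)|\ge 2t_2$, and the smallness of $\beta$ rules out the second alternative.

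Your compactly supported $\widetilde f$ makes coercivity trivial, which is a genuine simplification over the paper's argument, but it is not clear that the inequality $p^-\widetilde F-\widetilde f\,t\ge 0$ survives in your transition zone $t_0/2<|t|<t_0$, and you have no analogue of the $\beta$-mechanism to exclude $|\bar u|$ large. Either redesign the truncation along the paper's lines so that \AssF2 extends to all $t$, or replace the Heinz--Wang lemma by a version (e.g.\ Kajikiya's) whose conclusion already contains $u_n\to 0$; as written, the step is unjustified.
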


We shall use a variational argument to determine weak solutions of problem \eqref{eq.existence}. Define the functional $\Phi: W_0^{s,p(\cdot,\cdot)}(\Omega) \to \Bbb R$ as
\begin{equation}\label{Phi}
\Phi(u)=\int_{\mathbb{R}^N}\int_{\mathbb{R}^N}
\frac{|u(x)-u(y)|^{p(x,y)}}{p(x,y)|x-y|^{N+sp(x,y)}}
\,\diff x \diff y+\int_{\Omega}\frac{1}{p(x)}|u|^{p(x)}\diff x,\quad u\in W_0^{s,p(\cdot,\cdot)}(\Omega).
\end{equation}
By a standard argument, invoking the imbedding \eqref{compact.imb}, we can show that  $\Phi \in C^{1}\round{W_0^{s,p(\cdot,\cdot)}(\Omega),\Bbb R}$ and its Fr\'echet derivative $\Phi^{\prime}: W_0^{s,p(\cdot,\cdot)}(\Omega)\to \left[W_0^{s,p(\cdot,\cdot)}(\Omega)\right]^\ast$ is given by
\begin{align}\label{e:Phidef}
\langle {\Phi}^{\prime}(u),v\rangle= \int_{\mathbb{R}^N}\int_{\mathbb{R}^N}
&\frac{|u(x)-u(y)|^{p(x,y)-2}(u(x)-u(y))(v(x)-v(y))}{|x-y|^{N+sp(x,y)}}
\,\diff x \diff y\notag\\
&\hspace{3cm}+\int_{\Omega}|u|^{p(x)-2}uv\diff x,\quad \forall u,v\in W_0^{s,p(\cdot,\cdot)}(\Omega).
\end{align}
Here $\left[W_0^{s,p(\cdot,\cdot)}(\Omega)\right]^\ast$ and $\langle \cdot,\cdot \rangle$ denote the dual space of $W_0^{s,p}(\Omega)$ and the duality pairing between $W_0^{s,p}(\Omega)$ and $\left[W_0^{s,p(\cdot,\cdot)}(\Omega)\right]^\ast,$ respectively. The following $(S_+)$ property of $\Phi^{\prime}$ is useful to show the energy functionals satisfy the Palais-Smale condition (the $(\textup{PS})$ condition, for short).

\begin{lemma}\label{s+}{\rm (\cite{Bahrouni.DCDS2018})}
	The operator $\Phi^{\prime}$ is a mapping of type $(S_+)$, i.e., if
	$u_{n}\rightharpoonup u$ in $W_0^{s,p(\cdot,\cdot)}(\Omega)$ and $\lim\sup_{n\to
		\infty}\scal{\Phi^\prime(u_{n}), u_{n}-u}\le 0$, then
	$u_{n}\to u$ in $W_0^{s,p(\cdot,\cdot)}(\Omega)$ as $n\to\infty$.
\end{lemma}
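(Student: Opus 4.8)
The plan is to use the standard scheme for $(S_+)$ operators of $p(\cdot)$-Laplacian type: split $\Phi'=A+B$, where
\begin{align*}
\langle A(u),v\rangle &:=\int_{\mathbb{R}^N}\int_{\mathbb{R}^N}\frac{|u(x)-u(y)|^{p(x,y)-2}(u(x)-u(y))(v(x)-v(y))}{|x-y|^{N+sp(x,y)}}\,\diff x\,\diff y, \\
\langle B(u),v\rangle &:=\int_{\Omega}|u|^{p(x)-2}uv\,\diff x,
\end{align*}
exploit the \emph{compactness} of $B$ (coming from the imbedding \eqref{compact.imb}) together with the \emph{strict monotonicity} of $A$, and finally upgrade the resulting weak/modular information to strong convergence via the modular--norm relations.

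First I would note that, since $p(x,x)<p_s^*(x)$ for all $x\in\ol\Omega$, Corollary~\ref{special.imb} (or \eqref{compact.imb}) gives $W_0^{s,p(\cdot,\cdot)}(\Omega)\hookrightarrow\hookrightarrow L^{p(\cdot)}(\Omega)$, so $u_n\to u$ in $L^{p(\cdot)}(\Omega)$, and hence in $L^{p(\cdot)}(\mathbb{R}^N)$ since both functions vanish outside $\Omega$. Because $\{u_n\}$ is bounded, $\{|u_n|^{p(\cdot)-2}u_n\}$ is bounded in $L^{p'(\cdot)}(\Omega)$, so Proposition~\ref{lem1} yields $\langle B(u_n),u_n-u\rangle\to0$ and $\langle B(u),u_n-u\rangle\to0$. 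Since $\Phi'(u)$ is a fixed element of the dual and $u_n\rightharpoonup u$, also $\langle\Phi'(u),u_n-u\rangle\to0$. Writing $\langle A(u_n),u_n-u\rangle=\langle\Phi'(u_n),u_n-u\rangle-\langle B(u_n),u_n-u\rangle$ and using the hypothesis $\limsup_n\langle\Phi'(u_n),u_n-u\rangle\le0$, together with $\langle A(u),u_n-u\rangle=\langle\Phi'(u),u_n-u\rangle-\langle B(u),u_n-u\rangle\to0$, we obtain
\[
\limsup_{n\to\infty}\langle A(u_n)-A(u),u_n-u\rangle\le0.
\]

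Now set $g_n(x,y):=u_n(x)-u_n(y)$, $g(x,y):=u(x)-u(y)$ and $\diff\nu(x,y):=|x-y|^{-N-sp(x,y)}\,\diff x\,\diff y$ on $\mathbb{R}^N\times\mathbb{R}^N$. Since $\bigl(|g_n|^{p(x,y)-2}g_n-|g|^{p(x,y)-2}g\bigr)(g_n-g)\ge0$ pointwise, the $\limsup$ above is a genuine limit equal to $0$. To turn this into modular convergence, split $\mathbb{R}^N\times\mathbb{R}^N=\mathcal{P}\cup\mathcal{N}$ with $\mathcal{P}=\{p(x,y)\ge2\}$, $\mathcal{N}=\{p(x,y)<2\}$. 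On $\mathcal{P}$ the inequality $|a-b|^{p}\le c_p(|a|^{p-2}a-|b|^{p-2}b)(a-b)$, with $c_p$ bounded uniformly for $p\in[p^-,p^+]$, gives $\int_{\mathcal{P}}|g_n-g|^{p(x,y)}\,\diff\nu\to0$ at once. On $\mathcal{N}$ I would use $|a-b|^{p}\le c_p\bigl[(|a|^{p-2}a-|b|^{p-2}b)(a-b)\bigr]^{p/2}(|a|^{p}+|b|^{p})^{(2-p)/2}$ and then apply the generalized Hölder inequality on $(\mathcal{N},\nu)$ with the conjugate variable exponents $2/p(\cdot,\cdot)$ and $2/(2-p(\cdot,\cdot))$: the modular of the first factor is $\int_{\mathcal{N}}(|g_n|^{p(x,y)-2}g_n-|g|^{p(x,y)-2}g)(g_n-g)\,\diff\nu\to0$, while the modular of the second factor is $\int_{\mathcal{N}}(|g_n|^{p(x,y)}+|g|^{p(x,y)})\,\diff\nu$, which is bounded since $\{u_n\}$ is bounded; the modular--norm relations (as in Propositions~\ref{norm-modular} and~\ref{norm-modular2}) then force $\int_{\mathcal{N}}|g_n-g|^{p(x,y)}\,\diff\nu\to0$. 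Adding the two pieces gives $[u_n-u]_{s,p(\cdot,\cdot),\mathbb{R}^N}\to0$, and combined with $u_n\to u$ in $L^{p(\cdot)}(\mathbb{R}^N)$ and Proposition~\ref{norm-modular2} we conclude $u_n\to u$ in $W_0^{s,p(\cdot,\cdot)}(\Omega)$.

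The step I expect to be the main obstacle is the analysis on $\mathcal{N}$: on the region $1<p(x,y)<2$ one must run a Simon-type inequality with a genuinely variable exponent and then invoke a variable-exponent Hölder inequality on the non-product measure $\nu$, keeping careful track of the passage between modular and norm so that smallness of the modular transfers to smallness of the norm. Uniform control of all constants is not an issue because $1<p^-\le p(x,y)\le p^+<\infty$, but the bookkeeping there is delicate; everything else follows the routine $(S_+)$ argument.
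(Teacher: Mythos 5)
The paper does not actually prove Lemma~\ref{s+}: it is quoted verbatim from \cite{Bahrouni.DCDS2018}, so there is no internal argument to measure you against. Your blind proof is a correct, self-contained argument and follows the standard $(S_+)$ scheme for this operator: the splitting $\Phi'=A+B$, the use of the compact imbedding (Corollary~\ref{special.imb} with $r=p(\cdot,\cdot)|_{x=y}$, which is legitimate since $p(x,x)<p_s^*(x)$ automatically) to kill the local term $B$, the reduction to $\limsup_n\langle A(u_n)-A(u),u_n-u\rangle\le 0$ and, by pointwise monotonicity of the integrand, to convergence of that pairing to $0$, and finally the Simon-type inequalities on $\{p\ge 2\}$ and $\{1<p<2\}$ to convert this into convergence of the Gagliardo modular of $u_n-u$, hence of the seminorm. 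Two technical points deserve explicit care in a written version. First, on $\mathcal{N}=\{p(x,y)<2\}$ the conjugate exponent $2/(2-p(\cdot,\cdot))$ is in general unbounded (where $p(x,y)\uparrow 2$), so Proposition~\ref{norm-modular}(iii)--(iv) as stated (which assumes a bounded exponent) does not literally apply to the second H\"older factor; the needed implication ``modular bounded $\Rightarrow$ norm bounded'' should instead be justified directly, e.g.\ by convexity of the modular ($\rho(f/M)\le\rho(f)/M$ for $M\ge1$), while for the first factor the exponent $2/p(\cdot,\cdot)\le 2/p^-<\infty$ is bounded, so ``modular $\to0\Rightarrow$ norm $\to0$'' is safe. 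Second, the constants in the elementary inequalities involve $(p-1)^{-1}$ on $\mathcal N$, which is harmless only because $p^->1$; it is worth saying so. With these details spelled out, your proof is complete and is essentially the same elementary-monotonicity route as in the cited reference, rather than an alternative argument via uniform convexity of the modular, which is the other common way to prove this lemma.
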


\noindent The following abstract result is essential in our argument.

\begin{lemma}\label{lemma_last}{\rm (\cite{H,W0})}
	Let $X$ be a Banach space. Let $I\in C^{1}(X,{\Bbb R})$ such that $I$ satisfies the $(PS)$ condition, is even and bounded from below, and $I(0)=0$. If for any $n\in{\Bbb N}$, there
	exist an $n$-dimensional subspace $X_{n}$ and $r_{n}>0$ such
	that
	$$
	\sup_{X_{n}\cap S_{r_{n}}}{I}<0,
	$$
	where $S_{r}:=\left\{ u \in X : \norm{u}_{X}=r\right\}$, then
	$I$ has a sequence of critical values $c_{n}<0$ satisfying $c_{n}\to
	0$ as $n \to \infty$.
\end{lemma}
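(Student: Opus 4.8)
\textbf{Proof plan for Lemma \ref{lemma_last}.}
The plan is to deduce this statement from Clark's theorem (the symmetric version of the mountain-pass circle of ideas for functionals bounded below), using the Krasnoselskii genus as the topological index. Since $I$ is even, bounded from below, with $I(0)=0$, satisfies $(PS)$, and possesses arbitrarily high-dimensional subspaces on which it is negative on a small sphere, the standard genus-based minimax construction produces a sequence of negative critical values accumulating at $0$. I will organize the argument so that it is self-contained modulo the deformation lemma and the elementary properties of the genus.

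First I would recall the Krasnoselskii genus $\gamma(A)$ for closed symmetric subsets $A\subset X\setminus\{0\}$ and its key properties: monotonicity, subadditivity, the fact that $\gamma(A)\le\dim X$ when $A$ is contained in a finite-dimensional space, invariance under odd homeomorphisms, and that if $A$ is compact and $0\notin A$ then $\gamma$ of a small symmetric neighborhood of $A$ equals $\gamma(A)$. Then, for each $n\in\N$, I would set
$$
\Gamma_n:=\left\{A\subset X: A \text{ closed, symmetric}, 0\notin A, \gamma(A)\ge n\right\},\qquad c_n:=\inf_{A\in\Gamma_n}\sup_{u\in A}I(u).
$$
Since $I$ is bounded from below, each $c_n>-\infty$, and $\Gamma_{n+1}\subset\Gamma_n$ forces $c_n\le c_{n+1}$. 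Using the hypothesis, for fixed $n$ take the $n$-dimensional subspace $X_n$ and radius $r_n$ with $\sup_{X_n\cap S_{r_n}}I<0$; since $X_n\cap S_{r_n}$ is homeomorphic to the unit sphere in $\R^n$ by an odd homeomorphism, $\gamma(X_n\cap S_{r_n})=n$, so $X_n\cap S_{r_n}\in\Gamma_n$ and therefore $c_n\le\sup_{X_n\cap S_{r_n}}I<0$. Thus every $c_n$ is negative and finite, and the sequence $(c_n)$ is nondecreasing and bounded above by $0$, hence convergent; denote its limit by $c\le 0$.

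Next I would show each $c_n$ is a critical value and that $c=0$. For the criticality, one argues by contradiction: if some $c_n$ were a regular value, then using $(PS)$ the set $K_{c_n}=\{u:I(u)=c_n,\ I'(u)=0\}$ is empty, and the quantitative deformation lemma (applied to the even functional $I$, so the deformation can be taken odd) gives $\e>0$ and an odd homeomorphism $\eta$ with $\eta(I^{c_n+\e})\subset I^{c_n-\e}$, where $I^a:=\{I\le a\}$. Picking $A\in\Gamma_n$ with $\sup_A I<c_n+\e$, the set $\eta(A)$ is again closed, symmetric, avoids $0$ (one checks $0$ stays fixed and stays out since $I(0)=0>c_n+\e$ is false—here one uses $c_n<0$ to ensure the deformation region is bounded away from $0$), has $\gamma(\eta(A))\ge\gamma(A)\ge n$, and satisfies $\sup_{\eta(A)}I\le c_n-\e$, contradicting the definition of $c_n$. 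For $c=0$: if instead $c<0$, then the sublevel set $I^{c/2}$ does not contain $0$; by $(PS)$ the critical set $K:=\bigcup_{a\le c/2}K_a$ at levels in a suitable compact range is compact and symmetric with finite genus, say $\gamma(K)=N$, while—because $c_n\uparrow c$—for $n>N$ one can, via the deformation lemma near the cluster of critical values, produce a set in $\Gamma_n$ whose sup is at most $c/2$, forcing $\gamma$ of a set bounded away from $0$ to exceed $N$ at a level where the only critical points have genus $\le N$, a contradiction. Hence $c_n\to 0$, and relabeling (discarding repetitions, noting that if $c_n=c_{n+1}=\dots=c_{n+j}$ then $\gamma(K_{c_n})\ge j+1$ produces infinitely many critical points at that level) yields the claimed sequence of critical values $c_n<0$ with $c_n\to 0$.

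The main obstacle is the bookkeeping in the last step: one must carefully handle the possibility that the $c_n$ coincide for several consecutive indices and must correctly invoke a version of the deformation lemma respecting the $\Z_2$-symmetry near an \emph{accumulation} of critical levels rather than a single one, while keeping all the deformed sets uniformly bounded away from $0$—this is exactly where the sign condition $c_n<0$ (ultimately coming from the hypothesis $\sup_{X_n\cap S_{r_n}}I<0$) and $I(0)=0$ are used in an essential way. The compactness input for all of this is the $(PS)$ condition together with boundedness from below, which guarantees that the relevant critical sets are compact so that the genus is finite and the deformation lemma applies.
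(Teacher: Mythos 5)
The paper itself offers no proof of this lemma: it is quoted verbatim from Heinz \cite{H} and Wang \cite{W0}, and your genus-based Clark-type minimax argument is exactly the approach taken in those sources, so in substance you are reproducing the intended proof rather than finding a different one. The construction of $c_n=\inf_{A\in\Gamma_n}\sup_A I$, the bound $c_n\le\sup_{X_n\cap S_{r_n}}I<0$ via $\gamma(X_n\cap S_{r_n})=n$, and the criticality of each $c_n$ by an odd deformation (using $c_n<0$ and $I(0)=0$ to keep $0$ out of the deformed sets) are all correct and standard. The one place where your sketch, as written, would not go through is the final step $c_n\to 0$: producing a set in $\Gamma_n$ with $\sup I\le c/2$ contradicts nothing, because $c_n\le c<c/2<0$ already. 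The correct mechanism is to work at the single limit level $c$: assuming $c<0$, the set $K_c$ is compact, symmetric and avoids $0$, so $\gamma(K_c)=N<\infty$ and there is a closed symmetric neighborhood $U$ with $\gamma(U)=N$; the equivariant deformation lemma gives $\epsilon>0$ and an odd homeomorphism $\eta$ with $\eta(I^{c+\epsilon}\setminus U)\subset I^{c-\epsilon}$. Choosing $n$ with $c_n>c-\epsilon$ and $A\in\Gamma_{n+N}$ with $\sup_A I<c+\epsilon$ (possible since $c_{n+N}\le c$), the subadditivity of the genus gives $\gamma\bigl(\overline{A\setminus U}\bigr)\ge n$, hence $\eta\bigl(\overline{A\setminus U}\bigr)\in\Gamma_n$ with $\sup I\le c-\epsilon<c_n$, the desired contradiction. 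With that replacement your plan is complete; note also that possible repetitions among the $c_n$ are harmless for the statement as phrased (they are still negative critical values tending to $0$), so the Ljusternik--Schnirelman multiplicity bookkeeping you worry about is not actually needed here.
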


\begin{proof}[Proof of Theorem~\ref{theorem2}] In order to get the desired properties of the energy functional as in Lemma~\ref{lemma_last}, we modify the nonlinear term $f$ as follows. By ($\textup{F}2$) and ($\textup{F}3$), we find $t_1\in (0,t_0)$ such that
\begin{equation}\label{F1.near.0}
F(x,t)\geq |t|^{p^-}\quad \text{for a.e.}\ \ x\in\Omega\ \ \text{and all}\ \ |t|<t_1.
\end{equation}
Fix $t_{2}\in \round{0,\, t_1/2}$ and let $\varrho \in C^{1}({\Bbb R}, {\Bbb
R})$ be such that $\varrho$ is even, $\varrho(t)=1$ for $|t|\le t_{2}$, $\varrho(t)=0$ for
$|t| \ge 2t_{2}$, $|\varrho^{\prime}(t)|\le 2/t_{2}$, and
$\varrho^{\prime}(t)t\le 0.$ We then define the modified function $\widetilde{f}: \Omega\times\mathbb{R}\to \mathbb{R}$ as
\begin{equation*}
{\widetilde f}(x,t):=\frac{\partial}{\partial t}{\widetilde
	F}(x,t),
\end{equation*}
where
$${\widetilde F}(x,t):=\varrho(t)F(x,t)+(1-\varrho(t))\beta |t|^{p^{-}}$$
for some fixed $\beta\in \round{0,\min\left\{\frac{1}{p^-},\frac{1}{p^+2^{p^-}C_{imb}^{p^-}}\right\}}$ with $C_{imb}$ being the imbedding constant for the imbedding $W_0^{s,p(\cdot,\cdot)}(\Omega)\hookrightarrow L^{p^-}(\Omega).$ Clearly, ${\widetilde F}$ is even in $t$,
\begin{equation}\label{f.tilde}
{\widetilde f}(x,t)=\varrho^{\prime}(t)F(x,t)+\varrho(t)f(x,t)-\varrho^{\prime}(t)\beta|t|^{p^-}+(1-\varrho(t))\beta p^-|t|^{p^--2}t,
\end{equation}
and
\begin{align*}
p^-{\widetilde F}(x,t)-{\widetilde f}(x,t)t=\varrho(t)\big[p^-F(x,t)-f(x,t)t\big]-\varrho'(t)t\big[F(x,t)-\beta|t|^{p^-}\big].
\end{align*}
Thus, the definition of $\varrho$ and \eqref {F1.near.0} yield
\begin{equation}\label{F.tilde.nonegative1}
p^-{\widetilde F}(x,t)-{\widetilde f}(x,t)t\geq 0\quad \text{for a.e.}\ x\in\Omega\ \ \text{and all}\ \ t\in\mathbb{R},
\end{equation}
and
\begin{equation}\label{F.tilde.nonegative2}
p^-{\widetilde F}(x,t)-{\widetilde f}(x,t)t=0\quad \text{if and only if}\ \ t=0 \ \ \text{or}\ \ |t|\geq 2t_2.
\end{equation}
We now consider the modified energy functional $\widetilde{E}:\ W_0^{s,p(\cdot,\cdot)}(\Omega)\to\mathbb{R}$ given by
$$\widetilde{E}(u):=\Phi(u)-\int_{\Omega}{{\widetilde F}(x,u)}\,\diff x,\quad u\in W_0^{s,p(\cdot,\cdot)}(\Omega),$$
where $\Phi$ is defined as in \eqref{Phi}. By the definition of $\varrho$ and ($\textup{F}1$), we deduce from \eqref{f.tilde} that there exists a positive constant $C$ such that
\begin{equation}\label{est.tilde}
{\widetilde F}(x,t)\leq C+\beta|t|^{p^-}\ \ \text{and}\quad |{\widetilde f}(x,t)|\leq C\round{1+|t|^{p^--1}}\ \text{for a.e.}\ x\in\Omega\ \text{and all}\ t\in\mathbb{R}.
\end{equation}
Then by a standard argument, invoking the imbedding $W_0^{s,p(\cdot,\cdot)}(\Omega)\hookrightarrow L^{p^-}(\Omega)$ and the differentiability of $\Phi$, we can show that $\widetilde{E}\in C^1\round{W_0^{s,p(\cdot,\cdot)}(\Omega),\mathbb{R}}$. Obviously, $\widetilde{E}$ is even on $W_0^{s,p(\cdot,\cdot)}(\Omega)$ and $\widetilde{E}(0)=0.$

Next, we shall show that $\widetilde{E}$ is coercive on $W_0^{s,p(\cdot,\cdot)}(\Omega).$
By Proposition~\ref{norm-modular2}, the relations \eqref{equivalent.norms} and \eqref{est.tilde}, we have
\begin{align*}
\widetilde{E}(u)&\geq \frac{1}{p^+}\round{|u|_{s,p}^{p^-}-1}-\beta\|u\|^{p^-}_{L^{p^-}(\Omega)}-C|\Omega|\\
&\geq \frac{1}{p^+2^{p^-}}\|u\|_{s,p}^{p^-}-\beta C_{imb}^{p^-}\|u\|_{s,p}^{p^-}-C|\Omega|-\frac{1}{p^+}.
\end{align*}
This infers the coerciveness and the boundedness from below of $\widetilde{E}$ on $W_0^{s,p(\cdot,\cdot)}(\Omega)$ since $\beta\in \round{0,\min\left\{\frac{1}{p^-},\frac{1}{p^+2^{p^-}C_{imb}^{p^-}}\right\}}.$ To see that $\widetilde{E}$ satisfies the $(\textup{PS})$ condition, we first note that the operator $u\mapsto \int_\Omega\widetilde{f}(x,u)\diff x$ is compact due to the subcritical growth condition \eqref{est.tilde} and the compactness of the imbedding  $W_0^{s,p(\cdot,\cdot)}(\Omega)\hookrightarrow L^{p^-}(\Omega)$.  From  this, the coerciveness of $\widetilde{E}$ and the $(S_+)$ property of $\Phi'$ (see Lemma~\ref{s+}), we easily deduce that $\widetilde{E}$ satisfies the $(\textup{PS})$ condition.

We now verify that $\widetilde{E}$ fulfills the last condition in Lemma~\ref{lemma_last}. Let $n\in\mathbb{N}$ be arbitrary and fixed.  Let $\phi_1,\cdots,\phi_n$ be  functions in $C_c^\infty(\Omega),$ that are linearly independent. Set $X_{n}:= {\rm
	span}\left\{\phi_{1},...,\phi_{n}\right\}$. Then, norms $\|\cdot\|_{L^\infty(\Omega)}, \|\cdot\|_{L^{p^-}(\Omega)}$ and $\|\cdot\|_{s,p}$ are equivalent on $X_n$ since $X_n$ is finitely dimensional. That is, there exist positive constants $C_{n,1}$ and $C_{n,2}$ such that
\begin{equation}\label{equi,norms}
C_{n,1}\|u\|_{L^\infty(\Omega)}\leq \|u\|_{s,p} \leq C_{n,2} \|u\|_{L^{p^-}(\Omega)},\quad \forall u\in X_n.
\end{equation}
By ($\textup{F}2$) and ($\textup{F}3$), there exists $t_3\in (0,t_2)$ such that
\begin{equation}\label{F1.last}
F(x,t)\geq \frac{2^{p^-+1}C_{n,2}^{p^-}}{p^-}|t|^{p^-},\quad \text{for a.e.}\ \ x\in\Omega\ \ \text{and all}\ \ |t|\leq t_3.
\end{equation}
Choose $r_n:=\min\left\{\frac{1}{2},t_3C_{n,1}\right\}.$ Then, for $u\in X_n$ with $\|u\|_{s,p}=r_n$ we have $|u|_{s,p}\leq 1$ and $\|u\|_{L^\infty(\Omega)}\leq t_3$ due to the relations \eqref{equivalent.norms} and \eqref{equi,norms} and hence, invoking Proposition~\ref{norm-modular2} and \eqref{F1.last} with noting $\widetilde{F}(x,u)=F(x,u)$ for $\|u\|_{L^\infty(\Omega)}\leq t_3$, we derive
\begin{align*}
\widetilde{E}(u)&\leq \frac{1}{p^-}|u|_{s,p}^{p^-}-\frac{2^{p^-+1}C_{n,2}^{p^-}}{p^-}\|u\|_{L^{p^-}(\Omega)}^{p^-}\\
&\leq -\frac{2^{p^-}}{p^-}\|u\|_{s,p}^{p^-}=-\frac{(2r_n)^{p^-}}{p^-},\quad \forall u\in X_n\cap S_{r_n}.
\end{align*}
We therefore obtain
	$$ \sup_{u\in X_{n}\cap S_{r_{n}}}{\widetilde{E}(u)}<0.$$
Thus, invoking Lemma \ref{lemma_last} we deduce a sequence $\{u_n\}\subset W_0^{s,p(\cdot,\cdot)}(\Omega)$ such that $\widetilde{E}'(u_n)=0$ for all $n\in\mathbb{N}$ and  $\widetilde{E}(u_n)\to 0$ as $n\to\infty.$ Since $\widetilde{E}$ satisfies the $(\textup{PS})$ condition, we can extract from $\{u_n\}$ a subsequence, still denote by $\{u_n\}$, such that $u_n\to \bar{u}$ in $W_0^{s,p(\cdot,\cdot)}(\Omega).$ Since $\widetilde{E}\in C^1\round{W_0^{s,p(\cdot,\cdot)}(\Omega),\mathbb{R}},$ we obtain
$$\widetilde{E}(\bar{u})=\langle \widetilde{E}'(\bar{u}),\bar{u}\rangle =0.$$
This and \eqref{F.tilde.nonegative1} yield
\begin{align*}
0\leq\int_{\mathbb{R}^N}\int_{\mathbb{R}^N}\left(\frac{1}{p^-}-\frac{1}{p(x,y)}\right)
\frac{|\bar{u}(x)-\bar{u}(y)|^{p(x,y)}}{|x-y|^{N+sp(x,y)}}
\,\diff x \diff y+\int_\Omega\left(\frac{1}{p^-}-\frac{1}{p(x)}\right)|\bar{u}|^{p(x)}\diff x\\
=\int_\Omega \left[{\widetilde F}(x,\bar{u}(x))-\frac{1}{p^-}{\widetilde f}(x,\bar{u}(x))\bar{u}(x)\right]\diff x\leq 0.
\end{align*}
From this and \eqref{F.tilde.nonegative2} we have
$$\int_{\mathbb{R}^N}\int_{\mathbb{R}^N}\left(\frac{1}{p^-}-\frac{1}{p(x,y)}\right)
\frac{|\bar{u}(x)-\bar{u}(y)|^{p(x,y)}}{|x-y|^{N+sp(x,y)}}
\,\diff x \diff y+\int_\Omega\left(\frac{1}{p^-}-\frac{1}{p(x)}\right)|\bar{u}|^{p(x)}\diff x=0,$$
and for a.e. $x\in\Omega,$ $\bar{u}(x)=0$ or $|\bar{u}(x)|\geq 2t_2.$ Thus, ${\widetilde F}(x,\bar{u}(x))=0$ or $\beta |\bar{u}(x)|^{p^-}$ and furthermore, for a.e. $x$\ \  satisfying $|\bar{u}(x)|\geq 2t_2$, we have $p(x)=p^-.$ From these facts, we have

\begin{align*}
0= \widetilde{E}(\bar{u})&\geq \int_\Omega\frac{1}{p(x)}|\bar{u}|^{p(x)}\diff x-\int_\Omega {\widetilde F}(x,\bar{u}(x))\diff x=\int_\Omega\frac{1}{p^-}|\bar{u}|^{p^-}\diff x-\int_\Omega {\widetilde F}(x,\bar{u}(x))\diff x\\
&\geq \int_\Omega\frac{1}{p^-}|\bar{u}|^{p^-}\diff x-\int_\Omega\beta|\bar{u}|^{p^-}\diff x,
\end{align*}
and hence, $\bar{u}=0$ since $\beta\in \round{0,\min\left\{\frac{1}{p^-},\frac{1}{p^+2^{p^-}C_{imb}^{p^-}}\right\}}.$ That is, we have derived $u_n\to 0$ in $W_0^{s,p(\cdot,\cdot)}(\Omega),$ and hence $\|u_n\|_{L^{p(\cdot)}(\Omega)}\to 0$ as $n\to\infty.$ Note that $\{u_n\}$ are weak solutions to the following probem
\begin{equation*}
\begin{cases}
(-\Delta)_{p(x)}^su=f_1(x,u) \quad \text{in} \quad \Omega, \\
u=0 \quad \text{in} \quad \Bbb R^N\setminus \Omega,
\end{cases}
\end{equation*}
where the nonlinear term $f_1(x,t):=\widetilde{f}(x,t)-|t|^{p(x)-2}u$ fulfills the condition $(\textup{F}1)$ with $q(x)=p(x)$ due to the relation in \eqref{est.tilde}.  It yields $\norm{u_{n}}_{L^{\infty}(\Omega)}\to 0$ in view of Theorem \ref{a-priori.bound}. Thus, $\norm{u_{n}}_{L^{\infty}(\Omega)}\le t_2$ for large $n$ amd hence, $\left\{u_{n}\right\}$ with $n$ large enough are weak solutions of problem \eqref{eq.existence}. The proof is complete.
\end{proof}

\section*{Acknowledgements}
The second author was supported by the Basic Science Research Program through the National Research Foundation of Korea (NRF) funded by the Ministry of Education (NRF-2016R1D1A1B03935866).


\begin{thebibliography}{00}

\bibitem{AL} C.O. Alves, S.B. Liu, On superlinear $p(x)$-Laplacian equations in $\Bbb R^{N}$, Nonlinear Anal. 73 (2010) 2566--2579.


\bibitem{Bahrouni.JMAA2018} A. Bahrouni,
Comparison and sub-supersolution principles for the fractional $p(x)$-Laplacian, J. Math. Anal. Appl. 458 (2018) 1363--1372.

\bibitem{Bahrouni.DCDS2018} A. Bahrouni, V. R\u{a}dulescu, On a new fractional Sobolev space and applications to nonlocal variational problems with variable exponent, Discrete Contin. Dyn. Syst. Ser. S 11 (3) (2018) 379--389.

\bibitem{ZBS15} Z. Binlin, G.M Bisci, R. Servadei, Superlinear nonlocal fractional problems with infinitely many solutions, Nonlinearity 28 (2015) 2247--2264.



\bibitem{CKK} E.B. Choi, J.-M. Kim, Y.-H. Kim, Infinitely many solutions for equations of $p(x)$-Laplace type with the nonlinear Neumann boundary condition, Proc. Roy. Soc. Edinburgh 148 (2018) 1--31.




\bibitem{DPV}
E.\ Di Nezza, G.\ Palatucci, E.\ Valdinoci,
\newblock Hitchhiker's guide to the fractional Sobolev spaces,
Bull. Sci. Math. 136 (2012) 521--573.


\bibitem{DHHR} L. Diening, P. Harjulehto, P. Hasto, M. Ruzicka, Lebesgue and
Sobolev spaces with variable exponents, Lecture Notes in Mathematics, vol. 2017, Springer-Verlag, Heidelberg, 2011.



\bibitem{Gu} Z. Guo, Elliptic equations with indefinite concave nonlinearities near the origin, J. Math. Anal. Appl. 367 (2010) 273--277.

\bibitem{HS2014} K. Ho, I. Sim,
Existence and some properties of solutions for  degenerate elliptic equations with exponent variable, Nonlinear Anal. 98 (2014) 146--164. Corrigendum to ``Existence and some properties of solutions for  degenerate elliptic equations with exponent variable''[Nonlinear Anal. 98 (2014) 146--164], Nonlinear Anal. 128 (2015) 423--426.





\bibitem{H} H. P. Heinz, Free Ljusternik-Schnirelman theory and the bifurcation diagrams of certain singular nonlinear problems, J. Differential Equations 66 (1987) 263--300.




\bibitem{Kaufmann} U. Kaufmann, J.D. Rossi, R. Vidal, Fractional Sobolev spaces with variable exponents and fractional $p(x)$-Laplacians, Electron. J. Qual. Theory Differ. Equ. 2017, Paper No. 76, 10 pp.

\bibitem{Kim} Y.-H. Kim, Infinitely many small energy solutions for equations involving
the fractional Laplacian in $\Bbb R^{N}$, J. Korean Math. Soc. 55 (2018) 1269--1283.

\bibitem{KKL}
J.-M. Kim, Y.-H. Kim, J. Lee, Existence and multiplicity of solutions for equations of $p(x)$-Laplace type in $\Bbb R^{N}$ without AR-condition, Differ. Integral Equ. 31 (2018) 435--464.

\bibitem{KKL19}
J. Lee, J.-M. Kim, Yun-Ho Kim, Existence and multiplicity of solutions for Kirchhoff-Schr\"{o}dinger type equations involving $p(x)$-Laplacian on the entire space $\Bbb R^{N}$, Nonlinear Anal. Real World Appl. 45 (2019) 620–-649.








\bibitem{N} D. Naimen, Existence of infinitely many solutions for nonlinear Neumann problems with indefinite coefficients,  Electron. J. Differential Equations 2014, Paper No. 181, 12pp.







\bibitem{FZ} X. Fan, D. Zhao,
On the spaces $L^{p(x)}(\Omega)$ and $W^{m,p(x)}(\Omega)$, J. Math.
Anal. Appl. 263 (2001)\ 424--446.






\bibitem{KZW} Y.-H. Kim, L. Wang, C. Zhang, Global bifurcation for a class of degenerate elliptic equations with variable exponents, J. Math. Anal. Appl. 371 (2010)\ 624--637.

\bibitem{KR} O. Kov\'{a}\v{c}ik, J. R\'{a}kosn\'{i}k,
On spaces $L^{p(x)}$ and $W^{k,p(x)}$, Czechoslovak Math. J. 41
(1991)\ 592--618.












\bibitem{TF} Z. Tan, F. Fang, On superlinear $p(x)$-Laplacian problems without Ambrosetti and Rabinowitz condition, Nonlinear Anal. 75 (2012)\ 3902--3915.



\bibitem{W0} Z.-Q. Wang, Nonlinear boundary value problems with concave nonlinearities near the origin, NoDEA Nonlinear Differential Equations Appl. 8 (2001) 15--33.

\bibitem{WZ} P. Winkert, R. Zacher, A priori bounds for weak solutions to elliptic equations with nonstandard growth, Discrete Contin. Dyn. Syst. Ser. S  5 (2012)\ 865--878.

\bibitem{VZ} V. Vergara, R. Zacher, A priori bounds for degenerate and singular evolutionary partial integro-differential equations, Nonlinear Anal. 73 (11) (2010)\ 3572–-3585.

\bibitem{Za} A. Zang, $p(x)$-Laplacian equations satisfying Cerami condition, J. Math. Anal. Appl. 337 (2008)\ 547-555.









\end{thebibliography}
\end{document}